\documentclass[11pt,reqno]{amsart}
\usepackage{fullpage}

\usepackage{graphicx}
\usepackage{hyperref}
\usepackage{amsmath,amsopn,amssymb,amsfonts,stmaryrd}
\usepackage{verbatim}
\usepackage{amsthm}
\usepackage{mathtools}
\usepackage{color}
\usepackage{enumitem}
\usepackage[framemethod=TikZ]{mdframed}
\usepackage{bbm}
\usepackage{mathrsfs}
\usepackage{booktabs}
\usepackage{caption}
\usepackage{bm}
\usepackage{tensor}

%Reals
%non-zero real
%positive real
%Complex
%non-zero complex
%Projective
\newcommand{\IZ}{{\mathbb Z}}%Integers
%Positive integers
%Negative integers
%non-zero integers
\newcommand{\IN}{{\mathbb N}}%Natural numbers
%Rationals
%non-zero rationals
%positive rationals
%Algebraic closure of the rationals
%Octonions
\newcommand{\IH}{{\mathbb H}}%quaternions
%finite field

\newcommand{\sgn}{\mbox{sgn}}

\newcommand{\SL}{\mathrm{SL}}

\newcommand{\zz}{\mathfrak{z}}

\newcommand{\N}{\mathbb N}
\newcommand{\C}{\mathbb C}

% orientifold planes and D-branes

%%%%%%%%%%%% MACROS  MACROS   MACROS %%%%%%%%%

\theoremstyle{plain}
\newtheorem{thm}{Theorem}[section]
\newtheorem{cor}[thm]{Corollary}
\newtheorem{lem}[thm]{Lemma}
\newtheorem{prop}[thm]{Proposition}

\theoremstyle{definition}

\newtheorem*{rem}{Remark}

\numberwithin{equation}{section}

\newcommand{\pmat}[1]{\left( \smallmatrix #1 \endsmallmatrix \right)}

\renewcommand{\sgn}{\textnormal{sgn}}

\def\lp{\left(}
\def\rp{\right)}
\def\lb{\left[}
\def\rb{\right]}

%%%%%%%  Greek letters %%%%%%%%%%%%%%%%%%
\def\a{\alpha}

\def\d{\delta}

\def\k{\kappa}

\def\w{\omega}

\def\z{\zeta}

\def\vth{\vartheta}
\def\ve{\varepsilon}
\def\e{\varepsilon}

\def\s{\sigma}

\def\t{\tau}

\def\a{\alpha}

\def\d{\delta}

\def\k{\kappa}

\def\w{\omega}

\def\z{\zeta}

\def\vth{\vartheta}
\def\ve{\varepsilon}
\def\e{\varepsilon}

\def\s{\sigma}

\def\t{\tau}

\def\DD{\Delta}
\def\LL{\Lambda}

\def\DD{\Delta}
\def\LL{\Lambda}

\def\del{  \partial}

\newcommand{\im}{{\rm Im}}

\renewcommand{\sgn}{{\rm sgn}}
\newcommand{\R}{\mathbb R}
\newcommand{\Z}{\mathbb Z}
\newcommand{\cS}{\mathcal{S}}
\newcommand{\vt}{\vartheta}

\def\wh{\widehat}

\setlist[itemize]{noitemsep, topsep=0pt}

\makeatletter
\newcommand{\vast}{\bBigg@{2}}
\newcommand{\Vast}{\bBigg@{5}}
\makeatother

\renewcommand{\pmod}[1]{\  \,  \left(  \mathrm{mod} \,  #1 \right)}

\newcommand{\rint}[2]{\tensor*[_{\ *\!\!\!}]{\int}{_{#1}^{#2}}}
\newcommand{\Res}{\operatorname{Res}}
\newcommand{\DDD}{\mathcal{D}}

\makeatletter
\newcommand{\nolisttopbreak}{\par\nobreak\@afterheading} 
\makeatother

\title{Integral Representations of Rank Two False Theta Functions and Their Modularity Properties}
\author{Kathrin Bringmann}
\address{University of Cologne, Department of Mathematics and Computer Science, Weyertal 86-90, 50931 Cologne, Germany}
\email{kbringma@math.uni-koeln.de}

\author{Jonas Kaszian}
\address{Max Planck Institute for Mathematics, Vivatsgasse 7, 53111 Bonn, Germany}
\email{jonask@mpim-bonn.mpg.de}

\author{Antun Milas}
\address{Department of Mathematics and Statistics, SUNY-Albany, Albany NY 12222, USA}
\email{amilas@albany.edu}

\author{Caner Nazaroglu}
\address{University of Cologne, Department of Mathematics and Computer Science, Weyertal 86-90, 50931 Cologne, Germany}
\email{cnazarog@math.uni-koeln.de}
\begin{document}

\begin{abstract}
False theta functions form a family of functions with intriguing modular properties and connections to mock modular forms. In this paper, we take the first step towards investigating modular transformations of higher rank false theta functions, following the example of higher depth mock modular forms.
In particular, we prove that under quite general conditions, a rank two false theta function is determined in terms of iterated, holomorphic, Eichler-type integrals. This provides a new method for examining their modular properties and we apply it in a variety of situations where rank two false theta functions arise. 
We first consider generic parafermion characters of vertex algebras of type $A_2$ and $B_2$. This requires a fairly non-trivial analysis of Fourier coefficients of meromorphic Jacobi forms of negative index, which is of independent interest. Then we discuss modularity of rank two false theta functions coming from superconformal Schur indices. Lastly, we analyze $\hat{Z}$-invariants of Gukov, Pei, Putrov, and Vafa for certain plumbing ${\tt H}$-graphs.
Along the way, our method clarifies previous results on depth two quantum modularity.
\end{abstract}

\maketitle

\section{Introduction and statement of results}

Modular forms and their variations provide a rich source of interaction between physics and mathematics. More recently, functions with more general forms of modular properties, such as mock modular forms, have gathered attention in both areas. In this paper, we focus on such a family of functions with generalized modularity properties called {\it false theta functions}. These are functions that are similar to ordinary theta functions on lattices with positive definite signature, except for certain extra sign functions, which prevent them from having the same simple modular properties as ordinary theta functions. For false theta functions over rank one lattices, one approach to understand them is by noting that they can be realized as holomorphic Eichler integrals of unary theta functions. This representation can be used to study the modular transformations of such functions and helps one understand why their limit to rational numbers yield quantum modular forms \cite{Zagier}. An alternative approach to modularity of false theta functions in 
\cite{CM1,CM2} is motivated by the concept of the $S$-matrix in conformal field theory. In this setup, false theta functions are  ``regularized" (defined  on $\mathbb{C} \times \mathbb{H}$, where $\mathbb{H}$ is the complex upper half-plane) and transform with integral kernels under the modular group. The $S$-kernel can be used to formulate a continuous version of the Verlinde formula \cite{CM1}. Yet another approach is to follow the example of mock modular forms and form a modular completion as done in \cite{BN}, where elliptic variables can also be naturally understood. The modular completion now depends on two complex variables in the upper half-plane $(\t,w) \in \mathbb{H} \times \mathbb{H}$, which transform in the same way under modular transformations,\footnote{A similar picture is obtained for mock modular forms by complexifying the complex conjugate of the modular variable $\tau$ so that we have a pair of complex variables $(\t,w)$ one living in the upper half-plane and one in the lower half-plane with both transforming in the same way under modular transformations.} and similar to mock modular forms, differentiating in  $w$ yields a modular form in $w$.

One of the main goals in this paper is to generalize the considerations from \cite{BN} to rank two false theta functions. As for rank one false theta functions, to study the modular transformations we follow the lead of higher depth mock modular forms, which were defined in unpublished work of Zagier and Zwegers and were recently developed through signature $(n,2)$ indefinite theta functions by \cite{ABMP}.\footnote{A notion that is similar to higher depth mock modular forms is that of polyharmonic Maass forms \cite{BDR, LR}. }
In particular, the double error functions introduced by \cite{ABMP} show how double products of sign functions can be replaced to give modular completions. In Lemma \ref{SignLemma} we give a particularly useful form to understand this fact in a shape suitable for our context. This result then suggests a notion of false theta functions at ``depth two", where we find a modular completion again depending on two complex variables $(\t,w) \in \mathbb{\IH} \times \mathbb{\IH} \setminus \{ \tau = w \}$ and where the derivative in $w$ leads to modular completions of the kind studied in \cite{BN}, which are at ``depth one".
More specifically, our result leads us to modular completions $\wh{f}(\t,w)$ which transform like modular forms under simultaneous modular transformations $(\tau, w) \mapsto ( \frac{a \tau + b}{c \tau + d}, \frac{a w+b}{c w+d} )$ for $\pmat{a & b \\ c & d} \in \mathrm{SL}_2 (\IZ)$ and reproduce the rank two false theta functions we are studying through the limit $\lim_{w \to \t + i \infty} \wh{f}(\t,w)$. Moreover, their  derivatives with respect to $w$ appear in the form
\begin{equation*}
\frac{\del \wh{f} (\tau,w)}{\del w} = \sum_j (i(w-\t))^{r_j} \, \wh{g}_j (\tau,w) \, h_j (w) ,
\end{equation*}
where $r_j \in \frac{\IZ}{2}$, $h_j$ is a weight $2+r_j$ modular form (with an appropriate multiplier system), and $\wh{g}_j (\tau,w)$ is a modular completion of the sort studied in \cite{BN}. This is a structure that closely resembles those of depth two mock modular forms. It would be interesting to elaborate on the details here and form an appropriate notion of ``higher depth false modular forms" by mirroring the structure of higher depth mock modular forms. We leave this problem as future work and restrict our attention to answering concrete modularity questions about rank two false theta functions arising in a variety of mathematical fields.

A rich source of false theta functions that is studied in this paper
is through the Fourier coefficients of meromorphic Jacobi forms with negative index or their multivariable generalizations \cite{BKMZ, BRZ}.\footnote{
Here and in the rest of this paper, whenever we say Fourier coefficients of (meromorphic) Jacobi forms, we mean Fourier coefficients with respect to the elliptic variables.
}
Such meromorphic Jacobi forms naturally arise in representation theory of affine Lie algebras and in conformal field theory. 
In vertex algebra theory, important examples of meromorphic Jacobi forms come from characters of irreducible modules for the simple affine vertex operator algebra $V_k(\mathfrak{g})$ at an admissible level $k$. At a boundary admissible level \cite{KW0}, these characters admit particularly elegant infinite product form. Modular properties of their Fourier coefficients are understood only for $\mathfrak{g}=\text{sl}_2$ and $V_{-\frac32}(\text{sl}_3)$. For the latter, the Fourier coefficients are essentially rank two false theta functions (see  \cite{BKMZ} for more details). On the very extreme, if the level is generic, the character of $V_k(\mathfrak{g})$ is given by
\begin{equation} \label{paraf}
{\rm ch}[V_k(\mathfrak{g})]({\bm \zeta};q)]=\frac{q^{-\frac{{\rm dim}(g)k}{24\left(k+h^\vee\right)}}}{(q;q)_\infty^n \prod_{\alpha \in \Delta_+} ({\bm \zeta}^{\alpha}q;q)_\infty \prod_{\alpha \in \Delta_+} ({\bm \zeta}^{-\alpha}q;q)_\infty},
\end{equation}
where $n$ is the rank of $\mathfrak{g}$, $h^\vee$ the dual Coxeter number, and as usual, $(a;q)_r := \prod_{j=0}^{r-1}(1-aq^j)$ for $r \in \mathbb{N}_0 \cup \{ \infty \}$. Moreover $\bm \zeta$ are variables parametrizing the set of positive roots $\Delta_+$  of $\mathfrak{g}$ 
and throughout this paper we use bold letters to denote vectors.
Although (\ref{paraf}) is not a Jacobi form, a slight modification in the Weyl denominator gives a genuine Jacobi form 
of negative index. The Fourier coefficients of (\ref{paraf}) are important because they are essentially 
characters for the  parafermion vertex algebra $N_k(\mathfrak{g})$ \cite{DL,DW,KP} (see also Section 5), whose character is given by
\begin{equation} \label{par_ct}
(q;q)_\infty^n {\rm CT}_{[\bm \zeta]} \left( {\rm ch}[V_k(\mathfrak{g})]({\bm \zeta};q)\right),
\end{equation}
where $\mathrm{CT}_{[\bm\z]} $ denotes the constant term in the expansion in $\zeta_j$.
The character can be expressed as linear combinations of coefficients of Jacobi forms. 
One of the goals of this paper is to investigate modular properties of (\ref{par_ct}) for types $A_2$ and $B_2$, which leads us to the following result.
\begin{thm}\label{thm:parafermion_modularity}
Characters of the parafermion vertex algebras of type $A_2$ and $B_2$ can be written as linear combinations of (quasi-)modular forms and  false theta functions of rank one and two. The rank two pieces in these decompositions can be written as iterated holomorphic Eichler-type integrals, which yields the modular transformation properties of these functions.
\end{thm}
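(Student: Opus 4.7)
The plan is to realize the character (\ref{par_ct}) as the constant term, in the elliptic variables $\bm\zeta$, of a meromorphic Jacobi form of negative index obtained from (\ref{paraf}) by multiplying with the Weyl denominator $\prod_{\alpha\in\Delta_+}(\bm\zeta^{\alpha/2}-\bm\zeta^{-\alpha/2})$ (up to a $q$-power). For the rank two cases $\mathfrak{g}=A_2$ and $B_2$, I would write out these two-variable meromorphic Jacobi forms explicitly using the standard product-side expressions for $\mathrm{ch}[V_k(\mathfrak{g})]$ and then extract the constant term by an iterated residue / partial fraction analysis, taking residues in one $\zeta_j$ at a time along the hyperplane arrangement $\bm\zeta^\alpha = q^n$ for $\alpha \in \Delta_+$.

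At each stage of the residue calculation I would split the contribution into a \emph{finite} part coming from the holomorphic expansion and a \emph{polar} part coming from the simple poles along these hyperplanes. Summing the polar contributions across the Weyl group orbit reorganizes them into ordinary theta series and their derivatives, together with rank one false theta functions of the form treated in \cite{BN}; combined with $\eta$-powers coming from $(q;q)_\infty$ these yield the claimed (quasi-)modular and rank one false pieces. The finite part, after expanding all remaining geometric series and collecting powers of $q$, produces an indefinite lattice sum on a rank two positive definite lattice decorated with a product of two sign functions — this is precisely the rank two false theta function to be analyzed.

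To access the modular behavior of these rank two pieces, I would apply Lemma \ref{SignLemma} to rewrite each product of two sign functions as a double error function kernel modulo controlled integrals of simpler theta kernels. The double error function yields a modular completion $\wh{f}(\tau,w)$ on $\mathbb{H}\times\mathbb{H}\setminus\{\tau=w\}$ that transforms under simultaneous $\mathrm{SL}_2(\mathbb{Z})$ action on $(\tau,w)$ and satisfies an identity of the form
\begin{equation*}
\frac{\partial \wh{f}(\tau,w)}{\partial w} = \sum_j (i(w-\tau))^{r_j}\,\wh{g}_j(\tau,w)\,h_j(w),
\end{equation*}
with $h_j$ a modular form of weight $2+r_j$ and $\wh{g}_j$ a depth one completion in the sense of \cite{BN}. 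Integrating this relation in $w$ along a vertical contour and then sending $w\to\tau+i\infty$ reconstructs the rank two false theta function as an iterated holomorphic Eichler-type integral, and the simultaneous $\mathrm{SL}_2(\mathbb{Z})$ transformation of $\wh{f}$ translates, after this limit, into the modular transformation of the false theta function itself in terms of period-integral error terms over the modular images of the $h_j$ and $\wh{g}_j$.

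The main obstacle will be the constant term extraction: for $B_2$ the long root $\alpha_1+2\alpha_2$ produces an additional family of poles and an extra layer in the partial fraction decomposition, so the residue analysis is substantially more delicate than in $A_2$, and considerable care is needed to organize boundary terms so that the surviving indefinite sums have the correct signature for Lemma \ref{SignLemma} to apply. A secondary difficulty is cleanly separating the quasi-modular and depth one contributions from the genuine rank two pieces, since coincident or colliding poles in the residue calculation tend to manufacture Eisenstein-like quasi-modular terms that must be disentangled from the rank two false theta output before the iterated Eichler integral representation is invoked.
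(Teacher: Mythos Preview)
Your proposal is correct and follows essentially the same route as the paper: extract the constant term via the partial-fraction identities of Section~\ref{sec:meromorphic_Jacobi_decomp} (and their confluent limits for the higher-order poles in the $B_2$ case, which is where the $E_2$ quasi-modular pieces you anticipate actually arise), shift summation indices over the set $\mathcal S_{A}$ resp.\ $\mathcal S_{B}$ to separate one-dimensional boundary terms from the genuine rank-two false theta, and then invoke Lemma~\ref{SignLemma}. One small clarification: Lemma~\ref{SignLemma} produces the iterated Eichler integral directly rather than a double error function kernel, and the paper defines the completion $\wh f(\tau,w)$ simply by truncating the outer integral at $w$ and proves its modular covariance by a direct change of variables along hyperbolic geodesics---this is a bit more economical than constructing $\wh f$ via the $\partial_w$-identity and integrating back as you sketch.
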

Note that more precise versions of this result are given in Propositions \ref{prop_const_term_A2}, \ref{prop:Psi_integral_form}, \ref{prop:Psi_modular_properties}, \ref{prop_const_term_B2}, \ref{prop_phi_integral_form}, and \ref{prop:Phi_modular_properties}. 
Independent of modular properties, we expect that the analysis we make on the characters ${\rm ch}[V_k(\mathfrak{g})]$ in these two cases will also shed some light on the nature of coefficients of meromorphic, multivariable Jacobi forms of negative definite index. We furthermore hope that our techniques can be extended to study parafermionic characters at boundary admissible levels.

Meromorphic Jacobi forms closely related to characters of affine Lie algebras at boundary admissible levels also show up in the computation of the Schur index $\mathcal{I}(q)$ of 4d $\mathcal{N}=2$ superconformal field theories (SCFTs) \cite{Beem, BNi}. If refined by flavor symmetries, the Schur index is denoted by $\mathcal{I}(q,z_1,..,z_n)$. In this paper, we are only interested in the Schur index of some specific SCFTs, called Argyres--Douglas theories of type $(A_1,D_{2k+2})$, whose index with two flavors was first computed in \cite{BNi} (see also \cite{CS}) and later identified with certain vertex algebra  characters in \cite{Creutzig0}. 
In particular, for $k=1$ the index coincides with the character of the aforementioned vertex algebra $V_{-\frac32}(\rm{sl}_3)$.
Our second main result deals with modularity of Fourier coefficients of these indices; for a more precise statement see Section \ref{sec:Schur_Zhat}.
\begin{thm}\label{thm:schur_index_modularity}
The Fourier coefficients of the Schur indices of Argyres--Douglas theories of type $(A_1,D_{2k+2})$ are essentially rank two false theta functions. Moreover, the constant terms in these Fourier expansions can be expressed as double Eichler-type integrals.
\end{thm}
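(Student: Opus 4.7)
The plan is to start from the explicit formula for the refined Schur index $\mathcal{I}(q,z_1,z_2)$ of the $(A_1,D_{2k+2})$ Argyres--Douglas theory as derived in \cite{BNi,CS} and reinterpreted via vertex algebras in \cite{Creutzig0}. These indices admit an infinite product representation closely mirroring that of affine characters at boundary admissible levels, so they present themselves as meromorphic Jacobi-type forms of negative index in the two elliptic variables $z_1,z_2$. The first step is to fix a convenient wedge in the $(z_1,z_2)$-plane and expand $\mathcal{I}$ as a Fourier series in $\bm\zeta = (e^{2\pi i z_1}, e^{2\pi i z_2})$.

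Next I would extract the Fourier coefficients by the standard partial-fraction/residue procedure applied to meromorphic Jacobi forms (following the approach used in the parafermionic analysis earlier in the paper). Because the index is negative, this procedure produces contributions from a double family of simple poles, and the resulting coefficient of a generic Fourier mode is a double sum over a rank two lattice of the form $\sum_{\bm n \in \IZ^2} \sgn(\ell_1(\bm n)) \sgn(\ell_2(\bm n))\, q^{Q(\bm n)+\text{linear}}$ with $Q$ a positive definite quadratic form and $\ell_1,\ell_2$ linear. This is exactly the structure of a rank two false theta function, establishing the first assertion of the theorem. The constant term (the $\bm\zeta^{\bm 0}$ coefficient) is then treated as a distinguished special case of the general Fourier coefficient.

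For the second assertion, I would feed this constant term into the sign lemma (Lemma \ref{SignLemma}) that was established in the earlier sections, which rewrites the product $\sgn(\ell_1)\sgn(\ell_2)$ as a suitable iterated integral transform of a theta-like kernel. After interchanging summation and integration, the inner sum collapses to a unary theta function, and the outer integration layer converts the remaining sum into a second holomorphic Eichler-type integral. The upshot is a presentation
\begin{equation*}
\mathrm{CT}_{[\bm\zeta]}\bigl(\mathcal{I}(q,z_1,z_2)\bigr) = \text{(quasi-)modular piece} + \int^{i\infty}_{\tau} \int^{i\infty}_{\tau_1} \frac{\vartheta_1(\tau_1,\tau_2)\,\vartheta_2(\tau_2)}{\sqrt{-i(\tau_1-\tau)}\sqrt{-i(\tau_2-\tau_1)}}\, d\tau_2\, d\tau_1,
\end{equation*}
with explicit unary theta kernels $\vartheta_1,\vartheta_2$ depending on $k$, matching the ``double Eichler-type integral'' language of the theorem. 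The modular completion $\widehat{f}(\tau,w)$ and its $w$-derivative formula highlighted in the introduction then deliver the modular behavior under simultaneous $\mathrm{SL}_2(\IZ)$ action on $(\tau,w)$.

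The main obstacle will be the bookkeeping in the residue extraction: for general $k$ the denominator of $\mathcal{I}$ has $k$-dependent pole structure, and one has to identify the correct wedge decomposition so that only two sign functions (rather than more) appear in each contribution, ensuring the resulting object sits squarely within the rank two framework rather than a genuinely higher rank one. A secondary difficulty is matching the quadratic form $Q$ emerging from the residue calculation with the quadratic form to which the sign lemma is applied; this requires diagonalizing $Q$ and tracking linear shifts so that the hypotheses of Lemma \ref{SignLemma} are met verbatim. Once these structural matches are secured, the passage to the double Eichler integral and the modular transformation law follow the same template as for the parafermionic characters in Theorem \ref{thm:parafermion_modularity}.
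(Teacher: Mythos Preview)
Your overall strategy is correct and matches the paper's: extract the Fourier coefficients as rank two false theta functions, then feed the constant term into Lemma~\ref{SignLemma}. However, two points deserve comment.

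First, the paper does \emph{not} use the residue/partial-fraction machinery of Section~\ref{sec:meromorphic_Jacobi_decomp} here. Instead it invokes a direct expansion identity from \cite[Lemma 3.5]{BKMZ}: writing the index (up to eta quotients) as $\mathbb{T}_k(\bm z;\tau)$, one factors off $\frac{-i\zeta_1^{-1/2}\eta(\tau)^3}{\vartheta(z_1;\tau)}$ and an auxiliary piece $h(\bm z;\tau)$, each of which has a known double-sum expansion with $\varrho_{m,n}=\tfrac12(\sgn(m+\tfrac12)+\sgn(n+\tfrac12))$ coefficients. Multiplying these expansions immediately gives all Fourier coefficients as rank two false thetas, and the constant term as $\mathbb{F}_k(\tau)=\tfrac12\sum_{\bm n\in\Z^2+(0,\frac12)}(-1)^{n_1}\sgn(n_1)\sgn(n_2)\,q^{\frac{n_1^2}{2}+n_1n_2+(k+1)n_2^2}$. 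Your residue approach would also work but is considerably more laborious; the paper's shortcut avoids the wedge bookkeeping you flag as the main obstacle.

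Second, and more substantively, the double integral you display is not what Lemma~\ref{SignLemma} actually produces. That lemma yields integrals of the form
\[
\int_\tau^{\tau+i\infty}\frac{(\cdots)}{\sqrt{i(w_1-\tau)}}\int_\tau^{w_1}\frac{(\cdots)}{\sqrt{i(w_2-\tau)}}\,dw_2\,dw_1,
\]
with both square roots anchored at $\tau$ and the inner integral running from $\tau$ to $w_1$. Your kernel $\sqrt{-i(\tau_2-\tau_1)}$ with inner limits $[\tau_1,i\infty]$ is the shape of a classical iterated period integral for depth two \emph{mock} modular forms, not the form used here. If you apply Lemma~\ref{SignLemma} as stated and sum, you get the paper's Proposition~\ref{prop:Fk_iterated_integral}, with unary theta kernels $\eta((2k+1)w_1)^3$, $\eta(w_2)^3$ and $\vth^{[1]}_{k+1,j}$; there is also no separate ``(quasi-)modular piece'' in this case, since the $(-1)^{n_1}$ twist kills the $\Theta(\tau)$ term from Proposition~\ref{prop:generic_falsetht_sign_lemma}.
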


The third main result concerns the $\hat{Z}$-invariants, called homological blocks, of plumbed $3$-invariants introduced recently by  Gukov, Pei, Putrov, and Vafa \cite{Gukov} and further studied from several viewpoints in \cite{BMM, CCFGH, EGGJPS, GM, Gukov, Kuc, Park}. For Seifert homology spheres, it is well-known that they can be expressed as linear combinations of derivatives of unary false theta functions, whose modular properties are known. Further computations of $\hat{Z}$-invariants for certain
non-Seifert integral homology spheres were given in \cite{BMM}. Our next result is an integral representation of these invariants. Compared to \cite{BMM}, Theorem \ref{thm:z_invariant_representation} gives a more direct relationship between iterated Eichler integrals and $\hat{Z}$-invariants. 
\begin{thm}
\label{thm:z_invariant_representation} Let $M$ be a plumbed $3$-manifold obtained from a unimodular ${\tt H}$ graph as in \cite{BMM}. Then the $\hat{Z}$-invariant of $M$ has a representation of the shape
\begin{equation*}
\hat{Z}(\tau)=\int_{\tau}^{\tau+i\infty}  \int_{\tau}^{w_1} \frac{\Theta_1(w_1,w_2)}{ \sqrt{i (w_1-\tau)} \sqrt{i (w_2-\tau)}}d w_2 d w_1+ \Theta_2(\tau),
\end{equation*}
where $\Theta_1(w_1,w_2)$ is a linear combination of products of derivatives of unary theta functions in $w_1$ and $w_2$ and $\Theta_2(\tau)$ is a rank two 
theta function. Moreover, there is a completion of $\hat{Z}$ which transforms like a weight one modular form.\footnote{
In this paper, we employ ``hats" to denote modular completions as is common in the literature for mock modular forms. This should not be confused with the hat that appears in $\hat{Z}$ for homological blocks, which is also a standard notation in literature. }
\end{thm}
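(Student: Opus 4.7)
The plan is to start from the explicit false theta series representation of $\hat{Z}(\tau)$ for unimodular $\tt H$-graphs established in \cite{BMM}. There, $\hat{Z}$ is written as a rank two false theta function, i.e.\ a sum over a rank two lattice whose summand contains a Gaussian $q$-power twisted by a product of two sign functions $\sgn(\ell_1(\vc n))\sgn(\ell_2(\vc n))$ for suitable linear forms $\ell_1,\ell_2$ determined by the plumbing data. The goal is to convert this sign-twisted sum into the promised iterated Eichler-type integral.

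First, I would split the sum according to whether each sign argument vanishes. Terms where both $\ell_1(\vc n)$ and $\ell_2(\vc n)$ are nonzero give the generic rank two false theta piece; terms where one or both vanish contribute a definite rank two (or rank one) theta series, which I would collect into $\Theta_2(\tau)$. Next I would invoke the sign-to-integral identity (the analogue, used in the rank one case, writing $\sgn(n)q^{n^2}$ as a holomorphic Eichler integral of a unary theta of weight $3/2$) in an iterated form, applying it once per sign factor. Concretely, each $\sgn(\ell_j(\vc n))$ will be rewritten as
\begin{equation*}
\sgn(\ell_j(\vc n)) \;=\; c_j\int_{\tau}^{\tau+i\infty}\frac{e^{2\pi i \ell_j(\vc n)^2 w_j/N}}{\sqrt{i(w_j-\tau)}}\,dw_j \;+\;(\text{boundary/diagonal contribution}),
\end{equation*}
for suitable normalizations; substituting both representations and interchanging sum and integrals (justified by absolute convergence after shifting contours into $\mathbb{H}$) collapses the inner $\vc n$-sum into products of derivatives of unary Jacobi theta functions, evaluated at $w_1$ and $w_2$. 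Derivatives appear because the linear forms $\ell_j$ introduce factors of the summation index; the resulting combination is what I call $\Theta_1(w_1,w_2)$. After symmetrizing the integration region using $\sgn(\ell_1)\sgn(\ell_2)$'s antisymmetry to reduce to $\tau<w_2<w_1<\tau+i\infty$, one arrives at the iterated integral displayed in the theorem, with the residual diagonal terms absorbed into $\Theta_2(\tau)$.

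For the modular completion, I would follow the template used for rank one false theta functions in \cite{BN} and for higher depth mock modular forms in \cite{ABMP}: replace the endpoint $\tau$ in the denominators $\sqrt{i(w_j-\tau)}$ by an independent variable $w\in\mathbb{H}$ and define $\widehat{Z}(\tau,w)$ by the corresponding double integral from $w$ to $w+i\infty$ (suitably complemented by $\Theta_2$). Since $\Theta_1(w_1,w_2)$ is a product of weight $3/2$ theta derivatives, each factor $\Theta_1/\sqrt{i(w_j-w)}$ transforms like a weight $1/2$ object in $w_j$ relative to $w$, and the simultaneous $\mathrm{SL}_2(\IZ)$-action on $(\tau,w)$ leaves the iterated integral invariant up to the expected automorphy factor of weight one. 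Taking $w\to\tau+i\infty$ recovers $\hat{Z}(\tau)$, proving the completion statement.

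The main obstacle I anticipate is the bookkeeping at the boundary of the summation region: one must carefully identify which terms in the naive sign-lemma substitution genuinely descend to the iterated integral and which must be separated off into $\Theta_2$, and then verify that this $\Theta_2$ is indeed a \emph{honest} (definite) rank two theta function rather than another false theta. This requires using the unimodularity of the $\tt H$-graph to ensure the induced quadratic form on the sublattice $\{\vc n:\ell_j(\vc n)=0\}$ is positive definite, so that the leftover pieces assemble into classical theta series. A secondary technical point is justifying the interchange of the $\vc n$-summation with the two nested $w_j$-integrals, which I would handle by first working with the regularized completion $\widehat{Z}(\tau,w)$ for $w$ strictly in the interior of $\mathbb{H}$ and then taking the limit $w\to\tau+i\infty$.
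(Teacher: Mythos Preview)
Your proposal contains a genuine gap at the core step. You propose to rewrite each factor $\sgn(\ell_j(\bm n))$ separately via the rank-one Eichler identity and then multiply. This works only when the two sign directions $\ell_1,\ell_2$ are orthogonal with respect to the quadratic form $Q$, so that $Q(\bm n)=\frac{\ell_1^2+\ell_2^2}{2}$ and the product of the two rank-one integrals reproduces the Gaussian. For the ${\tt H}$-graph forms $Q(\bm n)=\sigma_1 n_1^2+2\sigma_2 n_1 n_2+\sigma_3 n_2^2$ with $\sigma_2\neq 0$, the vectors along $n_1$ and $n_2$ are \emph{not} $Q$-orthogonal: after diagonalizing one has $\sgn(n_1)\sgn(n_2)=\sgn(\ell_1)\sgn(\ell_2+\kappa\ell_1)$ with $\kappa=-\sigma_2/\sqrt{\sigma_1\sigma_3-\sigma_2^2}\neq 0$. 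The paper's key Lemma~\ref{SignLemma} shows that in this non-orthogonal situation the product of signs equals a \emph{sum of two} iterated integrals (one for each of the two orthogonal splittings, with integrands $(\ell_1,\ell_2)$ and $(m_1,m_2)$) plus an $\arctan(\kappa)$ correction term. Your single product of rank-one integrals would produce only one of these iterated integrals and miss both the second one and the $\arctan$ term; the resulting $\Theta_1$ would be wrong and the modular transformation would not close up.

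Two further points follow from this. First, $\Theta_2(\tau)$ does not arise from the locus where some $\ell_j(\bm n)=0$ (those terms vanish since $\sgn(0)=0$); rather, it comes from the $\sgn(n_1)^2=1$ piece obtained by symmetrizing the $\IN_0^2$-sum from \cite{BMM} into a $\IZ^2$-sum, together with the $\arctan$-weighted theta function produced by Lemma~\ref{SignLemma} (see Proposition~\ref{lem:F}). Second, your description of the completion is inverted: one keeps $\tau$ in the denominators $\sqrt{i(w_j-\tau)}$ and replaces only the upper integration limit $\tau+i\infty$ by the second variable $w\in\IH$ (cf.\ the definition of $\wh\Psi(\tau,w)$ in Section~\ref{sec:A_2_parafermion}). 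Replacing $\tau$ in the denominators by $w$ as you suggest would leave no $\tau$-dependence in the integral at all.
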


Importantly, Theorems \ref{thm:parafermion_modularity}, \ref{thm:schur_index_modularity}, and \ref{thm:z_invariant_representation} completely determine the modular properties of the functions under investigation. These results in turn pave the way for studying ``precision asymptotics" for the relevant functions within all the contexts stated above, i.e.,~characters of parafermionic algebras, supersymmetric Schur indices, and homological invariants of $3$-manifolds. In the case of classical modular forms, this is accomplished by studying Poincar\'e series and by using the Circle Method. The most classical example is the exact formula for the integer partition function found by Rademacher \cite{Rademacher}, whose convergent formula extended the asymptotic results of Hardy and Ramanujan significantly. In fact, such results are intimately related to the finite-dimensionality of the associated vector spaces of modular objects and this property forms the basis for many of the remarkable applications of modularity to different fields of mathematics.
The Circle Method has already been applied to a case involving rank one false theta functions in \cite{BN} and to one involving depth two mock modular forms in \cite{BN2}. It would be interesting to extend these results to the class of functions studied in this paper and explore the implications to the different fields considered here.

Finally, the outline of the paper is as follows:
In Section \ref{sec:preliminaries}, we gather several facts on certain classical modular forms, Jacobi theta functions, and a number of meromorphic Jacobi forms of two complex variables used in the paper. 
In Section \ref{sec:sign_lemma}, we prove Lemma \ref{SignLemma}, which is the main technical tool used to study rank two false theta functions as we demonstrate in the rest of the section.
Then in Section \ref{sec:meromorphic_Jacobi_decomp}, we collect several technical results used in studying Fourier coefficients of meromorphic Jacobi forms. In Section \ref{sec:A_2_parafermion}, we turn our attention to parafermionic characters of type $A_2$ and show that one can write them in terms of modular forms and a rank two false theta function. We then find the modular transformations of the rank two piece using tools from Section \ref{sec:sign_lemma}. In Section \ref{sec:B_2_parafermion}, we apply the same type of analysis on generic parafermionic characters of type $B_2$. In Section \ref{sec:Schur_Zhat}, we demonstrate how the tools used in this paper also applies to rank two false theta functions coming from superconformal Schur indices and $\hat{Z}$-invariants of $3$-manifolds. We conclude in Section \ref{sec:conclusion} with final remarks and comments on future prospects.

\section*{Acknowledgments}
The authors thank Sander Zwegers for fruitful discussions. The research of the first author is supported by the Alfried Krupp Prize for Young University Teachers of the Krupp foundation and has received funding from the European Research Council (ERC) under the European Union's Horizon 2020 research and innovation programme (grant agreement No. 101001179). The third author was partially supported by the NSF grant DMS-1601070 and a Simons Collaboration Grant for Mathematicians. The research of the fourth author is  supported by the SFB/TRR 191 ``Symplectic Structures in Geometry, Algebra and Dynamics", funded by the DFG (Projektnummer 281071066 TRR 191). Finally, we thank the referees for providing useful comments.

\section{Preliminaries}\label{sec:preliminaries}

We start by recalling several functions which we require in this paper. Firstly, let 
\begin{equation*}
\eta(\tau):=q^{\frac{1}{24}}\prod_{n=1}^\infty\left(1-q^n\right)
\end{equation*}
be \emph{Dedekind's $\eta$-function}, where $q:=e^{2\pi i \tau}$. It satisfies the modular transformations
\begin{equation*}
\eta(\tau+1)=e^{\frac{\pi i}{12}}\eta(\tau),\qquad
\eta\left(-\frac 1\tau\right)=\sqrt{-i\tau}\eta(\tau).
\end{equation*}
Note that these two transformations imply that for $M = \pmat{a & b \\ c & d} \in \SL_2 (\IZ)$ we have
\begin{equation*}
\eta \lp \frac{a \t+b}{c \t+d} \rp = \nu_\eta (M)  (c \t + d)^{\frac{1}{2}} \eta (\t),
\end{equation*}
where $\nu_\eta$ denotes the multiplier system for the $\eta$-function. We furthermore use the identity
\begin{equation*}\label{E3}
\eta(\tau)^3=\sum_{n\in \IZ} (-1)^n \lp n+\frac{1}{2} \rp q^{\frac{1}{2} \lp n+\frac{1}{2} \rp^2}.
\end{equation*}

We also require the \emph{Jacobi theta function} defined by ($\zeta:=e^{2\pi i z}$)
\begin{equation*}
\vartheta(z;\tau):=\sum_{n \in \Z+\frac{1}{2}}e^{\pi i n}q^{n^2}\zeta^n.
\end{equation*}
By the Jacobi triple product formula, we have the product expansion \begin{equation}\label{JT}
\vt(z;\tau)=-iq^{\frac 18}\zeta^{-\frac 12}(q;q)_\infty(\zeta;q)_\infty\left(\zeta^{-1}q;q\right)_\infty.
\end{equation}
The Jacobi theta function transforms like a Jacobi form of weight and index $\frac 12$: 
\begin{align}
\label{thetamod} \vt(z;\tau+1)&=e^{\frac{\pi i}{4}}\vt(z;\tau),\qquad
\vt\left(\frac z\tau;-\frac 1\tau \right) =-i\sqrt{-i\tau}e^{\frac{\pi i z^2}{\tau}}\vt(z;\tau), \\ \label{JE}
\vt(z+1;\tau)&=-\vt(z;\tau), \qquad \vt(z+\tau;\tau)=-q^{-\frac{1}{2}} \zeta^{-1}\vt(z;\tau).
\end{align}
\newline 
Moreover, we have
\begin{equation}\label{diffT}
\left[\frac{\partial}{\partial z}\vt(z;\tau)
\right]_{z=0}=-2\pi\eta(\tau)^3.
\end{equation}

We also need the unary theta functions
\begin{equation*}
\vth_{m, r} (z;\t) := \begin{cases} \sum\limits_{n \in \IZ + \frac{r}{2m}} q^{m n^2} \zeta^{2 m n} & \text{ if } m \in \Z,\\
\sum\limits_{n \in \IZ + \frac{r}{2m}+\frac 12} (-1)^{n - \frac{r+m}{2m}} q^{m n^2} \zeta^{2 m n} & \text{ if } m \in \Z +\frac{1}{2} . \end{cases}
\end{equation*}
They satisfy the following elliptic and modular transformations.
\begin{lem} \label{lem:theta_onedim_modular_transf}
\hfill \nolisttopbreak
\begin{enumerate}[label = \rm(\arabic*), itemindent=-0.7cm]
\item For $m \in \IZ$ and $r  \in \IZ / 2 m \IZ$, we have:
%\begin{equation*}
%\vth_{m, r} (z + n_1 \t + n_2;\t) = q^{-m n_1^2} e^{-4\pi i m n_1 z} \vth_{m, r} (z;\t) 
%\qquad \mbox{for } n_1,n_2 \in \IZ.
%\end{equation*}
\begin{align*}
\vth_{m, r} (z; \t+1) &= e^{\frac{\pi i r^2}{2m}} \vth_{m, r} (z; \t),\\
\vth_{m, r} \lp \frac{z}{\t}; -\frac{1}{\t} \rp &= e^{ \frac{2\pi i m z^2}{\t}} 
\frac{\sqrt{-i\t}}{\sqrt{2m}} \sum_{\ell \pmod{2m}} e^{-\frac{\pi i r \ell}{m}} \vth_{m, \ell} (z; \t) .
\end{align*}

\item For $m \in \IZ+\frac{1}{2}$ and $r  \in \IZ / 2 m \IZ$, we have:
\begin{align*}
\vth_{m, r} (z; \t+1) &= e^{\frac{\pi i (r+m)^2}{2m}} \vth_{m, r} (z; \t),\\
\vth_{m, r} \lp \frac{z}{\t}; -\frac{1}{\t} \rp &=  e^{ \frac{2\pi i m z^2}{\t}} 
\frac{e^{-\pi i m}\sqrt{-i\t}}{\sqrt{2m}} 
\sum_{\ell \pmod{2m}} (-1)^{r+\ell} e^{-\frac{\pi i r \ell}{m}} \vth_{m, \ell} (z; \t) .
\end{align*}
\end{enumerate}
\end{lem}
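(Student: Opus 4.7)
The plan is to prove the $T$-transformation ($\tau \mapsto \tau+1$) by direct substitution and the $S$-transformation ($\tau \mapsto -1/\tau$, $z \mapsto z/\tau$) by Poisson summation, treating the integer and half-integer cases in parallel.

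For the $T$-transformation in case (1), parametrize $n = k + \frac{r}{2m}$ with $k \in \IZ$ and expand $mn^2 = mk^2 + rk + \frac{r^2}{4m}$. Since $mk^2 + rk \in \IZ$, only $\frac{r^2}{4m}$ contributes modulo $\IZ$, producing the phase $e^{\pi i r^2/(2m)}$. In case (2), the parametrization $n = k + \frac{r}{2m} + \frac{1}{2}$ makes the sign $(-1)^{n-(r+m)/(2m)}$ equal to $(-1)^k$, which is $T$-invariant. Expanding $mn^2 = mk(k+1) + kr + \frac{r^2}{4m} + \frac{r}{2} + \frac{m}{4}$ and using that $mk(k+1) \in \IZ$ (since $2m$ is an odd integer and $\frac{k(k+1)}{2} \in \IZ$), the nontrivial contribution under $\tau \mapsto \tau+1$ is $e^{\pi i (r^2/(2m) + r + m/2)}$, which a direct check shows equals $e^{\pi i (r+m)^2/(2m)}$.

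For the $S$-transformation, I apply Poisson summation $\sum_{k \in \IZ} f(k+\alpha) = \sum_{\ell \in \IZ} \widehat{f}(\ell)\, e^{2\pi i \alpha \ell}$ to the Gaussian-type function $f(x) = \exp\!\lp -\frac{2\pi i m x^2}{\tau} + \frac{4\pi i m x z}{\tau} \rp$. Completing the square in the defining integral and invoking $\int_\IR e^{-\pi a u^2}\,du = 1/\sqrt{a}$ with $a = 2im/\tau$ (convergent for $\tau \in \IH$) gives
\begin{equation*}
\widehat{f}(\ell) = \frac{\sqrt{-i\tau}}{\sqrt{2m}}\, \exp\!\lp \frac{2\pi i m z^2}{\tau} - 2\pi i \ell z + \frac{\pi i \tau \ell^2}{2m} \rp.
\end{equation*}
Summing against $e^{2\pi i \ell \cdot r/(2m)}$, substituting $\ell \mapsto -\ell$ to match the $\zeta$-sign of $\vth_{m,\ell}$, and then decomposing the $\IZ$-sum over $\ell$ into residues modulo $2m$ plus shifts (using that $mj^2 + \ell j \in \IZ$ for the shift variable $j$) reconstructs $\vth_{m,\ell}(z;\tau)$ inside the sum, yielding case (1). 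For case (2) the extra factor $(-1)^k = e^{2\pi i k/2}$ shifts the Fourier dual variable by $\frac{1}{2}$, and the additional $\frac{1}{2}$-shift of the summation lattice generates, after completing the square, the phase factor $e^{-\pi i m}$ along with the alternating sign $(-1)^{r+\ell}$ in the stated transformation.

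The principal obstacle is the sign and phase bookkeeping in case (2): one must carefully track how the half-integer lattice shift, the alternating summand $(-1)^k$, and the sign convention built into $\vth_{m,r}$ interact under Poisson summation, and then verify that a consistent choice of residue representatives modulo $2m$ produces exactly the factor $(-1)^{r+\ell} e^{-\pi i m}$ appearing in the formula. Once this is managed, the remaining computation reduces to the integer case.
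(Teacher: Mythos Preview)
The paper states this lemma without proof, treating it as a standard preliminary fact about unary theta functions. Your approach---direct substitution for the $T$-transformation and Poisson summation for the $S$-transformation---is the canonical way to establish such identities, and the outline you give is correct. In particular, your handling of case~(2) is sound: writing $n=k+\frac{r}{2m}+\frac{1}{2}$ makes the sign $(-1)^k$ manifest, the observation that $mk(k+1)\in\IZ$ for $m\in\IZ+\frac{1}{2}$ is exactly what is needed for the $T$-transformation, and for the $S$-transformation the combination of the $\frac{1}{2}$-lattice shift with the Fourier-dual $\frac{1}{2}$-shift coming from $(-1)^k$ does produce the extra factor $(-1)^{r+\ell}e^{-\pi i m}$ after completing the square. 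There is no meaningful comparison to make with the paper's argument, since none is given.
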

We denote the derivatives of $\vth_{m, r}(z;\t)$ with respect to $z$ as:
\begin{equation*}
\vth_{m, r}^{[k]} (\t) := \left[ \lp \frac{1}{4 \pi i m} \frac{\del}{\del z} \rp^k \vth_{m, r} (z;\t) \right]_{z=0}.
\end{equation*}
Note that we drop the superscript if $k=0$.

Another function we use is the quasimodular Eisenstein series

\begin{equation*}
E_2 (\t) := 1 - 24 \sum_{n=1}^\infty \sum_{d|n}d q^{n},
\end{equation*}
which satisfies the (quasi)modular transformations
\begin{equation*}
E_2 (\t+1) = E_2 (\t), \qquad
E_2 \lp -\frac{1}{\t} \rp = \t^2 E_2 (\t) + \frac{6 \t}{\pi i}  .
\end{equation*}
This function is used in the definition of the {\it Ramanujan-Serre derivative},
\begin{equation*}
\DDD_k := \frac{1}{2\pi i} \frac{\del}{\del \t} -\frac{k}{12} E_2 (\t),
\end{equation*}
which maps modular forms of weight $k$ to modular forms of weight $k+2$.

Finally, in Sections \ref{sec:A_2_parafermion} and \ref{sec:B_2_parafermion}, we analyze Fourier coefficients of two multivariable meromorphic Jacobi forms defined as follows:
\begin{equation}\label{defineT}
T_{A} (\bm z;\tau) := 
\frac{1}{\vartheta(z_1;\tau)\vartheta(z_2;\tau)\vartheta(z_1+z_2;\tau)}
, \ \ \ \ T_{B}(\bm z;\tau):=\frac{T_A(\bm z;\tau)}{\vartheta(2z_1+z_2;\tau)}. 
\end{equation}
Here we recall that a Jacobi form $f:\C^N\times \mathbb{H}\to \C$ of weight $k\in\frac12\Z$ and matrix index 
$M\in \nolinebreak\frac14\Z^{N\times N}$ satisfies the following transformation laws (with multipliers $\nu_1, \nu_2$):
\begin{enumerate}[leftmargin=*]
\item For $\left(\begin{smallmatrix} a & b\\c & d \end{smallmatrix}\right)\in \SL_2(\Z)$ we have
\begin{align*}
f\left(\frac{\bm{z}}{c\tau+d};\frac{a\tau+b}{c\tau+d}\right) = \nu_1\pmat{a & b \\ c& d} \left(c\tau+d\right)^k e^{\frac{2\pi i c}{c\tau +d} \bm{z}^T M \bm{z}} f(\bm{z};\tau).
\end{align*}
\item For $(\bm{m}, \bm{\ell})\in\Z^N\times \Z^N$ we have
\begin{align*}
f\left(\bm{z}+\bm{m}\tau +\bm{\ell};\tau \right) = \nu_2(\bm{m},\bm{\ell}) q^{-\bm{m}^T M \bm{m}} e^{-4\pi i \bm{m}^T M \bm{z}} f(\bm{z};\tau).
\end{align*}
\end{enumerate}
From \eqref{thetamod} and \eqref{JE} we easily see that 
$T_A$ and $T_B$ transform like Jacobi forms with weights $-\frac{3}{2}$ and $-2$, and matrix indices $-\frac{1}{2}\left(\begin{smallmatrix} 2 & 1 \\  1 & 2 \end{smallmatrix}\right)$ and $-\frac{1}{2}\left(\begin{smallmatrix} 6 & 3  \\ 3 & 3 \end{smallmatrix}\right)$, respectively (with some multipliers). We also consider in Section \ref{sec:Schur_Zhat} for $k \in \mathbb{N}$, 
\begin{equation*} \label{schur}
\mathbb{T}_k(\bm z;\tau):=\frac{\vartheta(z_1; (k+1) \tau)\vartheta(z_2; (k+1) \tau)\vartheta(z_1+z_2; (k+1) \tau)}{\vartheta\left(z_1;  \tau\right)\vartheta\left(z_2;  \frac{k+1}{2} \tau\right)\vartheta \left(z_1+z_2; \frac{k+1}{2}  \tau\right)}.
\end{equation*}
The function $\mathbb{T}_k ((k+1) \bm{z};\tau)$ with rescaled elliptic variables is a Jacobi form of weight zero and matrix index $-\frac{k+1}{2}\left(\begin{smallmatrix} k+1 & 1 \\ 1 & 2 \end{smallmatrix}\right)$.

\section{Products of Sign Functions and Iterated Integrals}\label{sec:sign_lemma}

A key technical result in this paper is the following lemma which allows one to write products of sign functions in terms of iterated integrals. This lemma essentially follows from Proposition 3.8 of \cite{ABMP}, which gives an expression that allows efficient numeric evaluation of double error functions developed there. These double error functions play a fundamental role in understanding modular properties of indefinite theta functions for lattices of signature $(n,2)$. The double error functions become signs towards infinity and this is what we express in the next lemma. It is further processed and cast into a form from which the modular properties of false theta functions are manifest.

\begin{lem}\label{SignLemma}
For $\ell_1,\ell_2 \in \mathbb R$, $\kappa\in\R$, with $\lp \ell_1,\ell_2+ \kappa\ell_1 \rp \neq (0,0)$, we have 
\begin{align*}
&\textnormal{sgn}(\ell_1)\textnormal{sgn}(\ell_2+\kappa \ell_1)  q^{\frac{\ell_1^2}{2} + \frac{\ell_2^2}{2}} =\int_{\tau}^{\tau+i\infty}  \frac{\ell_1 e^{\pi i  \ell_1^2 w_1}}{\sqrt{i (w_1-\tau)}} 
\int_{\tau}^{w_1}   \frac{\ell_2 e^{\pi i \ell_2^2 w_2}}{\sqrt{i (w_2-\tau)}} d w_2 d w_1 \\
&\hspace{6em} \qquad +
\int_{\tau}^{\tau+i\infty }  \frac{m_1 e^{\pi i m_1^2 w_1}}{\sqrt{i (w_1-\tau)}} 
\int_{\tau}^{w_1}  \frac{m_2 e^{\pi i  m_2^2 w_2}}{\sqrt{i (w_2-\tau)}} d w_2 d w_1 + 
\frac{2}{\pi} \arctan(\kappa)  q^{\frac{\ell_1^2}{2} + \frac{\ell_2^2}{2}} ,
\end{align*}
where $\sgn (x) := \frac{x}{|x|}$ for $x \neq 0$, $\sgn (0) := 0$, $m_1 := \frac{\ell_2 + \kappa \ell_1}{\sqrt{1+\kappa^2}}$, and $m_2 := \frac{\ell_1 - \kappa \ell_2}{\sqrt{1+\kappa^2}}$. 
\end{lem}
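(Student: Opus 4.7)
The plan is to evaluate each iterated integral on the right-hand side in closed form by parametrizing the contours and reducing to Gaussian integrals, and then to check the remaining identity as a purely trigonometric statement about a sum of three arctangents. Writing $w_j = \tau + i s_j$ for $s_j \geq 0$ and taking the branch $\sqrt{i(w_j - \tau)} = i\sqrt{s_j}$, the factor $q^{\alpha^2/2}$ pulls out of the exponential. After the rescaling $u_j = |\alpha_j|\sqrt{\pi s_j}$, a generic iterated integral
\begin{equation*}
J(\alpha, \beta) := \int_\tau^{\tau + i\infty} \frac{\alpha \, e^{\pi i \alpha^2 w_1}}{\sqrt{i(w_1 - \tau)}} \int_\tau^{w_1} \frac{\beta \, e^{\pi i \beta^2 w_2}}{\sqrt{i(w_2 - \tau)}} d w_2 \, d w_1
\end{equation*}
with $\alpha, \beta \in \IR \setminus \{0\}$ reduces to
\begin{equation*}
J(\alpha, \beta) = \frac{4 \, \sgn(\alpha) \sgn(\beta)}{\pi} q^{\frac{\alpha^2 + \beta^2}{2}} \int_0^\infty e^{-u_1^2} \int_0^{(|\beta|/|\alpha|) u_1} e^{-u_2^2} \, du_2 \, du_1.
\end{equation*}
The remaining double Gaussian is the integral over a wedge of half-angle $\arctan(|\beta|/|\alpha|)$ in the first quadrant, and polar coordinates evaluate it to $\frac{1}{2}\arctan(|\beta|/|\alpha|)$. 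Combined with the elementary identity $\sgn(\alpha)\sgn(\beta)\arctan(|\beta|/|\alpha|) = \arctan(\beta/\alpha)$, this yields
\begin{equation*}
J(\alpha, \beta) = \frac{2}{\pi} q^{\frac{\alpha^2 + \beta^2}{2}} \arctan\!\lp \frac{\beta}{\alpha} \rp,
\end{equation*}
while the degenerate cases $\alpha = 0$ or $\beta = 0$ trivially give $J = 0$.

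Specializing to $(\alpha, \beta) = (\ell_1, \ell_2)$ and then $(m_1, m_2)$, and noting that $m_1^2 + m_2^2 = \ell_1^2 + \ell_2^2$, the right-hand side of the lemma collapses to $\frac{2}{\pi} q^{(\ell_1^2 + \ell_2^2)/2} \cdot S$, where
\begin{equation*}
S := \arctan\!\lp \frac{\ell_2}{\ell_1} \rp + \arctan\!\lp \frac{\ell_1 - \kappa \ell_2}{\ell_2 + \kappa \ell_1} \rp + \arctan(\kappa).
\end{equation*}
It therefore suffices to show $S = \frac{\pi}{2} \sgn(\ell_1) \sgn(\ell_2 + \kappa \ell_1)$. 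Setting $(x, y, z)$ equal to the three arctangent arguments, a direct computation gives both $1 - xy - yz - zx = 0$ and
\begin{equation*}
x + y + z - xyz = \frac{(1 + \kappa^2)(\ell_1^2 + \ell_2^2)}{\ell_1 (\ell_2 + \kappa \ell_1)}.
\end{equation*}
By the three-term addition formula for arctangent, the vanishing denominator forces $S \in \frac{\pi}{2}\IZ$, while the sign of the numerator pins down the sign of $S$ as $\sgn(\ell_1) \sgn(\ell_2 + \kappa \ell_1)$ modulo $\pi$.

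The main subtlety is pinning down the correct multiple of $\pi/2$ and handling the loci where an individual arctangent is undefined. Since each arctangent lies in $(-\pi/2, \pi/2)$, one has $S \in (-3\pi/2, 3\pi/2)$, so a priori $S \in \{ \pm \pi/2, \pm 3\pi/2 \}$. To rule out $\pm 3\pi/2$, I would work on each of the four connected components of $\IR^3 \setminus (\{\ell_1 = 0\} \cup \{\ell_2 + \kappa \ell_1 = 0\})$, on which $S$ is locally constant; checking one representative per component (e.g.\ $(1,1,0)$ yields $S = \pi/2$) fixes the correct branch, and the jumps of $S$ by $\pm \pi$ across the exceptional hyperplanes exactly match the jumps in $\sgn(\ell_1)\sgn(\ell_2 + \kappa\ell_1)$. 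The degenerate boundary cases $\ell_1 = 0$ and $\ell_2 + \kappa \ell_1 = 0$ of the original lemma are then settled by direct substitution: in both cases exactly one of $J(\ell_1, \ell_2), J(m_1, m_2)$ vanishes while the other evaluates to $-\frac{2}{\pi} \arctan(\kappa) q^{(\ell_1^2 + \ell_2^2)/2}$, exactly cancelling the explicit $\arctan \kappa$ term and yielding zero, in agreement with the left-hand side.
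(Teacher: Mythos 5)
Your proposal is correct and takes essentially the same route as the paper: both evaluate the iterated integrals via the vertical parametrization and a Gaussian wedge integral to obtain $\frac{2}{\pi}\arctan\left(\frac{\beta}{\alpha}\right)q^{\frac{\alpha^2+\beta^2}{2}}$, use $m_1^2+m_2^2=\ell_1^2+\ell_2^2$, and reduce the lemma to the identity $\arctan\left(\frac{\ell_2}{\ell_1}\right)+\arctan\left(\frac{m_2}{m_1}\right)+\arctan(\kappa)=\frac{\pi}{2}\,\sgn(\ell_1)\sgn(\ell_2+\kappa\ell_1)$. Your explicit verification of that identity (via $xy+yz+zx=1$, the sign of $x+y+z-xyz$, and the component analysis) and of the degenerate cases merely fills in details the paper dispatches with ``general properties of arctangent'' and ``can be shown similarly.''
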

\begin{rem}
We use $\t+i\infty$ in the upper limits of these integrals to indicate that all
such integrals are taken along the vertical path from $\t$ to $i \infty$ and we use the principal value of the square root. 
\end{rem}
\begin{proof}[Proof of Lemma \ref{SignLemma}]

We first assume that both $\ell_1, \ell_2+\kappa \ell_1 \neq 0$.
Shifting $w_j \mapsto i w_j + \tau$ the first term on the right-hand side of the lemma equals 
\begin{equation}\label{firstterm}
-\ell_1 \ell_2 q^{\frac{\ell_1^2}{2} + \frac{\ell_2^2}{2}}  \int_0^{\infty} \frac{e^{- \pi \ell_1^2 w_1}}{\sqrt{-w_1}} \int_0^{w_1} \frac{e^{- \pi \ell_2^2 w_2}}{\sqrt{-w_2}} d w_2 d w_1.
\end{equation}
On the path of integration, we have $\sqrt{-w_j} = i \sqrt{w_j}$. Changing $w_j\mapsto w_j^2$, equation \eqref{firstterm} thus equals 
\begin{equation*}
4\ell_1\ell_2 q^{\frac{\ell_1^2}{2} + \frac{\ell_2^2}{2}}  \int_0^\infty e^{-\pi \ell_1^2w_1^2} \int_0^{w_1} e^{-\pi \ell_2^2w_2^2} d w_2d w_1.
\end{equation*}
We then employ the following integral identity, which is straightforward to verify
\begin{equation*}
4\ell_1\ell_2\int_0^\infty e^{-\pi \ell_1^2w_1^2} \int_0^{w_1} e^{-\pi \ell_2^2w_2^2}d w_2d w_1= \frac{2}{\pi} \arctan \lp   \frac {\ell_2}{\ell_1}\rp.
\end{equation*}
Using that $m_1^2+m_2^2=\ell_1^2+\ell_2^2$, the statement of the lemma is equivalent to
\begin{equation*}
\frac{2}{\pi}
\lp \arctan\lp \frac{\ell_2}{\ell_1} \rp + \arctan\lp \frac{m_2}{m_1} \rp + \arctan(\kappa) \rp \overset{}{=} \sgn(\ell_1)\sgn(\ell_2+\kappa \ell_1).
\end{equation*}
This identity may be deduced using general properties of arctangent.  The cases in which one of $\ell_1, \ell_2+\kappa \ell_1$ vanishes can be shown similarly.
\end{proof}

Now, consider a general rank two false theta function
$$ \sum_{\bm n\in\Z^2+{\bm \a}} \sgn(n_1)\sgn(n_2)q^{\frac{1}{2}\left(an_1^2+2bn_1n_2 + cn_2^2\right)},$$
where $a$, $b$, and $c$ are integers such that the quadratic form in the exponent is positive definite, and ${\bm \a } =(\a_1,\a_2) \in \mathbb{Q}^2$. Moreover define the theta functions
\begin{align*}
\Theta_1({\bm w})& :=\sum_{{\bm n} \in \mathbb{Z}^2+{\bm \alpha}} n_1 \left( n_2+\frac{b}{c}n_1\right) e^{\pi i \frac{\DD}{c} n_1^2 w_1+ \pi i c \left(n_2+\frac{b}{c}n_1\right)^2 w_2}, \\ 
\Theta_2({\bm w})& :=\sum_{{\bm n} \in \mathbb{Z}^2+{\bm \alpha} } n_2 \lp n_1+\frac{b}{a}n_2 \rp  e^{\pi i \frac{\DD}{a} n_2^2 w_1  + \pi i a \lp n_1+\frac{b}{a} n_2 \rp^2 w_2}, 
\end{align*}
where $\DD:=ac-b^2 >0$, 
and the modular theta function
$$\Theta(\tau):=\sum_{{\bm n} \in \mathbb{Z}^2+{\bm \alpha} } q^{\frac{1}{2}\left(an_1^2+2b n_1n_2+c n_2^2\right)}.$$
Then we have the following:

\begin{prop}\label{prop:generic_falsetht_sign_lemma} We have
\begin{align*}
& \sum_{{\bm n} \in \mathbb{Z}^2+{\bm \alpha}}{\rm sgn}(n_1){\rm sgn}(n_2)q^{\frac12\left(an_1^2+2b n_1n_2+c n_2^2\right)}  
- \frac{2}{\pi} \d_{\bm{\a} \in \IZ^2} \arctan\left(\frac{b}{\sqrt{\DD}}\right)
\\ & \qquad  \qquad
=\sqrt{\DD} \int_{\tau}^{\tau+i\infty}  \int_{\tau}^{w_1} \frac{\Theta_1({\bm w})+ \Theta_2({\bm w})}{ \sqrt{i (w_1-\tau)} \sqrt{i (w_2-\tau)}}d w_2 d w_1 - \frac{2}{\pi}  \arctan\left(\frac{b}{\sqrt{\DD}}\right) \Theta(\tau),
\end{align*}
where $\d_C=1$ if a condition $C$ holds and zero otherwise.
\end{prop}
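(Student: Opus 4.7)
The proof proposal is a direct term-by-term application of Lemma \ref{SignLemma}, after completing the square in the quadratic form in two different ways. The task is primarily bookkeeping: identifying the right parameters, accounting for the $\bm n = \bm 0$ correction, and justifying interchange of sum and integral.

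\textbf{Step 1: Completing the square.} Since the form is positive definite we may assume $a, c, \DD > 0$. The two natural completions are
\[
a n_1^2 + 2 b n_1 n_2 + c n_2^2 = \tfrac{\DD}{c} n_1^2 + c \bigl(n_2 + \tfrac{b}{c} n_1\bigr)^2 = \tfrac{\DD}{a} n_2^2 + a \bigl(n_1 + \tfrac{b}{a} n_2\bigr)^2 .
\]
Set $\ell_1 := n_1 \sqrt{\DD/c}$, $\ell_2 := \sqrt{c}\,(n_2 + \tfrac{b}{c} n_1)$, and $\kappa := -\tfrac{b}{\sqrt{\DD}}$. A short computation shows $\ell_2 + \kappa \ell_1 = \sqrt{c}\, n_2$, so that
\[
\sgn(\ell_1)\sgn(\ell_2 + \kappa \ell_1) = \sgn(n_1)\sgn(n_2), \qquad \tfrac12(\ell_1^2 + \ell_2^2) = \tfrac12(a n_1^2 + 2 b n_1 n_2 + c n_2^2).
\]
Moreover $\ell_1 \ell_2 = \sqrt{\DD}\, n_1 (n_2 + \tfrac{b}{c} n_1)$. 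The auxiliary quantities $m_1, m_2$ from Lemma~\ref{SignLemma} work out, after simplification, to $m_1 = n_2 \sqrt{\DD/a}$ and $m_2 = \sqrt{a}\,(n_1 + \tfrac{b}{a} n_2)$, so that $m_1 m_2 = \sqrt{\DD}\, n_2 (n_1 + \tfrac{b}{a} n_2)$ and the exponents $\pi i m_j^2 w_j$ reproduce the exponents appearing in $\Theta_2(\bm w)$.

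\textbf{Step 2: Term-by-term application of Lemma \ref{SignLemma}.} For each $\bm n \in \Z^2 + \bm\alpha$ with $(\ell_1, \ell_2 + \kappa \ell_1) \neq (0,0)$ (equivalently $\bm n \neq \bm 0$), Lemma \ref{SignLemma} rewrites the summand as two iterated integrals plus the constant term $\frac{2}{\pi}\arctan(\kappa)\, q^{\frac12(a n_1^2 + 2 b n_1 n_2 + c n_2^2)}$. Since $\arctan(\kappa) = -\arctan(b/\sqrt{\DD})$, summing the constant terms over all allowed $\bm n$ produces
\[
-\tfrac{2}{\pi}\arctan\!\bigl(\tfrac{b}{\sqrt{\DD}}\bigr)\Bigl(\Theta(\tau) - \d_{\bm\alpha \in \IZ^2}\Bigr),
\]
since the only excluded term corresponds to $\bm n = \bm 0$, which occurs exactly when $\bm\alpha \in \IZ^2$. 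Note that on the left-hand side this excluded term contributes nothing anyway, because $\sgn(0) = 0$.

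\textbf{Step 3: Swapping sum and integral, identifying the theta functions.} On the vertical path $w_j = \tau + i t_j$, $t_j > 0$, the exponential factors decay like $e^{-\pi \DD n_1^2 t_1/c}\, e^{-\pi c (n_2 + b n_1/c)^2 t_2}$ for the first integral (and analogously for the second), and the prefactors $\ell_1 \ell_2$, $m_1 m_2$ grow only polynomially. This yields absolute and uniform convergence of $\sum_{\bm n}$ against the integrand on compact subintervals of $(0,\infty)^2$, together with an integrable majorant near the endpoints (the $t_j^{-1/2}$ singularities at the inner endpoint are harmless and the Gaussian decay takes care of $\infty$). Fubini/Tonelli then lets us interchange, and the resulting sums are precisely $\sqrt{\DD}\,\Theta_1(\bm w)$ and $\sqrt{\DD}\,\Theta_2(\bm w)$, since the $\bm n = \bm 0$ term (when present) is killed by the factors $\ell_1 \ell_2$ or $m_1 m_2$.

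\textbf{Step 4: Combining.} Assembling the pieces gives
\[
\sum_{\bm n \in \IZ^2 + \bm\alpha} \sgn(n_1)\sgn(n_2)\, q^{\frac12(a n_1^2 + 2 b n_1 n_2 + c n_2^2)} = \sqrt{\DD}\int_\tau^{\tau + i\infty}\!\!\int_\tau^{w_1} \frac{\Theta_1(\bm w) + \Theta_2(\bm w)}{\sqrt{i(w_1 - \tau)}\sqrt{i(w_2 - \tau)}}\, d w_2\, d w_1 - \tfrac{2}{\pi}\arctan\!\bigl(\tfrac{b}{\sqrt{\DD}}\bigr)\bigl(\Theta(\tau) - \d_{\bm\alpha \in \IZ^2}\bigr),
\]
and moving the Kronecker delta contribution to the left-hand side yields the stated identity.

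The main subtlety I expect is the convergence/interchange step and the careful handling of the $\bm n = \bm 0$ case: one must verify that the excluded term in Lemma \ref{SignLemma} is precisely compensated by the $\d_{\bm\alpha \in \IZ^2}$ correction, and one must check that no boundary contribution from the vertical path near the inner endpoint spoils the manipulation. All other steps are algebraic.
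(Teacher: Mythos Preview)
Your proof is correct and follows exactly the same route as the paper: the identical substitutions $\ell_1 = n_1\sqrt{\DD/c}$, $\ell_2 = \sqrt{c}(n_2 + \tfrac{b}{c}n_1)$, $\kappa = -b/\sqrt{\DD}$ are made, Lemma~\ref{SignLemma} is applied term by term, and the $\bm n = \bm 0$ correction is handled in the same way. Your Step~3 justifying the interchange of sum and integral is more explicit than the paper, which simply asserts the result after summing.
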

\begin{proof}
Letting $\ell_1=\sqrt{\frac{\DD}{c}}n_1$, $\ell_2=\sqrt{c} n_2+\frac{b}{ \sqrt{c}}n_1$,
and $\kappa=-\frac{b}{\sqrt{\DD}}$, we get
$$\textnormal{sgn}(\ell_1)\textnormal{sgn}(\ell_2+\kappa \ell_1)  q^{\frac{\ell_1^2}{2} + \frac{\ell_2^2}{2}}={\rm sgn}(n_1){\rm sgn}(n_2)q^{\frac{1}{2}\left(an_1^2+2b n_1n_2+c n_2^2\right)}.$$
Summing over $\mathbb{Z}^2+{\bm \alpha}$ using Lemma \ref{SignLemma}, noting that $m_1=\sqrt{\frac{\DD}{a}}n_2$ and $m_2=\frac{1}{ \sqrt{a}}(an_1+bn_2)$ and including a correction for the case $(\ell_1,\ell_2+\k \ell_1) =  (0,0)$ which occurs if $\bm \alpha \in \IZ^2$ yields the claim.
\end{proof}

\begin{rem}
We may modify the above construction to get a family of functions for which both the modular part including $ \Theta(\tau)$ and the correction term including $\d_{\bm{\a} \in \IZ^2}$ vanish.
For this purpose, consider false theta functions of the form
$$\sum_{\bm n\in\Z^2+(0,\alpha_2)} (-1)^{n_1} \sgn(n_1)\sgn(n_2)q^{\frac{1}{2}\left(an_1^2+2bn_1n_2 + cn_2^2\right)},$$
such that $a \mid b$ and $\frac{b}{a} \alpha_2 \equiv \frac12 \pmod 1$. In particular, we have $\a_2 \not \in \IZ$ and hence the correction term, with $\d_{\bm{\a} \in \IZ^2}$, vanishes. Note that this condition is satisfied if $\a_2=\frac{1}{2r}$, where $r=\frac{b}{a}$. Some series of this form are discussed in Chapter 5. As in Proposition \ref{prop:generic_falsetht_sign_lemma}, we can represent these $q$-series as iterated 
Eichler-type integrals with $\Theta_1$, $\Theta_2$, and $\Theta$ now picking up an additional $(-1)^{n_1}$ factor. Because $\frac{b}{a}\a_2 \equiv \frac12 \pmod 1$, the corresponding $\Theta$-part is vanishing as
$$\sum_{{\bm n} \in \mathbb{Z}^2+(0,\alpha_2)} (-1)^{n_1} q^{\frac12\left(an_1^2+2bn_1 n_2+c n_2^2\right)}=\sum_{n_2 \in \mathbb{Z}+\alpha_2} q^{\frac{\DD}{2a} n_2^2} \sum_{n_1 \in \mathbb{Z}} (-1)^{n_1}
 q^{\frac{a}{2} \left(n_1+\frac{bn_2}{a}\right)^2} =0.$$
\end{rem}

\section{Decomposition Formulas for Meromorphic Jacobi Forms}\label{sec:meromorphic_Jacobi_decomp}

Before moving to examples, we collect a few auxiliary results used in decomposing multivariable meromorphic Jacobi forms and extracting their Fourier coefficients. We start with a basic result involving two Jacobi theta functions. Besides its use in Section \ref{sec:A_2_parafermion}, the methods employed in its proof are employed as a blueprint for more complex variations that we need in sections below.
Here and throughout we sometimes drop dependencies on $\t$ if they are clear from the context; e.g. we often write $\eta$ instead of $\eta(\tau)$. The next result was suggested to us by S.~Zwegers.

\begin{lem} \label{prop:double_theta_identity}

For $r\in\IZ$ and $w \not\in \IZ\tau+\IZ$ we have
\begin{align*}
\frac{\zeta^r}{\vartheta(z)\vartheta(z+w)} 
=\frac{i}{\eta^3\vartheta(w)}\sum_{n\in\IZ} \frac{q^{n^2-rn}e^{-2\pi i n w }}{1-\zeta q^{n}}
-\frac{ie^{-2\pi i r w}}{\eta^3 \vartheta(w)} \sum_{n\in\IZ} \frac{q^{n^2-rn}e^{2\pi i n w}}{1-\zeta e^{2\pi i w}q^{n}}.
\end{align*}
\end{lem}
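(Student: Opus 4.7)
The strategy is to prove the equivalent identity $\Psi(z) := \vartheta(z)\vartheta(z+w)\,G(z) = \zeta^r$, where $G(z)$ denotes the right-hand side of the lemma. First, I will verify that $\Psi$ is entire in $z$. The Gaussian factor $q^{n^2}$ makes both series defining $G$ absolutely convergent, and produces only simple poles at $z \in \IZ\tau+\IZ$ (coming from the $(1-\zeta q^n)^{-1}$ factors in the first sum) and at $z \in -w+\IZ\tau+\IZ$ (from the $(1-\zeta e^{2\pi i w}q^n)^{-1}$ factors in the second sum). These coincide with the simple zeros of $\vartheta(z)\vartheta(z+w)$, so the product $\Psi$ is holomorphic on all of $\IC$, as can be checked using $\vartheta(z)\sim -2\pi\eta^3 z$ near $z=0$ together with the quasi-periodicity of $\vartheta$.

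Next I will verify the transformation laws $\Psi(z+1)=\Psi(z)$ and $\Psi(z+\tau)=q^r\Psi(z)$. The $1$-periodicity is immediate since $G$ depends on $z$ only through $\zeta$ and $\vartheta(z+1)\vartheta(z+1+w)=\vartheta(z)\vartheta(z+w)$. For the $\tau$-periodicity, the factor $\vartheta(z)\vartheta(z+w)$ picks up $q^{-1}\zeta^{-2}e^{-2\pi i w}$, so it remains to show $G(z+\tau)=q^{r+1}\zeta^2 e^{2\pi i w}G(z)$. Shifting the summation index $n\mapsto n-1$ in both series gives $G^{(r)}(z+\tau) = q^{r+1}e^{2\pi i w}G^{(r+2)}(z)$, where the superscript records the dependence on the parameter $r$. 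This reduces the problem to the recursion $G^{(r+2)}(z)=\zeta^2 G^{(r)}(z)$, which is the main technical obstacle. Forming the difference term by term and exploiting the elementary identity
\begin{equation*}
\frac{q^{-2n}-\zeta^2}{1-\zeta q^n} = q^{-2n}+\zeta q^{-n}
\end{equation*}
collapses all denominators, and the two remaining pure theta series cancel after the substitution $n\mapsto -n$ combined with an integer shift of summation.

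With $\Psi$ now known to be entire, $1$-periodic, and to satisfy $\Psi(z+\tau)=q^r\Psi(z)$, expanding $\Psi(z)=\sum_{k\in\IZ}\psi_k\zeta^k$ and comparing coefficients in the quasi-periodicity forces $\psi_k(q^k-q^r)=0$, so $\psi_k=0$ for $k\neq r$ and $\Psi(z)=\psi_r\zeta^r$ for some constant $\psi_r=\psi_r(w,\tau)$. Finally, I will pin down $\psi_r=1$ via the limit $z\to 0$: the $n=0$ term of the first series gives $G(z)\sim -1/(2\pi\eta^3\vartheta(w)z)$ while $\vartheta(z)\vartheta(z+w)\sim -2\pi\eta^3\vartheta(w)z$, so $\Psi(z)\to 1$, matching $\psi_r\zeta^r\to\psi_r$ and completing the proof.
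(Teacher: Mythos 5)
Your proof is correct, but it takes a genuinely different route from the paper's. The paper \emph{derives} the identity by a residue computation: it multiplies the meromorphic Jacobi form $h(u)=\frac{e^{2\pi i r u}}{\vartheta(u)\vartheta(u+w)}$ by an Appell-type kernel $g(z,u)$ that is elliptic in $u$ and has residue $\frac{1}{2\pi i}$ at $u=z$, integrates over a fundamental parallelogram (where the integral vanishes by ellipticity), and reads off the identity from the vanishing sum of residues at $u=z$, $u=0$, and $u=-w$. You instead \emph{verify} the identity by the classical characterization of theta-type functions: multiplying the right-hand side $G$ by $\vartheta(z)\vartheta(z+w)$, you check entirety (simple poles of $G$ at $\IZ\tau+\IZ$ and $-w+\IZ\tau+\IZ$ against the simple zeros of the theta product, the two lattices being disjoint since $w\notin\IZ\tau+\IZ$), establish $\Psi(z+\tau)=q^r\Psi(z)$ via the index-shift recursion $G^{(r+2)}=\zeta^2 G^{(r)}$ — your partial-fraction identity $\frac{q^{-2n}-\zeta^2}{1-\zeta q^n}=q^{-2n}+\zeta q^{-n}$ is the key step here, and I checked that it works: applied once with $\zeta$ and once with $\zeta e^{2\pi i w}$ it collapses both series, and the four resulting theta series do cancel in pairs under $n\mapsto -n$ followed by the shifts $n\mapsto n-(r+2)$ and $n\mapsto n-(r+1)$ — and then pin down the single surviving Fourier coefficient by the $z\to 0$ limit, which uses exactly the same data ($\vartheta(z)\sim -2\pi\eta^3 z$, i.e.\ \eqref{diffT}) as the paper's residue $\Res_{u=0}(h)=-\frac{1}{2\pi\eta^3\vartheta(w)}$. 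Your argument is more elementary and self-contained (no contour integration, no auxiliary two-variable kernel), but it presupposes knowing the answer; the paper's residue method is constructive, produces the decomposition from scratch, and — more importantly for this paper — serves as the stated blueprint that is reused essentially verbatim for the three-theta analogues (Lemmas \ref{prop:triple_theta_identity} and \ref{prop:triple_theta_identity2}), where guessing the right-hand side in advance would be considerably harder.
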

\begin{proof}
Define
\begin{equation*}
h(\zz)  :=\frac{e^{2\pi i r \zz}}{\vartheta(\zz)\vartheta(\zz+w)},\qquad
g(z,\zz):= \sum_{n\in\IZ} \frac{q^{n^2-rn}e^{-2\pi i n (2 \zz + w)}}{1-\zeta e^{-2\pi i  \zz}q^{n}}.
\end{equation*}
Using \eqref{JE} gives that $\zz\mapsto h(\zz)g(z,\zz)$ is elliptic.  Let $P_\delta:=\delta+[0,1]+[0,1]\t$ be a fundamental parallelogram with $\delta$ in a small neighborhood of $0$ such that $\zz\mapsto h(\zz)g(z,\zz)$ has no poles on the boundary.  Moreover, we assume that $z$ and $-w$ are in $P_\delta$ and prove the proposition statement for such values; the result generalizes to the whole complex plane by analytic continuation.
If we integrate $h(\zz)g(z,\zz)$ around $P_\delta$ counterclockwise, then the integral vanishes by ellipticity of the function and we have, by the Residue Theorem
\begin{equation*}
0=\int_{\del P_\delta}h(\zz)g(z,\zz)d\zz=2\pi i \sum_{w\in P_\delta}\Res_{\zz=w}(h(\zz)g(z,\zz)).
\end{equation*}
Using that $\Res_{\zz=z}(g(z,\zz))=\frac{1}{2\pi i}$, we get
\begin{align*}
h(z)=-2\pi i  g(z,0) \Res_{\zz=0} (h(\zz))- 2\pi i g(z,-w) \Res_{\zz=-w} (h(\zz)).
\end{align*}
We compute, using \eqref{diffT}
\begin{align*}
\Res_{\zz=0} (h(\zz))
= - \frac{1}{2\pi \eta^3 \vartheta(w)},\qquad
\Res_{\zz=-w} (h(\zz))
= \frac{e^{-2\pi i r w}}{2\pi \eta^3 \vartheta(w)},
\end{align*}
which then gives the claim.
\end{proof}

We next state two variations of this result involving three Jacobi theta functions, which we need in Section \ref{sec:B_2_parafermion} and
whose proofs follow the same method as the one used in Lemma \ref{prop:double_theta_identity}.

\begin{lem}\label{prop:triple_theta_identity}
For $w_1,w_2,w_1-w_2\not\in\Z\tau+\Z,$ and $r \in \Z+\frac{1}{2}$, we have
\begin{align*}
\frac{\zeta^r}{\vartheta(z)\vartheta(z+w_1)\vartheta(z+w_2)} 
&=
\frac{i}{\eta^3 \vt(w_1)\vt(w_2)} 
\sum_{n\in\Z} \frac{(-1)^n q^{\frac{3n^2}{2}-rn} e^{-2\pi i n(w_1+w_2)}}{1- \z q^n}
\\ & \qquad 
+\frac{ie^{-2\pi i rw_1}}{\eta^3 \vt(w_1)\vt(w_1-w_2)} 
\sum_{n\in\Z} \frac{(-1)^n q^{\frac{3n^2}{2}-rn} e^{-2\pi i n(w_2-2w_1)}}{1- \zeta e^{2\pi iw_1}q^n}
\\ & \qquad \qquad
+
\frac{ie^{-2\pi i r w_2}}{\eta^3 \vt(w_2)\vt(w_2-w_1)}  \sum_{n\in\Z} \frac{(-1)^n q^{\frac{3n^2}{2}-rn} e^{-2\pi i n(w_1-2w_2)}}{1- \zeta e^{2\pi i w_2}q^n}.
\end{align*}
\end{lem}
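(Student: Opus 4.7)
The plan is to mimic the proof of Lemma \ref{prop:double_theta_identity} but with a product of three theta functions in the denominator and three residues to collect. First I set
\begin{equation*}
h(\zz) := \frac{e^{2\pi i r \zz}}{\vartheta(\zz)\vartheta(\zz+w_1)\vartheta(\zz+w_2)},
\end{equation*}
which, in a fundamental parallelogram $P_\delta := \delta + [0,1] + [0,1]\tau$ (with $\delta$ chosen so that no pole lies on the boundary and with $z, -w_1, -w_2 \in P_\delta$), has exactly three simple poles at $\zz = 0$, $\zz = -w_1$, and $\zz = -w_2$. Using \eqref{JE} together with $e^{2\pi i r}=-1$ (since $r \in \Z + \tfrac12$), one finds
\begin{equation*}
h(\zz+1)=h(\zz),\qquad h(\zz+\tau) = -q^{r+\frac 32}e^{6\pi i\zz}e^{2\pi i (w_1+w_2)}h(\zz).
\end{equation*}
To cancel this factor (and pick up the desired pole at $\zz = z$) I take
\begin{equation*}
g(z,\zz) := \sum_{n\in\Z} \frac{(-1)^n q^{\frac{3n^2}{2}-rn}\, e^{-2\pi i n(w_1+w_2)}\, e^{-6\pi i n\zz}}{1-\zeta e^{-2\pi i \zz}q^n},
\end{equation*}
and a direct shift $n \mapsto n+1$ in the sum confirms that $g(z,\zz+\tau) = -q^{-r-\frac 32}e^{-6\pi i \zz}e^{-2\pi i (w_1+w_2)} g(z,\zz)$, while $g(z,\zz+1) = g(z,\zz)$. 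Hence $\zz \mapsto h(\zz)g(z,\zz)$ is elliptic in $\zz$.

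Next I apply the Residue Theorem. Integrating $h(\zz)g(z,\zz)$ around $\partial P_\delta$ gives zero by ellipticity, so the sum of residues inside vanishes. The only pole of $g(z,\zz)$ in $P_\delta$ is simple at $\zz=z$ (coming from the $n=0$ summand) with residue $\tfrac{1}{2\pi i}$; the other three poles are those of $h$. This yields
\begin{equation*}
\frac{\zeta^r}{\vartheta(z)\vartheta(z+w_1)\vartheta(z+w_2)} = -2\pi i \sum_{\zz_0 \in\{0,-w_1,-w_2\}} g(z,\zz_0) \Res_{\zz=\zz_0}(h(\zz)).
\end{equation*}
I then compute the three residues using \eqref{diffT}: at $\zz=0$ one gets $-\tfrac{1}{2\pi\eta^3 \vartheta(w_1)\vartheta(w_2)}$, at $\zz=-w_1$ one gets $\tfrac{e^{-2\pi i r w_1}}{2\pi\eta^3 \vartheta(w_1)\vartheta(w_2-w_1)}$ (using the oddness $\vartheta(-w_1)=-\vartheta(w_1)$), and symmetrically at $\zz=-w_2$. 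Substituting the corresponding values $g(z,0)$, $g(z,-w_1)$, $g(z,-w_2)$ — each of which, after simplifying the exponentials, gives exactly the numerator structure appearing on the right-hand side of the lemma — and replacing $\vartheta(w_2-w_1)=-\vartheta(w_1-w_2)$ to match signs produces the three claimed terms.

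As in Lemma \ref{prop:double_theta_identity}, the identity is then established for $z,-w_1,-w_2$ in a fundamental parallelogram, and extended to all admissible $z$ (and $w_1,w_2$ with $w_1,w_2,w_1-w_2 \not\in \Z\tau+\Z$) by analytic continuation. The main obstacle is really just the bookkeeping: choosing $g$ so that $h\cdot g$ is elliptic requires carefully tracking the $e^{2\pi i r}=-1$ from the half-integer $r$ together with the cubic factor from three theta functions (this forces the factor $(-1)^n q^{3n^2/2}e^{-6\pi i n\zz}$ in $g$), and then the three residue computations must be organized with attention to the oddness and quasi-periodicity of $\vartheta$ to yield the precise form stated.
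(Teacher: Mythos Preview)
Your proof is correct and follows precisely the approach the paper intends: the paper states that Lemmas \ref{prop:triple_theta_identity} and \ref{prop:triple_theta_identity2} are proved by the same method as Lemma \ref{prop:double_theta_identity}, and your choice of $h$, your construction of $g$ to make $h\cdot g$ elliptic, and your residue calculation at the three simple poles $0,-w_1,-w_2$ carry this out in detail. The bookkeeping you describe (the $(-1)^n q^{3n^2/2}e^{-6\pi i n\zz}$ factor forced by the cubic quasi-periodicity and the half-integrality of $r$, and the sign adjustment via $\vartheta(w_2-w_1)=-\vartheta(w_1-w_2)$) is exactly right.
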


\begin{lem}\label{prop:triple_theta_identity2}
For  $w_1,w_2\not\in\Z\frac{\tau}{2}+\Z\frac{1}{2} $, $w_1-w_2\not\in\Z\tau+\Z$, and $r \in \Z$, we have
\begin{align*}
&\frac{\zeta^r}{\vartheta(2z) \vartheta(z+w_1) \vartheta(z+w_2) }
\\ & = 
\frac{i e^{-2\pi i r w_1}}{\eta^3\vt(2w_1)\vt(w_1-w_2)} 
\sum_{n\in\Z} \frac{ q^{3n^2-rn} e^{2\pi i n(5w_1-w_2)}}{1- \zeta e^{2\pi i w_1}q^n}
+\frac{i e^{-2\pi i r w_2}}{\eta^3\vt(2w_2)\vt(w_2-w_1)} 
\sum_{n\in\Z} \frac{ q^{3n^2-rn} e^{2\pi i n(5w_2-w_1)}}{1- \zeta e^{2\pi i w_2}q^n}
\\&\qquad  
+\frac{i}{2\eta^3}
\sum_{\ell_1,\ell_2\in\{0,1\}}
\frac{(-1)^{\ell_1+\ell_2+r\ell_2}q^{\frac{\ell_1(\ell_1+r)}{2}} }{ 
\vt \lp w_1+\frac{\ell_1\tau+\ell_2}{2}\rp 
\vt\left(w_2+\frac{\ell_1\tau+\ell_2}{2}\right)} 
\sum_{n\in\Z} \frac{ q^{3n^2-(3\ell_1+r)n} e^{-2\pi i n(w_1+w_2)}}{1- (-1)^{\ell_2} \zeta q^{n-\frac{\ell_1}{2}} } .
\end{align*}
\end{lem}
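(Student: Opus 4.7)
The plan is to follow the method outlined in the proof of Lemma \ref{prop:double_theta_identity}, with the main new feature being that $\vt(2\zz)$ contributes four zeros to a fundamental parallelogram instead of one.

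First I introduce
\[
h(\zz):=\frac{e^{2\pi i r \zz}}{\vt(2\zz)\vt(\zz+w_1)\vt(\zz+w_2)}
\]
and compute its elliptic transformations using \eqref{JE}. A short calculation gives $h(\zz+1)=h(\zz)$ (here using $r\in\Z$) and $h(\zz+\tau)=q^{r+3}e^{12\pi i\zz+2\pi i(w_1+w_2)}h(\zz)$; the extra factor $q^2 e^{8\pi i\zz}$ compared to Lemma \ref{prop:double_theta_identity} comes from $\vt(2(\zz+\tau))$. Next I seek a companion $g(z,\zz)$ that has a single simple pole at $\zz=z$ with residue $\frac{1}{2\pi i}$ and makes $h(\zz)g(z,\zz)$ elliptic. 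The ansatz
\[
g(z,\zz):=\sum_{n\in\Z}\frac{q^{an^2+bn}e^{-2\pi i n(c\zz+d)}}{1-\zeta e^{-2\pi i \zz}q^n}
\]
together with the required $\zz\mapsto \zz+\tau$ behaviour forces $a=3$, $b=-r$, $c=6$, and $d=w_1+w_2$, giving
\[
g(z,\zz)=\sum_{n\in\Z}\frac{q^{3n^2-rn}e^{-2\pi i n(6\zz+w_1+w_2)}}{1-\zeta e^{-2\pi i \zz}q^n}.
\]

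Then, as in Lemma \ref{prop:double_theta_identity}, I fix a fundamental parallelogram $P_\delta:=\delta+[0,1]+[0,1]\tau$ with $\delta$ near $0$ so that $h(\zz)g(z,\zz)$ is regular on $\del P_\delta$ and the seven relevant poles $\zz\in\{z,-w_1,-w_2\}\cup\{\tfrac{\ell_1\tau+\ell_2}{2}:\ell_1,\ell_2\in\{0,1\}\}$ lie inside. Ellipticity forces $\oint_{\del P_\delta}h(\zz)g(z,\zz)d\zz=0$, and the Residue Theorem then expresses $h(z)$ as $-2\pi i$ times the sum of residues at the six poles of $h$, each multiplied by the value of $g(z,\cdot)$ at that pole. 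The residues at $-w_1$ and $-w_2$ are computed exactly as in Lemma \ref{prop:triple_theta_identity}, using $\vt'(0)=-2\pi\eta^3$ together with the oddness of $\vt$; evaluating $g$ there gives
\[
g(z,-w_j)=\sum_{n\in\Z}\frac{q^{3n^2-rn}e^{2\pi i n(5w_j-w_{3-j})}}{1-\zeta e^{2\pi i w_j}q^n},
\]
which reproduces the first two sums in the claimed identity.

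The main computational obstacle lies in handling the four half-lattice poles coming from $\vt(2\zz)$. Near $\zz=\tfrac{\ell_1\tau+\ell_2}{2}$, writing $\zz'=\zz-\tfrac{\ell_1\tau+\ell_2}{2}$ and applying $\vt(z+\ell_1\tau+\ell_2)=(-1)^{\ell_1+\ell_2}q^{-\ell_1^2/2}e^{-2\pi i \ell_1 z}\vt(z)$ together with $\vt(z)\sim -2\pi\eta^3 z$ at $z=0$ yields
\[
\Res_{\zz=\frac{\ell_1\tau+\ell_2}{2}}h(\zz)=\frac{(-1)^{\ell_1+\ell_2+1+r\ell_2}q^{\ell_1(\ell_1+r)/2}}{4\pi\eta^3\,\vt\!\left(\tfrac{\ell_1\tau+\ell_2}{2}+w_1\right)\vt\!\left(\tfrac{\ell_1\tau+\ell_2}{2}+w_2\right)}.
\]
Evaluating $g$ at the same point, where $e^{-6\pi i n\ell_2}=1$ simplifies the exponential, produces
\[
g\!\left(z,\tfrac{\ell_1\tau+\ell_2}{2}\right)=\sum_{n\in\Z}\frac{q^{3n^2-(3\ell_1+r)n}e^{-2\pi i n(w_1+w_2)}}{1-(-1)^{\ell_2}\zeta q^{n-\ell_1/2}}.
\]
Multiplying by $-2\pi i$, summing the four half-lattice contributions, and combining with the two $-w_j$ terms gives the identity. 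The only delicate bookkeeping is tracking the signs $(-1)^{\ell_1+\ell_2+r\ell_2}$ and the prefactor $q^{\ell_1(\ell_1+r)/2}$ through the residue calculation, which accounts for the form of the double sum in the statement.
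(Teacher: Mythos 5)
Your proof is correct and follows exactly the route the paper intends: the paper gives no separate proof of this lemma, stating only that it ``follows the same method as the one used in Lemma \ref{prop:double_theta_identity}'', which is precisely your contour-integration argument with the elliptic product $h(\zz)g(z,\zz)$, and your choices $a=3$, $b=-r$, $c=6$, $d=w_1+w_2$, the residues at $-w_1,-w_2$, and the four half-lattice residues from $\vartheta(2\zz)$ all check out and reproduce the stated identity.
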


\section{Generic Parafermionic Characters of type $A_2$}\label{sec:A_2_parafermion}

\subsection{Parafermions and parfermion algebras}
The parafermionic conformal field theories first appeared in the famous article of Fateev and Zamolodchikov on $\mathbb{Z}_k$-parafermions \cite{FZ}. The fields in such theories have fractional conformal weight and are not necessarily local to each other, which thereby generalizes the familiar bosonic and fermionic free fields.

In mathematics literature, parafermions and parafermionic spaces originally appeared in the ground-breaking work of Lepowsky and Wilson on $Z$-algebras and Rogers--Ramanujan identities \cite{LW}. This concept was later formalized by Dong and Lepowsky in \cite{DL}, where parafermionic spaces \cite{FZ} were viewed as examples of  generalized vertex algebras. Although \cite{LW,FZ} dealt only with $\mathfrak{sl}_2$ parafermions at positive integral levels, parafermions can be defined for any affine Lie algebra $\mathfrak{g}$ and any level $k$.
In this generality, the parafermionic space $\Omega_k(\mathfrak{g})$ consists of highest weight vectors for the Heisenberg vertex subalgebra inside the affine vertex algebra $V_k(\mathfrak{g})$. 
The {\em parafermion (vertex)  algebra}, denoted by $N_k(\mathfrak{g}) \subset \Omega_k(\mathfrak{g})$, is defined as the charge zero subspace of the parafermionic space. It has a natural vertex operator algebra structure of central charge $c=\frac{k {\rm dim}(\mathfrak{g})}{k+h^\vee}- n$. Then the {\em parafermionic character} is defined by 
\begin{equation*}
\mathrm{ch}[N_k(\mathfrak{g})](q):=\mathrm{tr}|_{N_k(\mathfrak{g})} q^{L(0)-\frac{c}{24}},
\end{equation*}
where $L(0)$ is the degree operator. 
This can in turn be expressed as the constant term (\ref{par_ct}) discussed in the introduction. To illustrate this concept, let us consider the simplest non-trivial case of $V_2(\mathfrak{sl}_2)$. The parafermionic space $\Omega_{2}(\mathfrak{sl}_2)$ is simply the free fermion vertex superalgebra and $N_2(\mathfrak{sl}_2)$ is the even part thereof, also known as the $c=\frac12$ Ising model. Therefore,
\begin{equation*}
\mathrm{ch}[N_2(\mathfrak{sl}_2)](q)=q^{-\frac{1}{48}}
\left(\frac{\left(-q^{\frac12};q\right)_\infty}{2}+\frac{\left(q^{\frac12};q\right)_\infty}{2} \right).
\end{equation*}
For other levels,  $k  \in \mathbb{N}$, $k \geq 3$, the algebraic structure of $N_k(\mathfrak{sl}_2)$ is more complicated and involves non-linear $W$-algebras. Parafermionic characters
of $\mathfrak{sl}_2$ for positive integral levels are well-understood \cite{Andrews,KP} and they transform as vector-valued modular forms of weight zero. Similar results persist for higher rank algebras.

For generic $k$, that is if $V_k(\mathfrak{g})$ is the universal affine vertex algebra (e.g. $k \not\in \mathbb{Q}$), properties of $N_k(\mathfrak{g})$ are quite different. The structure of the parafermion  algebra is known explicitly only in a handful of examples and their parafermionic characters are not modular.

\subsection{Parafermionic character of $A_2$}

We are finally at a point where we can work out our first example involving generic parafermionic characters of type $A_2$. As a warm up to this discussion, we first consider the simplest example, which is the generic parafermionic characters of type $A_1$.

\noindent {\bf Example.} \label{example} For $\mathfrak{g}=\operatorname{sl}_2$, the parafermionic character is known to be (see for instance  \cite{AMW,Andrews})
\begin{equation*}\label{CT}
{\rm CT}_{ [\zeta]} \left(\frac{1}{({\zeta}q ;q)_\infty  ({ \zeta^{-1}}q;q)_\infty}\right)=\frac{1}{(q;q)_\infty^2}\left(-1+2 \sum_{n = 0}^{\infty} (-1)^{n} q^{\frac{n(n+1)}{2}}\right)=-\frac{q^{\frac{1}{12}}}{\eta(\tau)^2} +2\frac{q^{-\frac{1}{24}} \psi(\tau)}{\eta(\tau)^2},
\end{equation*}
where $\psi(\tau):=\sum_{n \in \mathbb{Z}} {\rm sgn}(n+\frac14)q^{2(n+\frac14)^2}$ is Rogers' false theta function. The modular properties of  $\frac{\psi(\tau)}{\eta(\tau)^2}$ were studied and used in \cite{BN} to give a Rademacher type exact formula for its coefficients in the $q$-expansion. The constant term in the above example splits into two $q$-series with different modular behaviors (note the different $q$-powers). Our goal is to
obtain a similar decomposition for the $A_2$ vacuum character. 

\subsection{Expression in terms of false theta functions} \hfill

Specializing equations \eqref{paraf} and \eqref{par_ct} to the case of $A_2$ with positive roots 
\begin{equation*}
\Delta_+:=\left\{\alpha_1 =\left(\begin{matrix} 1 \\0 \end{matrix}\right), \ 
\alpha_2 =\left(\begin{matrix} 0 \\1 \end{matrix}\right), \ 
\alpha_1+\alpha_2\right\},
\end{equation*}
the goal in this section is to study the constant term of 
\begin{equation*}
G(\bm \z) := q^{\frac{8k}{24(k+3)}} (q;q)_\infty^{2} {\rm ch}[V_k(\mathfrak{sl}_3)]({\bm \zeta};q)=
\frac{1}{\left(\zeta_1q,\zeta_1^{-1}q,\zeta_2q,\zeta_2^{-1}q,\zeta_1\zeta_2q,\zeta_1^{-1}\zeta_2^{-1}q;q\right)_\infty},
\end{equation*}
where $(a_1, \dots , a_\ell ;q)_n := \prod_{j=1}^\ell (a_{j};q)_n.$ 
Using \eqref{JT} we rewrite it as ($\zeta_j:=e^{2\pi i z_j}$)
\begin{equation}\label{eq:G_theta_eta}
G(\bm \z) = i q^{\frac{1}{4}} \eta^3 \frac{\z_1^{-1} \z_2^{-1} (1-\z_1)(1-\z_2)(1-\z_1\z_2)}{\vth (z_1) \vth (z_2) \vth (z_1+z_2)} .
\end{equation}

Then, to state our result on the constant term of $G(\bm \z)$, we introduce the following functions:
\begin{align*}
G_0 (\t) & := 1+3 \sum_{n \in \IZ} |n| q^{n^2}  
- 6 q^{-\frac{1}{4}} \sum_{n \in \IZ+\frac{1}{2}} |n| q^{n^2},  \\ 
\Psi (\t) & := \sum_{\bm{n} \in\IZ^2+\lp \frac{1}{3},\frac{1}{3} \rp }  
\sgn (n_1) \sgn (n_2)n_1  q^{Q_A (\bm{n})},
\qquad \mathrm{where} \ \ Q_A(\bm{n}) := n_1^2+n_1n_2+n_2^2 . 
\end{align*}
\begin{prop}\label{prop_const_term_A2}
For $|q| < |\zeta_1|, |\zeta_2|, |\zeta_1 \zeta_2| < 1$ we have
\begin{align*}
\mathrm{CT}_{[\bm\z]} \left(G\left(\bm\z\right)\right) &= 
\frac{q^{\frac{1}{4}} }{\eta(\t)^6 }G_0 (\t)  
+ \frac{9q^{-\frac{1}{12}}}{\eta(\t)^6} \Psi (\t) 
\\ & 
= 1 + 3q^2 + 8 q^3 + 21 q^4 + 48 q^5 + 116 q^6
+252 q^7 + 555 q^8 + 1156 q^9  + O\left(q^{10}\right).
\end{align*}
\end{prop}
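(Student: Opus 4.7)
The strategy is to first eliminate the $z_1$-dependence of $G$ using Lemma \ref{prop:double_theta_identity}, and only afterwards extract the constant term with respect to $\zeta_2$ against the remaining $\vartheta(z_2)^{-2}$ factor. Starting from the theta-function form \eqref{eq:G_theta_eta}, I would expand the numerator $\zeta_1^{-1}\zeta_2^{-1}(1-\zeta_1)(1-\zeta_2)(1-\zeta_1\zeta_2)$ as the signed sum of six monomials $\zeta_1^{r_1}\zeta_2^{r_2}$ with $(r_1,r_2)\in\{(-1,-1),(0,-1),(-1,0),(0,1),(1,0),(1,1)\}$. For each such monomial, I would apply Lemma \ref{prop:double_theta_identity} with $z=z_1$, $w=z_2$, $\zeta=\zeta_1$, and $r=r_1$, which converts the $z_1$-dependence of $\vartheta(z_1)^{-1}\vartheta(z_1+z_2)^{-1}$ into two Appell-Lerch-type sums over a single integer $n$, in which $\zeta_1$ enters only through factors $(1-\zeta_1 q^n)^{-1}$ or $(1-\zeta_1 e^{2\pi i z_2}q^n)^{-1}$. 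Multiplication by the external $\vartheta(z_2)^{-1}$ produces a $\vartheta(z_2)^{-2}$ factor common to every piece.

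Next I would take the constant term in $\zeta_1$ in the region $|q|<|\zeta_1|<1$: the factor $(1-\zeta_1 q^n)^{-1}$ contributes $1$ for $n\geq 0$ and, after geometric-series inversion, $0$ for $n<0$, while $(1-\zeta_1 e^{2\pi i z_2}q^n)^{-1}$ is already a power series in $\zeta_1$ and contributes $1$. Collecting the six contributions gives an identity of the shape
\begin{equation*}
\mathrm{CT}_{[\zeta_1]}\!\left(G(\bm\zeta)\right) \;=\; \frac{q^{1/4}}{\vartheta(z_2;\tau)^2}\,\Phi(\zeta_2;\tau),
\end{equation*}
where $\Phi(\zeta_2;\tau)$ is an explicit Laurent series in $\zeta_2$ assembled from the six monomials together with the $e^{\pm 2\pi i n z_2}$ factors inherited from the lemma, so that its $\zeta_2$-degrees are linked to the summation indices $n\geq 0$.

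The main technical step is the final extraction of the constant term in $\zeta_2$. Because $\vartheta(z_2)^{-2}$ has a double pole at $z_2=0$, I would compute $\mathrm{CT}_{[\zeta_2]}(\zeta_2^{m}/\vartheta(z_2)^2)$ in the annulus $|q|<|\zeta_2|<1$ by differentiating the Appell-Lerch representation of $\vartheta(z_2)^{-1}$ with respect to $z_2$ (alternatively, by a contour integral around $|\zeta_2|=r\in(|q|,1)$). This produces a divisor-type $q$-series in an auxiliary integer variable, and combining it with $\Phi$ yields a double sum over two integer variables. This step is where I expect the principal obstacle, since one must carefully match domains of expansion and keep track of the pole contribution, and it is the source of the eventual $\eta^{-6}$ normalization via the identity $\left[\partial_z\vartheta(z;\tau)\right]_{z=0}=-2\pi\eta(\tau)^3$.

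Finally, the resulting double sum should split cleanly into two pieces. A ``diagonal'' contribution, coming from the poles of $\vartheta(z_2)^{-2}$ and the $n=0$ term in the Appell sums, collapses into classical divisor sums of the form $\sum_n |n|q^{n^2}$ and $\sum_n|n|q^{n^2+n}$, assembling into $G_0(\tau)$ and producing the first summand $q^{1/4}G_0(\tau)/\eta(\tau)^6$. The remaining ``off-diagonal'' part inherits the product-of-signs structure from the two independent summation ranges $n\geq 0$; after completing the square on $n^2-r_1n$ together with the quadratic contribution from the $\zeta_2$-extraction, the exponent rearranges into $Q_A(n_1,n_2)=n_1^2+n_1n_2+n_2^2$ with characteristic $\bm n\in\IZ^2+(\tfrac13,\tfrac13)$, and the numerical prefactor simplifies to $9$, yielding $9q^{-1/12}\Psi(\tau)/\eta(\tau)^6$. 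A $q$-expansion through $O(q^{10})$ then confirms the stated numerical series as a consistency check.
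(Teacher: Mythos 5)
Your overall route is the paper's own proof with the roles of $z_1$ and $z_2$ interchanged: since $T_A$ is symmetric in $z_1,z_2$, applying Lemma \ref{prop:double_theta_identity} with $(z,w)=(z_1,z_2)$ and then extracting the two constant terms sequentially is a mirror image of the paper's application with $(r,z,w)\mapsto(r_2,z_2,z_1)$ followed by Lemma \ref{cor:theta_squared_identity} and the simultaneous extraction via \eqref{eq:constant_term}; the sequential extraction is legitimate because all expansions converge absolutely in the product annulus $|q|<|\zeta_1|,|\zeta_2|,|\zeta_1\zeta_2|<1$, and the same six-monomial decomposition of the numerator of \eqref{eq:G_theta_eta} underlies both arguments. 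So the architecture is sound and essentially identical to the paper's.

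Two local points, however, need repair. First, your evaluation of the $\zeta_1$-constant term of the second Appell-type sum is wrong as stated: in the region $|q|<|\zeta_1\zeta_2|<1$ the factor $(1-\zeta_1\zeta_2 q^n)^{-1}$ is a power series in $\zeta_1$ only for $n\geq 0$; for $n<0$ one must invert the geometric series, and the $\zeta_1$-constant term is $0$, not $1$ --- exactly the dichotomy \eqref{eq:constant_term}, the same as for $(1-\zeta_1 q^n)^{-1}$. Keeping coefficient $1$ for all $n$ would contaminate the result with extra $n<0$ terms that there is no reason to expect to cancel among the six signed monomials; your later remark that the $\zeta_2$-degrees are ``linked to the summation indices $n\geq 0$'' suggests you intend the correct truncation, but the stated justification must be replaced. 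Second, you cannot obtain the coefficients of $\zeta_2^{m}/\vartheta(z_2)^2$ by differentiating the Appell--Lerch representation of $\vartheta(z_2)^{-1}$ with respect to $z_2$: that differentiation produces $\vartheta'(z_2)/\vartheta(z_2)^2$, i.e.\ kernels $\zeta_2 q^n/(1-\zeta_2 q^n)^2$ weighted by the theta derivative, not a monomial over $\vartheta^2$. The workable version of your idea --- and the paper's --- is to differentiate in the \emph{shift} variable, that is, to take $w\to 0$ in Lemma \ref{prop:double_theta_identity} using $\left[\partial_z\vartheta(z;\tau)\right]_{z=0}=-2\pi\eta(\tau)^3$, which is precisely Lemma \ref{cor:theta_squared_identity} (your contour-integral fallback can also be made to work). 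Finally, note that your ``diagonal vs.\ off-diagonal'' split is asserted rather than carried out, and it is where most of the labor lies: in the paper the one-dimensional pieces of $G_0$ arise as boundary terms from unit shifts of $n_1,n_2$ and the reflection $\bm n\mapsto -(1,1)-\bm n$, and the prefactor $9$ arises as $3$ from grouping the six monomials times $3$ from symmetrizing $(n_1+2n_2)\mapsto 3n_1$ over $\IZ^2+\left(\frac13,\frac13\right)$; with your transposed setup the analogous bookkeeping still has to be done in full.
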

To prove Proposition \ref{prop_const_term_A2}, we employ Lemma \ref{prop:double_theta_identity} and another auxiliary result stated below, which itself is a corollary of Lemma \ref{prop:double_theta_identity}.

\begin{lem}\label{cor:theta_squared_identity}
For $r\in\IZ$ we have
\begin{align*}
\frac{\zeta^r}{\vartheta(z)^2} =
-\frac{1}{\eta^6} \sum_{n\in\IZ} q^{n^2-rn}
\lp \frac{2n-r-1}{1-\zeta q^n} + \frac{1}{(1-\zeta q^n)^2} \rp .
\end{align*}
\end{lem}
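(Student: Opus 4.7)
The plan is to obtain the identity by taking the limit $w\to 0$ in Lemma \ref{prop:double_theta_identity}. The left-hand side visibly tends to $\zeta^r/\vt(z)^2$, so the task is to evaluate the limit of the right-hand side, where both $\vt(w)$ in the denominator and the bracketed numerator degenerate.

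First I would write the right-hand side of Lemma \ref{prop:double_theta_identity} as $\frac{i}{\eta^3}\cdot \frac{F(w)}{\vt(w)}$, where
\begin{equation*}
F(w) := \sum_{n\in\IZ}\frac{q^{n^2-rn}e^{-2\pi i n w}}{1-\zeta q^n} - e^{-2\pi i r w}\sum_{n\in\IZ}\frac{q^{n^2-rn}e^{2\pi i n w}}{1-\zeta e^{2\pi i w}q^n}.
\end{equation*}
Setting $w=0$ shows that the two sums cancel, so $F(0)=0$. Since $\vt$ is odd with $\vt'(0)=-2\pi\eta^3$ by \eqref{diffT}, L'Hôpital gives
\begin{equation*}
\lim_{w\to 0} \frac{F(w)}{\vt(w)} = \frac{F'(0)}{-2\pi\eta^3},
\end{equation*}
so the right-hand side tends to $-\frac{i F'(0)}{2\pi\eta^6}$.

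Next I would compute $F'(0)$ term by term. The first sum contributes $-2\pi i\sum_n \frac{n\, q^{n^2-rn}}{1-\zeta q^n}$. For the second sum, differentiating $\frac{e^{2\pi i(n-r)w}}{1-\zeta e^{2\pi i w}q^n}$ at $w=0$ via the quotient rule gives $\frac{2\pi i[(n-r)+\zeta q^n(1+r-n)]}{(1-\zeta q^n)^2}$. Writing $\zeta q^n = 1-(1-\zeta q^n)$ in the numerator splits this cleanly as $\frac{1}{(1-\zeta q^n)^2}-\frac{1+r-n}{1-\zeta q^n}$. Combining everything gives
\begin{equation*}
F'(0) = -2\pi i \sum_{n\in\IZ} q^{n^2-rn}\left(\frac{2n-r-1}{1-\zeta q^n}+\frac{1}{(1-\zeta q^n)^2}\right),
\end{equation*}
and substituting back yields the claimed identity.

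The main subtlety is justifying that one may interchange the $w\to 0$ limit with the infinite sums over $n$; since the sums converge absolutely and uniformly on a small neighborhood of $w=0$ (provided $|q|<|\zeta|<1$, with the series in $n$ of both signs being geometrically small thanks to the quadratic $q^{n^2-rn}$), this is routine. The rest is the bookkeeping sketched above.
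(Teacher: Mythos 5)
Your proposal is correct and follows essentially the same route as the paper: both take the limit $w\to 0$ in Lemma \ref{prop:double_theta_identity}, use that the bracketed function $F$ satisfies $F(0)=0$ together with $\vartheta'(0)=-2\pi\eta(\tau)^3$ from \eqref{diffT} to reduce the limit to $-\frac{iF'(0)}{2\pi\eta^6}$, and then evaluate the $w$-derivative termwise to produce $\frac{2n-r-1}{1-\zeta q^n}+\frac{1}{(1-\zeta q^n)^2}$. Your explicit check of uniform convergence justifying the limit--sum interchange is a detail the paper leaves implicit, but the argument is the same.
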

\begin{proof}
Using \eqref{diffT} and the fact that $\vt$ is odd, we find that for a function $F$ that is holomorphic in a neighborhood of $w=0$, we have
\begin{equation*}
\frac{F(w)}{\vartheta(w)} = -\frac{1}{2\pi \eta^3} \lp \frac{F(0)}{w} + F'(0) \rp + O (w)
\qquad
\mbox{as } w \to 0.
\end{equation*}
Thus taking the limit $w \to 0$ in Lemma \ref{prop:double_theta_identity} yields (noting that $F(0)=0$ in this case)
\begin{align*}
\frac{\zeta^r}{\vartheta(z)^2} = -\frac{i}{2 \pi \eta^6}
\sum_{n\in\IZ} q^{n^2-rn}
\lb \frac{\del}{\del w} \lp \frac{e^{-2\pi i n w }}{1-\zeta^r q^n}
-  \frac{e^{2\pi i (n-r) w}}{1-\zeta^r e^{2\pi i w}q^{n}} \rp \rb_{w=0}.
\end{align*}
The result follows, using that
\begin{equation*}
\frac{i}{2\pi} \lb \frac{\del}{\del w} \lp \frac{e^{-2\pi i n w }}{1-\zeta q^{n}}
-  \frac{e^{2\pi i (n-r) w}}{1-\zeta e^{2\pi i w}q^{n}} \rp \rb_{w=0}
=
\frac{2n-r-1}{1-\zeta q^n} + \frac{1}{(1-\zeta q^n)^2}. \qedhere
\end{equation*}
\end{proof}

We are now ready to compute Fourier coefficients of the meromorphic Jacobi form appearing in equation 
\eqref{eq:G_theta_eta}. To state our result, we define
\begin{align*}
D(\bm{r}) &:= \mathrm{CT}_{[\bm \zeta]} 
\lp \frac{i \eta^9 \zeta_1^{r_1} \zeta_2^{r_2}}{\vth(z_1)\vth(z_2) \vth(z_1+z_2)} \rp,\\
D_1 (\bm{r}) &:= \sum_{\bm{n} \in \IN_0^2}
(n_1+2n_2-r_1) 
q^{n_1^2+n_1n_2+n_2^2-r_2n_1-r_1n_2} ,
\\
D_2 (\bm{r}) &:= 
 \sum_{\bm{n} \in \IN_0^2} 
(n_1-2n_2+r_1-r_2) 
q^{n_1^2-n_1n_2+n_2^2-r_2n_1+(r_2-r_1)n_2}.
\end{align*}

\begin{cor}
\label{cor:constant_term_D}
For $|q| < |\zeta_1|, |\zeta_2|, |\zeta_1 \zeta_2| < 1$ and for $\bm{r} \in \IZ^2$ we have
\begin{align*}
D(\bm r)=D_1(\bm r)+D_2(\bm r).
\end{align*}
\end{cor}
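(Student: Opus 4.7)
The plan is to iterate Lemmas \ref{prop:double_theta_identity} and \ref{cor:theta_squared_identity}, extracting the constant term in $\zeta_2$ and then in $\zeta_1$. I would first apply Lemma \ref{prop:double_theta_identity} with $z\mapsto z_2$, $w\mapsto z_1$, $r\mapsto r_2$ to the factor $\frac{\zeta_2^{r_2}}{\vartheta(z_2)\vartheta(z_1+z_2)}$ and multiply through by $\frac{i\eta^9\zeta_1^{r_1}}{\vartheta(z_1)}$ to obtain
\begin{align*}
\frac{i\eta^9\zeta_1^{r_1}\zeta_2^{r_2}}{\vartheta(z_1)\vartheta(z_2)\vartheta(z_1+z_2)}
&=-\frac{\eta^6\zeta_1^{r_1}}{\vartheta(z_1)^2}\sum_{n\in\IZ}\frac{q^{n^2-r_2n}\zeta_1^{-n}}{1-\zeta_2q^n}\\
&\quad+\frac{\eta^6\zeta_1^{r_1-r_2}}{\vartheta(z_1)^2}\sum_{n\in\IZ}\frac{q^{n^2-r_2n}\zeta_1^n}{1-\zeta_1\zeta_2q^n}.
\end{align*}

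Next I would extract $\mathrm{CT}_{\zeta_2}$. The region constraint $|q|<|\zeta_2|,|\zeta_1\zeta_2|<1$ fixes, for each $n$, the unique convergent geometric expansion of $(1-\zeta_\ast q^n)^{-1}$ (with $\zeta_\ast\in\{\zeta_2,\zeta_1\zeta_2\}$): for $n\ge 0$ one expands in nonnegative powers of $\zeta_\ast q^n$, so the $\zeta_2$-constant term is $1$, while for $n<0$ one expands in negative powers and the $\zeta_2$-constant term vanishes. Both sums therefore restrict to $n\ge 0$, leaving factors of the form $\frac{\zeta_1^{r_1-n}}{\vartheta(z_1)^2}$ and $\frac{\zeta_1^{r_1-r_2+n}}{\vartheta(z_1)^2}$, each scaled by the appropriate $q$-power and sign.

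Then I would apply Lemma \ref{cor:theta_squared_identity} to each of these theta-squared factors and repeat the same region analysis, now in $\zeta_1$ with $|q|<|\zeta_1|<1$, to restrict the inner sum to $m\ge 0$; there the two pieces $\frac{2m-r-1}{1-\zeta_1 q^m}$ and $\frac{1}{(1-\zeta_1 q^m)^2}$ combine to contribute $2m-r$ for the relevant $r$. After relabeling $n\mapsto n_2$ and $m\mapsto n_1$, a short manipulation of the resulting quadratic exponents $n_1^2\pm n_1 n_2+n_2^2$ and their accompanying linear terms in $r_1,r_2$ shows that the first double sum reproduces $D_1(\bm r)$ while the second reproduces $D_2(\bm r)$.

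The hard part will be this last matching of presentations: applying Lemma \ref{prop:double_theta_identity} in the $z_2$ direction (rather than the $z_1$ direction) is essential to recover the specific formulas for $D_1$ and $D_2$ termwise. Applying it in the other direction still yields the correct value of $D(\bm r)$ as a $q$-series, but expressed as a differently arranged double sum whose equality with $D_1+D_2$ holds only after a further reorganization of terms.
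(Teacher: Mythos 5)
Your proposal is correct and is essentially the paper's own proof: the paper applies Lemma \ref{prop:double_theta_identity} with exactly your specialization $(r,z,w)\mapsto(r_2,z_2,z_1)$, then Lemma \ref{cor:theta_squared_identity} with $r\mapsto r_1-n_1$ and $r\mapsto r_1-r_2+n_1$, and extracts constant terms by the same geometric-series region analysis, encoded in \eqref{eq:constant_term}. The only difference --- you take $\mathrm{CT}_{[\zeta_2]}$ before invoking Lemma \ref{cor:theta_squared_identity}, while the paper expands fully over $\bm{n}\in\IZ^2$ and extracts both constant terms at the end --- is immaterial, since in the stated annulus the iterated and joint constant-term extractions agree.
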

\begin{proof}
Using Lemma \ref{prop:double_theta_identity} with $(r,z,w) \mapsto (r_2,z_2,z_1)$ we find that with $T_A$ defined in \eqref{defineT},
\begin{equation*}
T_A(\bm z) \zeta_2^{r_2}
=\frac{i}{\eta^3\vth(z_1)^2} \lp
\sum_{n_1\in\IZ} \frac{q^{n_1^2-r_2 n_1}\zeta_1^{-n_1}}{1-\zeta_2q^{n_1}}
- \sum_{n_1\in\IZ} \frac{q^{n_1^2-r_2 n_1}\zeta_1^{n_1-r_2}}{1-\zeta_1 \zeta_2q^{n_1}}
\rp.
\end{equation*}
Next, we use Lemma \ref{cor:theta_squared_identity} with $(r,z) \mapsto (r_1-n_1,z_1)$ and $(r,z) \mapsto (r_1-r_2+n_1,z_1)$ to write
\begin{align*}
&i \eta^9 T_A(\bm z) \zeta_1^{r_1} \zeta_2^{r_2}
= \sum_{\bm n \in \IZ^2} \frac{q^{n_1^2+n_1n_2+n_2^2-r_2n_1-r_1n_2}}{1-\zeta_2q^{n_1}}
\lp \frac{2n_2+n_1-r_1-1}{1-\zeta_1q^{n_2}} + \frac{1}{(1-\zeta_1q^{n_2})^2} \rp 
\notag
\\ & \quad
-\sum_{\bm n \in \IZ^2} \frac{q^{n_1^2-n_1n_2+n_2^2-r_2n_1+(r_2-r_1)n_2}}{1-\zeta_1 \zeta_2 q^{n_1}}
\lp \frac{2n_2-n_1+r_2-r_1-1}{1-\zeta_1 q^{n_2}} + \frac{1}{(1-\zeta_1 q^{n_2})^2} \rp .
\label{eq:T_expression_A2}
\end{align*}
The claim now follows using the identity
\begin{equation}\label{eq:constant_term}
\mathrm{CT}_{[\zeta]} \left(\frac{1}{(1-\zeta q^n)^k}\right) = 
\begin{cases}
1 &\mbox{ if } n \geq 0, \\
0 &\mbox{ if } n < 0,
\end{cases}
\end{equation}
which holds for $z$ sufficiently close to $0$ with $|\z| < 1$ and $k \in \mathbb N$.
\end{proof}

We are now ready to prove Proposition \ref{prop_const_term_A2}. 

\begin{proof}[Proof of Proposition \ref{prop_const_term_A2}]
Using \eqref{eq:G_theta_eta} and Corollary \ref{cor:constant_term_D}, for $|q| < |\zeta_1|, |\zeta_2|, |\zeta_1 \zeta_2| < 1$ we have
\begin{equation*}
\mathrm{CT}_{[\bm\z]} \left(G\left(\bm\z\right)\right) = \frac{q^{\frac{1}{4}}}{\eta^6} 
\sum_{\bm{r} \in \mathcal{S}_A} \e_A(\bm{r}) D(\bm{r}),
\end{equation*}
where 
\begin{align*}
\mathcal{S}_A &:= \left\{ (1,0),(0,1),(-1,-1),(-1,0),(0,-1),(1,1) \right \},
\\
\e_A (\bm{r}) &:= \begin{cases}
1 \qquad & \mbox{if } \bm{r} \in \{(1,0),(0,1),(-1,-1)\}, \\
-1 \qquad & \mbox{if } \bm{r} \in \{(-1,0),(0,-1),(1,1)\} .
\end{cases}
\end{align*}
Defining $Q_A^*(\bm{n}) := Q_A(-n_1,n_2)$,  we rewrite $D_1 (\bm{r})$ and $D_2 (\bm{r})$ as 
\begin{align*}
D_1 (\bm{r}) &= q^{-\frac{Q_A^*(\bm{r})}{3}} \sum_{\bm{n} \in \IN_0^2} (n_1+2n_2-r_1)
q^{Q_A \lp n_1+\frac{r_1-2r_2}{3}, n_2+\frac{r_2-2r_1}{3} \rp} ,
\\
D_2 (\bm{r}) &= q^{-\frac{Q_A^*(\bm{r})}{3}} \sum_{\bm{n} \in \IN_0^2} (n_1-2n_2+r_1-r_2)
q^{Q_A^* \lp n_1-\frac{r_1+r_2}{3}, n_2+\frac{r_2-2r_1}{3} \rp}  .
\end{align*}
Then, 
\begin{align*}
q^{\frac{1}{3}} \sum_{\bm{r} \in \mathcal{S}_A} \e_A(\bm{r}) D_1(\bm{r}) 
&= 
\sum_{\bm{n} \in\IN_0^2} 
\Bigg((n_1+2n_2-1) q^{Q_A \lp n_1+\frac{1}{3}, n_2-\frac{2}{3} \rp} 
+ (n_1+2n_2) q^{Q_A \lp n_1-\frac{2}{3}, n_2+\frac{1}{3} \rp} 
\\ & \qquad \qquad \qquad 
+ (n_1+2n_2+1) q^{Q_A \lp n_1+\frac{1}{3}, n_2+\frac{1}{3} \rp}   
-(n_1+2n_2+1) q^{Q_A \lp n_1-\frac{1}{3}, n_2+\frac{2}{3} \rp} 
\\ & \qquad \qquad \qquad 
- (n_1+2n_2) q^{Q_A \lp n_1+\frac{2}{3}, n_2-\frac{1}{3} \rp} 
-(n_1+2n_2-1)  q^{Q_A \lp n_1-\frac{1}{3}, n_2-\frac{1}{3} \rp} 
\Bigg) .
\end{align*}
Shifting either $n_1$ or $n_2$ by one while collecting the one-dimensional boundary terms yields
\begin{align*}
&\sum_{\bm{r} \in \mathcal{S}_A} \e_A(\bm{r}) D_1(\bm{r}) 
=
3 q^{-\frac{1}{3}} 
\sum_{\bm{n} \in\IN_0^2} \lp  (n_1+2n_2+1) q^{Q_A \lp n_1+\frac{1}{3}, n_2+\frac{1}{3} \rp}  
- (n_1+2n_2+2) q^{Q_A \lp n_1+\frac{2}{3}, n_2+\frac{2}{3} \rp}  \rp
\\&  \quad
+ \sum_{n=0}^\infty \lp  (n-1) q^{n^2} + 2n q^{n^2}  - (2n+1) q^{n(n+1)} - n q^{n(n+1)}  - 
(2n-1) q^{n(n-1)} - n q^{n(n+1)} \rp   .
\end{align*}
Changing $\bm n \mapsto -(1,1)-\bm n$ for the second two-dimensional term
and shifting $n \mapsto n+1$ in the one-dimensional contribution with the factor $q^{n(n-1)}$ we find that
\begin{align*}
\sum_{\bm{r} \in \mathcal{S}_A} \e_A(\bm{r}) D_1(\bm{r}) 
&=\frac{3}{2} q^{-\frac{1}{3}} 
\sum_{\bm{n} \in\IZ^2 +\lp \frac{1}{3}, \frac{1}{3} \rp} 
 (1+\sgn (n_1) \sgn (n_2) )(n_1+2n_2)q^{Q_A(\bm n)}
 \\
& \qquad\qquad  \qquad\qquad
+1+ \sum_{n=0}^\infty \lp (3n-1) q^{n^2} - 2(3n+1) q^{n(n+1)}  \rp .
\end{align*}

A similar computation gives
\begin{align*}
\sum_{\bm{r} \in \mathcal{S}_A} \e_A(\bm{r}) D_2(\bm{r})  
=& \frac{3}{2} q^{-\frac{1}{3}}
\sum_{\bm{n} \in\IZ^2+\lp \frac{1}{3},\frac{1}{3} \rp }  
  (-1+\sgn (n_1) \sgn (n_2) )(n_1+2n_2) q^{Q_A (\bm{n})}
  \\ & \qquad\qquad  \qquad\qquad
+ \sum_{n=0}^\infty \lp (3n+1) q^{n^2}  - 2(3n+2) q^{n(n+1)} \rp.
\end{align*}
Then, combining the two terms we find 
\begin{align*}
\sum_{\bm{r} \in \mathcal{S}_A} \e_A(\bm{r}) D(\bm{r}) 
&=
3 q^{-\frac{1}{3}}
\sum_{\bm{n} \in\IZ^2+\lp \frac{1}{3},\frac{1}{3} \rp }  
 \sgn (n_1) \sgn (n_2) (n_1+2n_2)q^{Q_A (\bm{n})}
   \\ & \qquad\qquad  \qquad\qquad
+1+ \sum_{n=0}^\infty \lp 6n q^{n^2}  - 6\lp 2n+1 \rp q^{\lp n+\frac{1}{2} \rp^2 - \frac{1}{4}} \rp.
\end{align*}
Noting the symmetry between $n_1$ and $n_2$ of the two-dimensional sum and antisymmetry of the two one-dimensional sums under $n \mapsto -n$ and $n \mapsto -n-1$, respectively, (as well as the vanishing of the first one-dimensional summand for $n=0$) yields the result.
\end{proof}

\begin{rem} Note that for $\bm r=(r_1,r_2)$, such that $r_1+ r_2 \equiv 0 \pmod 3$, the coefficient $D(\bm r)$ is a finite sum of one-dimensional false theta functions.
Specifically for $k \in \mathbb{N}_0$, we have
\begin{align*}
D(3k,3k) &=
\lp \sum_{j=0}^{k-1} - \sum_{j=k+1}^{3k}  \rp q^{j^2-3kj} \sum_{n =0}^\infty (2n+j-3k) q^{n^2+(j-3k)n}
\\ & \qquad \qquad 
+ \sum_{j=-k}^{k} q^{j^2-3k^2} \sum_{n =0}^\infty q^{n^2} \lp n \frac{q^{jn}+q^{-jn}}{2} + j  \left(q^{jn}-q^{-jn}\right) \rp  .
\end{align*}
In particular, $D(0,0)=\sum_{n = 1}^{\infty} n q^{n^2}$. This leads to the new $q$-hypergeometric representation 
$$
\frac{D(0,0)}{(q)_\infty^6}= \sum_{\bm{n} \in \N_0^4} \frac{q^{n_1+n_2+n_4}}{(q)_{n_1+n_4-n_3-1}(q)_{n_2+n_4-n_3-1}(q)_{n_1} (q)_{n_2}(q)_{n_3}(q)_{n_4}},$$
which easily follows from applying Euler's identity $\frac{1}{(a)_\infty}=\sum_{n = 0}^\infty \frac{a^n}{(q)_n}$ to 
$T_{A}(\bm z;\tau)$ six times.  

Another consequence of the formula for $D(0,0)$ is the following $q$-series identity 
\begin{equation*}
\sum_{n=1}^\infty n q^{n^2}
=\sum_{\bm n\in \mathbb N_0^4} (-1)^{n_1+n_2+n_3}q^{\frac{1}{2}\lp {n_1^2+n_2^2+n_3^2+n_1+n_2+n_3}\rp+(n_1+n_2+n_3+2)(n_4+1)-n_1}\lp 1 + q^{n_1-n_2-n_3-n_4-1}\rp,
\end{equation*}
which follows after three applications of another well-known identity \cite{Andrews}
$$\frac{(q)_\infty^2}{(\zeta)_\infty (\zeta^{-1} q)_\infty}=\sum_{\ell \in \mathbb{Z}} \zeta^{\ell} \sum_{n \geq 0}  (-1)^n q^{\frac{n^2+n}{2}+n| \ell | +\frac{1}{2}(|\ell|-\ell)}.$$
\end{rem}

\subsection{Modular properties of the parafermion character} \hfill

We now study the modular transformations of $\Psi$ appearing in the $A_2$ parafermion  character. This contains a two-dimensional false theta function and is the more interesting part of the character. The first step is to apply Lemma \ref{SignLemma} and rewrite $\Psi$ in a more appropriate form to analyze modular properties. To give this statement, we consider the function
\begin{equation*}
h (\bm{w} )  := \vth_{3,1}^{[1]} (w_1) \vth_{1,1} (w_2) - \vth_{3,2}^{[1]} (w_1) \vth_{1,0} (w_2) 
\end{equation*}
and also define the following regularized integral for $w_1 \in \IH \setminus \{\t\}$:
\begin{equation*}
\rint{\t}{w_1} \frac{f(w_2)}{(i (w_2-\t))^{\frac{3}{2}}} dw_2 := 
\lim_{\zz \to \t}  \lp
\int_\zz^{w_1} \frac{f(w_2)}{(i (w_2-\t))^{\frac{3}{2}}} dw_2+ 2 i \frac{f(\t)}{\sqrt{i(\zz-\t)}}
\rp ,
\end{equation*}
where both the integral and the one-sided limit are taken along the hyperbolic geodesic from $\t$ to $w$. Now, one could   deform the path of integration away from the hyperbolic geodesic and provided that the contour does not cross the branch point at $w_2 = \t$, the value of the regularized integral is maintained thanks to the holomorphy of the integrand. The choice for the path here gives a concrete way to compute the integral while working with the principal value of the square root and moreover is quite convenient for studying the modular transformation properties we encounter in this paper. In fact, for the remainder of this paper we assume that all similar (iterated) integrals in the upper half-plane, including the one-sided limits involved in the regularization, are taken along hyperbolic geodesics.

\begin{prop}\label{prop:Psi_integral_form}
We have
\begin{equation*}
\Psi (\t) = \frac{\sqrt{3}}{2 \pi} \int_\tau^{\tau +i\infty} \rint{\tau}{w_1} 
\frac{h(\bm{w})}
{\sqrt{i(w_1-\tau)} (i(w_2-\tau))^{\frac 32}} dw_2  dw_1 .
\end{equation*}
\end{prop}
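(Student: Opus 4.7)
The plan is to apply Lemma~\ref{SignLemma} to each summand of $\Psi$ with the diagonalization $\ell_1=\sqrt{3/2}\,n_1$, $\ell_2=(2n_2+n_1)/\sqrt{2}$, $\kappa=-1/\sqrt{3}$, which gives $\ell_1^2+\ell_2^2=2Q_A(\bm n)$ and $\sgn(\ell_1)\sgn(\ell_2+\kappa\ell_1)=\sgn(n_1)\sgn(n_2)$, with dual pair $m_1=\sqrt{3/2}\,n_2$, $m_2=(2n_1+n_2)/\sqrt{2}$. Multiplying by $n_1$ and summing over $\bm n\in\IZ^2+(1/3,1/3)$ reproduces $\Psi$ on the left-hand side; the rational correction from Lemma~\ref{SignLemma} involves $\sum_{\bm n}n_1 q^{Q_A(\bm n)}$, which vanishes by the Weyl reflection $(n_1,n_2)\mapsto(-n_1-n_2,n_2)$ combined with the symmetry $n_1\leftrightarrow n_2$ on the shifted $A_2^*$-coset. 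This produces a double integral $\int_\tau^{\tau+i\infty}\int_\tau^{w_1}(F_1+F_2)/(\sqrt{i(w_1-\tau)}\sqrt{i(w_2-\tau)})\,dw_2 dw_1$.

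Next, I would symmetrize $F_2$ under $n_1\leftrightarrow n_2$ to align its exponent with that of $F_1$, giving the combined coefficient $n_1(n_1+n_2)(2n_2+n_1)$. The substitution $a=n_1$, $b=2n_2+n_1$ decouples the exponent as $\tfrac{3\pi i}{2}a^2 w_1+\tfrac{\pi i}{2}b^2 w_2$ and rewrites the cubic as $\tfrac{1}{2}ab(a+b)$. The lattice splits into two parity classes; after rescaling $(n_1',n_2'):=(a/2,b/2)$, one class has $n_1'\in\IZ+1/6$, $n_2'\in\IZ+1/2$, while the other has $n_1'\in\IZ+2/3$, $n_2'\in\IZ$ (which folds to $\vth_{3,2}^{[k]}$ via $n_1'\mapsto -n_1'$, producing a sign $(-1)^k$). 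Expanding $ab(a+b)/2$ as $(n_1')^2n_2'+n_1'(n_2')^2$ and using the antisymmetries $\vth_{1,0}^{[1]}\equiv\vth_{1,1}^{[1]}\equiv 0$ (from $n\mapsto -n$ on $\IZ$ and $\IZ+1/2$) to kill two of the four products, what survives is
\begin{equation*}
F_1+F_2 = 2\sqrt{3}\bigl[\vth_{3,1}^{[1]}(w_1)\vth_{1,1}^{[2]}(w_2)-\vth_{3,2}^{[1]}(w_1)\vth_{1,0}^{[2]}(w_2)\bigr] = \frac{\sqrt{3}}{\pi i}\,\partial_{w_2}h(\bm w),
\end{equation*}
using $\vth_{1,r'}^{[2]}=\tfrac{1}{2\pi i}\partial_{w_2}\vth_{1,r'}$.

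Finally, I would match the resulting iterated integral with the claimed regularized form. A direct computation from the definition of $\rint{}$ gives
\begin{equation*}
\rint{\tau}{w_1}\frac{h(\bm w)}{(i(w_2-\tau))^{3/2}}\,dw_2 = \frac{2i\,h(w_1,w_1)}{\sqrt{i(w_1-\tau)}}-2i\int_\tau^{w_1}\frac{\partial_{w_2}h(\bm w)}{\sqrt{i(w_2-\tau)}}\,dw_2,
\end{equation*}
where the regularization exactly cancels the singular boundary contribution at $w_2=\tau$. Substituting this into the right-hand side of the proposition and matching numerical factors reduces the statement to the vanishing $h(w,w)\equiv 0$, that is, the theta identity $\vth_{3,1}^{[1]}(w)\vth_{1,1}(w)=\vth_{3,2}^{[1]}(w)\vth_{1,0}(w)$. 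This is the main obstacle: I would verify it either by comparing sufficiently many $q$-expansion coefficients against the known finite-dimensional space of weight-$2$ forms on the relevant congruence subgroup, or by a direct bijection between the two cosets $(\IZ+1/6)\times(\IZ+1/2)$ and $(\IZ+1/3)\times\IZ$ weighted by $n_1$ under the form $3n_1^2+n_2^2$. Given this identity, the remaining manipulations are routine.
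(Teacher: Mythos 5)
Your proposal is correct and follows the paper's route exactly: the paper's (one-line) proof is precisely your argument, namely Lemma~\ref{SignLemma} applied with this diagonalization, followed by integration by parts in $w_2$ against $(i(w_2-\tau))^{-\frac32}$, using $h(w_1,w_1)=0$ to kill the boundary term absorbed by the regularization. The only step you leave open --- the identity $\vth_{3,1}^{[1]}(w)\vth_{1,1}(w)=\vth_{3,2}^{[1]}(w)\vth_{1,0}(w)$, which the paper likewise asserts without proof --- needs neither a Sturm-bound check nor a new bijection: running your own parity-class decomposition with weight $n_1'$ in place of the cubic weight gives $2h(w,w)=\sum_{\bm n\in\IZ^2+\left(\frac13,\frac13\right)} n_1 e^{2\pi i Q_A(\bm n)w}$, which is exactly the sum you already proved vanishes via the reflection $(n_1,n_2)\mapsto(-n_1-n_2,n_2)$ together with the swap $n_1\leftrightarrow n_2$ when disposing of the $\arctan(\kappa)$ correction term.
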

\begin{proof}
The claim follows from Lemma \ref{SignLemma} and integration by parts noting that $h(w_1,w_1) = 0$. 
\end{proof}

We now define  the completion of $\Psi$ as a function on $\mathbb{H} \times \mathbb{H}$ by
\begin{equation*}
\wh{\Psi} (\t,w) := \frac{\sqrt{3}}{2 \pi} \int_\tau^{w} \rint{\tau}{w_1} 
\frac{h(\bm{w})}
{\sqrt{i(w_1-\tau)} (i(w_2-\tau))^{\frac 32}} dw_2  dw_1,
\end{equation*}
so that, with the limit taken to be vertical
\begin{equation*}
\Psi (\t) = \lim_{w \to \t +i \infty} \wh{\Psi} (\t,w).
\end{equation*}
Note that, unlike the one-dimensional false theta functions studied in \cite{BN} (where a cut-plane is used for the domain of $w$), the integral to  $i \infty$ can be taken in any direction as long as the same branch of square-root is used for both half-integral powers in the integrand.

\begin{prop}
\label{prop:Psi_modular_properties}
For $M = \left(\begin{smallmatrix}
a & b\\ c & d
\end{smallmatrix}\right)\in \operatorname{SL}_2(\Z)$ we have
\begin{equation*}
\wh{\Psi}  \lp \frac{a \t + b}{c \t + d}, \frac{a w + b}{c w + d} \rp
=
\nu_{\eta}(M)^{8} (c\t +d)^2 \, \wh{\Psi}(\t,w).
\end{equation*}
\end{prop}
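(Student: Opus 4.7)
The plan is to reduce the claim to the generators $T := \bigl(\begin{smallmatrix} 1 & 1 \\ 0 & 1 \end{smallmatrix}\bigr)$ and $S := \bigl(\begin{smallmatrix} 0 & -1 \\ 1 & 0 \end{smallmatrix}\bigr)$ of $\SL_2(\IZ)$, since both sides of the claimed identity satisfy the same cocycle relation in $M$.

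For $T$: substituting $w_j \mapsto w_j + 1$ in the integration variables of $\wh{\Psi}(\tau+1, w+1)$ leaves $w_j - (\tau+1)$ (and hence the square-root factors) and the integration domains unchanged. Lemma~\ref{lem:theta_onedim_modular_transf}(1) applied to $\vth_{3,r}$ and $\vth_{1,r}$ gives $\vth_{3,1}^{[1]}(\tau+1) = e^{\pi i/6}\vth_{3,1}^{[1]}(\tau)$, $\vth_{3,2}^{[1]}(\tau+1) = e^{2\pi i/3}\vth_{3,2}^{[1]}(\tau)$, $\vth_{1,1}(\tau+1) = e^{\pi i/2}\vth_{1,1}(\tau)$, and $\vth_{1,0}(\tau+1) = \vth_{1,0}(\tau)$; differentiating the $z$-dependent transformation law at $z = 0$ preserves the root-of-unity factor. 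Both summands of $h$ thus acquire the common phase $e^{2\pi i/3} = \nu_\eta(T)^8$, which gives the claim for $T$ (with $(c\tau+d)^2 = 1$).

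For $S$: the approach is to perform the change of variables $w_j = -1/s_j$ in the double integral defining $\wh{\Psi}(-1/\tau, -1/w)$. The outer and inner limits transform to $\tau \to w$ and $\tau \to s_1$, the Jacobian contributes $ds_j / s_j^2$, and the identity $w_j - (-1/\tau) = (s_j - \tau)/(s_j\tau)$ factorises the square-root denominators (with suitable branches) as $\sqrt{s_1\tau}/\sqrt{i(s_1-\tau)}$ and $(s_2\tau)^{3/2}/(i(s_2-\tau))^{3/2}$. Differentiating the elliptic transformation of Lemma~\ref{lem:theta_onedim_modular_transf}(1) in $z$ and setting $z = 0$ yields
\begin{equation*}
\vth_{m,r}^{[1]}\!\lp -\frac{1}{\tau} \rp = \tau \cdot \frac{\sqrt{-i\tau}}{\sqrt{2m}} \sum_{\ell \pmod{2m}} e^{-\pi i r\ell/m} \vth_{m,\ell}^{[1]}(\tau),
\end{equation*}
and this together with the analogous transformation of $\vth_{1,r}$ expresses $h(-1/s_1, -1/s_2)$ as a double sum over $(\ell_1, \ell_2) \in \IZ/6\IZ \times \IZ/2\IZ$ of products $\vth_{3,\ell_1}^{[1]}(s_1) \vth_{1,\ell_2}(s_2)$ with explicit root-of-unity coefficients. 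A short computation using $\vth_{3,-r}^{[1]} = -\vth_{3,r}^{[1]}$ and $\vth_{3,0}^{[1]} = \vth_{3,3}^{[1]} = 0$ collapses this linear combination back to $2i\sqrt{3}\,h(s_1,s_2)$, so that modulo branch phase $h(-1/s_1,-1/s_2) = s_1^{3/2} s_2^{1/2}\, h(s_1,s_2)$. Combining this with the Jacobian and the factorised square roots, every power of $s_1$ and $s_2$ cancels and an overall $\tau^2$ emerges, so the right-hand side equals $\tau^2\, \wh{\Psi}(\tau, w)$. Since $\nu_\eta(S)^8 = e^{-2\pi i} = 1$ and $(c\tau + d)^2 = \tau^2$, this matches the claim.

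The main obstacle I anticipate is branch bookkeeping: one must fix branches of $\sqrt{i(s_j - \tau)/(s_j\tau)}$, $\sqrt{-is_j}$, and $\sqrt{-i\tau}$ consistently across the change of variables and the theta transformation, and verify that the regularisation near $s_2 = \tau$ commutes with the substitution $w_2 = -1/s_2$. The convention that all integrals are taken along hyperbolic geodesics, which are preserved by $\SL_2(\IZ)$, should make this clean, and the holomorphy of the integrand away from $w_2 = \tau$ ensures that the regularised value is preserved.
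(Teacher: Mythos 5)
Your proposal is correct and takes essentially the same route as the paper: reduction to the generators $S$ and $T$, the transformation $h\lp -\frac{1}{w_1},-\frac{1}{w_2}\rp = w_1^{\frac32}w_2^{\frac12}\,h(\bm w)$ obtained from Lemma \ref{lem:theta_onedim_modular_transf} (your collapse constant $2i\sqrt{3}$ checks out), the power counting that cancels all $s_1,s_2$ factors and leaves $\tau^2$, and the multipliers $\nu_\eta(T)^8 = e^{\frac{2\pi i}{3}}$, $\nu_\eta(S)^8 = 1$. The single obstacle you flag---compatibility of the regularisation with $w_2 = -\frac{1}{s_2}$ and the branch bookkeeping---is exactly what the paper's proof spends its effort on: it first rewrites $\wh{\Psi}$ in the equivalent form \eqref{eq:Psihat_alternative_form}, replacing the correction term $2i\,h(w_1,\t)/\sqrt{i(\zz-\t)}$ by $2i\,h(w_1,\zz)\frac{\zz-w_1}{\t-w_1}/\sqrt{i(\zz-\t)}$ (the difference is $O\lp\sqrt{\zz-\t}\rp$, so the value is unchanged) so that every term, including the regularising one, transforms covariantly under simultaneous inversion of $w_1, w_2, \zz$, and it controls the principal-branch signs via the factor $\chi_{\t_1,\t_2} \in \{\pm 1\}$, which is constant along the hyperbolic geodesic from $\t$ to $w$---confirming your expectation that geodesic invariance under M\"obius maps makes the argument clean.
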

\begin{proof}
It suffices to prove the statement for translation and inversion, in which case the claim is 
\begin{equation}\label{reduces}
\wh{\Psi} (\t+1,w+1) = e^{\frac{2\pi i}{3}} \wh{\Psi} (\t,w)
\qquad \mbox{and} \qquad
\wh{\Psi}  \lp -\frac{1}{\t}, -\frac{1}{w} \rp  = \tau^2\wh{\Psi}  \lp \t, w \rp .
\end{equation}

We first recall that the integrals in $w_1$ and $w_2$ (as well as the one-sided limit used in regularizing the integral) are taken along the hyperbolic geodesic from $\tau$ to $w$, i.e., along the unique circle with a real center containing $\tau$ and $w$ or along the straight vertical line from $\tau$ to $w$ if $\im{(\tau)}=\im{(w)}$. Then, we modify $\wh{\Psi} (\t,w)$ in the following way without changing its value
\begin{equation}\label{eq:Psihat_alternative_form}
\wh{\Psi} (\t,w) = \frac{\sqrt{3}}{2 \pi} \int_\tau^{w}  \frac{1}{\sqrt{i(w_1-\tau)}}
\lim_{\zz \to \t} \lp 
\int_{\zz}^{w_1}  \frac{h(\bm{w})}{(i(w_2-\tau))^{\frac 32}} dw_2
+ 2 i  \frac{h(w_1,\zz)}{\sqrt{i(\zz - \t)}} \frac{\zz - w_1}{\t - w_1}
\rp d w_1 .
\end{equation}
In this form, the modular transformation properties may be concluded by the following modular transformations for $h(\bm{w})$, which can be deduced from Lemma \ref{lem:theta_onedim_modular_transf}:
\begin{equation*}
h(\bm w+(1,1)) = e^{\frac{2\pi i}{3}} h(\bm{w}) 
\quad \mbox{and} \quad
h \lp -\frac{1}{w_1}, -\frac{1}{w_2} \rp = w_1^{\frac{3}{2}} w_2^{\frac{1}{2}} h(\bm{w}) .
\end{equation*}
In fact, to work with $\wh{\Psi} ( -\frac{1}{\t}, -\frac{1}{w} )$, we change variables as $w_1 \mapsto -\frac{1}{w_1}, w_2 \mapsto -\frac{1}{w_2}, z \mapsto -\frac{1}{z}, \zz \mapsto -\frac{1}{\zz}$ in equation \eqref{eq:Psihat_alternative_form}.
Note that integration and limit are originally taken along the geodesic from $-\frac{1}{\t}$ to $-\frac{1}{w}$, and the transformations map them to be on the geodesic from $\t$ to $w$.
We then find 
\begin{align*}
&\wh{\Psi}  \lp -\frac{1}{\t}, -\frac{1}{w} \rp = \frac{\sqrt{3}}{2 \pi} 
\int_{\tau}^{w} \frac{\chi_{w_1,\t} \sqrt{w_1} \sqrt{\t} }{\sqrt{i(w_1-\tau)}} 
\\ & \qquad \qquad
\times\lim_{\zz \to \tau} \Bigg(  \int_{\zz}^{w_1}
\chi_{w_2,\t} 
 \frac{w_2^{\frac32} \t^{\frac32} h\lp -\frac{1}{w_1}, -\frac{1}{w_2} \rp}{(i(w_2-\tau))^\frac32}\frac{dw_2}{w_2^2} 
+2i \chi_{\zz,\t}  \frac{\zz^{\frac12} \t^{\frac12} h\lp -\frac{1}{w_1}, -\frac{1}{\zz} \rp}{\sqrt{i(\zz-\tau)}}
\frac{\zz-w_1}{\tau -w_1}
\frac{\t}{\zz}
\Bigg) \frac{dw_1}{w_1^2} ,
\end{align*}
where 
\begin{equation*}
\chi_{\t_1,\t_2} :=
 \sqrt{ \frac{i (\t_1-\t_2)}{\t_1 \t_2} }
 \frac{\sqrt{\t_1} \sqrt{\t_2}}{\sqrt{i (\t_1-\t_2)}} \in \{ -1,+1 \}
\quad \mbox{for } \t_1,\t_2 \in \mathbb{H}
\end{equation*}
keeps track of signs required to work with the principal value of the square-root. Crucially, along the geodesic from $\tau$ to $w$ we have $\chi_{w_1,\t}=\chi_{w_2,\t}=\chi_{\zz,\t}=\chi_{w,\t}$. Using this fact as well as the inversion properties of $h$ we obtain the second identity in \eqref{reduces}.

A similar and easier computation yields the translation property.
\end{proof}

The one-dimensional false theta functions appearing in $G_0$ can be treated as in \cite{BN} by following a similar strategy to Propositions \ref{prop:Psi_integral_form} and \ref{prop:Psi_modular_properties} and using the regularized integral defined there. Its quantum modularity can be studied by a slight adjustment of the argument in \cite[Theorem 1.5]{BN}.

\section{Generic Parafermionic Characters of type $B_2$} \label{sec:B_2_parafermion}
\subsection{Expression in terms of false functions} 
Our next goal is to study parafermionic characters associated to the affine Lie algebra of type $B_2$, which has the positive roots 
\begin{equation*}
\left\{
\alpha_1=\left(\begin{matrix} 1 \\0 \end{matrix}\right), \ 
 \alpha_2=\left(\begin{matrix} 0 \\1 \end{matrix}\right), \ 
\alpha_1+\alpha_2, \ 
2 \alpha_1+\alpha_2\right\}.
\end{equation*}
 In particular, specializing equations \eqref{paraf} and \eqref{par_ct} to the case of $B_2$, so that $\mathfrak{g}=\mathfrak{so}_5$, we study the constant term of 
\begin{equation*}
F(\bm\z) := q^{\frac{10 k}{24(k+3)}}(q;q)_\infty^{2} {\rm ch}[V_k(\mathfrak{so}_5)]({\bm \zeta};q)=
\frac{1}{\left(\zeta_1q,\zeta_1^{-1}q,\zeta_2q,\zeta_2^{-1}q,\zeta_1\zeta_2q,\zeta_1^{-1}\zeta_2^{-1}q,\zeta_1^2 \zeta_2q,\zeta_1^{-2}\zeta_2^{-1}q ;q\right)_\infty}.
\end{equation*}
Using equation \eqref{JT} we obtain 
\begin{equation*}
F(\bm\z) = q^{\frac{1}{3}} \eta^4
\frac{\zeta_1^{-2} \zeta_2^{-\frac{3}{2}}(1-\zeta_1)(1-\zeta_2)(1-\zeta_1\zeta_2)\left(1-\zeta_1^2\zeta_2\right)}{\vth(z_1)\vth(z_2)\vth(z_1+z_2)\vth(2z_1+z_2)} .
\end{equation*}
Now, to state our main result for the associated character, we first require some notation. Using the quadratic form  $Q_B(\bm n):=\frac 32n_1^2+3n_1n_2+3n_2^2$, we define
\begin{equation*}
\Phi (\t) := \Phi_1 (\t) + \Phi_2 (\t),
\end{equation*}
where
\begin{align*}
\Phi_1 (\t) &:=\sum_{\bm n\in\Z^2+\left(\frac 13,\frac 16\right)} (-1)^{n_1-\frac 13}
\lp \sgn(n_2) + \sgn (n_1 + n_2) \rp \sgn(n_1) 
 \lp \lp n_1+2n_2 \rp^2 - \frac{E_2 (\t)}{18} \rp
q^{Q_B(\bm n)},
\\
\Phi_2 (\t) &:=\sum_{\bm n\in\Z^2+\left(\frac 13,\frac 16\right)} (-1)^{n_1-\frac 13}
\sgn (n_1+n_2) \sgn (n_2)
 n_1(n_1+2n_2)
q^{Q_B(\bm n)} .
\end{align*}
Then for $\bm a \in\mathbb Z^2$, we let 
\begin{equation*}
\LL_{\bm a} (\t) :=\sum_{\bm n\in \mathbb Z^2+\left(\frac{1}{3},\frac{a_1}{2}+\frac{1}{6}\right)}(-1)^{(a_2+1)\left(n_1-\frac{1}{3}\right)} \sgn(n_1)\sgn(n_1+2n_2)q^{Q_B(\bm n)} .
\end{equation*}
We also need the following one-dimensional false theta functions:
\begin{equation*}
\phi_{r} (\t) :=  \sum_{n \in \Z+\frac{r}{6}} \sgn (n) q^{3 n^2}
\quad \mbox{and} \quad
\w_{r} (\t) :=  
\sum_{n \in \Z+\frac{r}{3} + \frac{1}{2}} (-1)^{n-\frac{r}{3}-\frac{1}{2}} \sgn (n)  q^{\frac{3n^2}{2}}
\end{equation*}
to define 
\begin{align*}
F_0 (\t) :=&  \frac{E_2 (\t)+2}{4} + \frac{\eta(\tau)^6}{\vth \lp \frac{1}{2}; \tau \rp^2}
+ 6 q^{-\frac{1}{24}} \DDD_{\frac{1}{2}}( \w_1 (\t))
- 6 q^{-\frac{3}{8}}  \DDD_{\frac{1}{2}} (\w_0 (\t))
\\ &
+q^{-\frac{1}{12}} \lp 6 \DDD_{\frac{1}{2}} 
- \frac{\eta(\tau)^6}{\vth \lp \frac{1}{2}; \tau \rp^2} 
+  q^{-\frac{1}{2}} \frac{\eta(\tau)^6}{\vth \lp \frac{\t}{2}; \tau \rp^2} 
- q^{-\frac{1}{2}}  \frac{\eta(\tau)^6}{\vth \lp \frac{\tau+1}{2}; \tau \rp^2}
\rp (\phi_1 (\t))
\\ &
- q^{-\frac{1}{3}} \lp 6  \DDD_{\frac{1}{2}} 
+ \frac{\eta(\tau)^6}{\vth \lp \frac{1}{2}; \tau \rp^2} 
+  \frac{\eta(\tau)^6}{\vth \lp \frac{\t}{2}; \tau \rp^2} 
+ \frac{\eta(\tau)^6}{\vth \lp \frac{\tau+1}{2}; \tau \rp^2}
\rp( \phi_2 (\t)).
\end{align*} 
With these definitions at hand, we can give our result.
\begin{prop}\label{prop_const_term_B2}
In the range  $|q| < |\zeta_1^2|, |\zeta_2|,  |\zeta_1 \zeta_2 |,  |\zeta_1^2 \zeta_2 | <1$, we have
\begin{align*}
\operatorname{CT}_{[\bm\z]}(F(\bm\z)) &=
\frac{q^{\frac{1}{3}}}{\eta(\tau)^{8}}F_0(\t)
+\frac{9 q^{-\frac{1}{12}} }{2 \eta(\tau)^{8}} \Phi (\t)
+\frac{q^{-\frac{1}{12}}}{\eta(\tau)^2} \lp
\frac{ \LL_{0,1}(\t)}{\vth \lp \frac{1}{2}; \tau \rp^2}
+ \frac{q^{-\frac{1}{4}} \LL_{1,0}(\t)}{\vth \lp \frac{\tau}{2}; \tau \rp^2}
- \frac{q^{-\frac{1}{4}} \LL_{1,1}(\t)}{\vth \lp \frac{\tau+1}{2}; \tau \rp^2}
\rp
\\ & 
= 1 + 4q^2 + 12 q^3 + 38 q^4 + 100 q^5 + 276 q^6
+688 q^7 + 1709 q^8 + 4020 q^9  + O\left(q^{10}\right).
\end{align*}
\end{prop}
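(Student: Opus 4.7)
The plan mirrors the strategy of Proposition \ref{prop_const_term_A2}, adapted to the four-theta-factor denominator of $T_B$. First, I factor
\begin{equation*}
F(\bm\z) = q^{\frac{1}{3}}\eta^4 T_B(\bm z)\cdot \zeta_1^{-2}\zeta_2^{-\frac{3}{2}}(1-\zeta_1)(1-\zeta_2)(1-\zeta_1\zeta_2)\lp 1-\zeta_1^2\zeta_2\rp,
\end{equation*}
expand the numerator into its $16$ monomials $\pm \zeta_1^{s_1}\zeta_2^{s_2}$ with $s_1\in\Z$ and $s_2\in\Z+\frac{1}{2}$, and reduce the problem to computing a signed sum over an appropriate finite set $\CS_B\subset\Z\times\lp \Z+\frac{1}{2}\rp$ of the constant terms
\begin{equation*}
E(\bm s) := \mathrm{CT}_{[\bm\z]}\lp \frac{\zeta_1^{s_1}\zeta_2^{s_2}}{\vt(z_1)\vt(z_2)\vt(z_1+z_2)\vt(2z_1+z_2)}\rp.
\end{equation*}

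Second, to evaluate each $E(\bm s)$, I apply Lemma \ref{prop:triple_theta_identity2} to the three $z_1$-dependent theta factors after the substitution $z_1\mapsto z_1-\frac{z_2}{2}$, which recasts $\vt(z_1)\vt(z_1+z_2)\vt(2z_1+z_2)$ in the form $\vt(2z)\vt(z+w_1)\vt(z+w_2)$ with $(w_1,w_2)=\lp -\frac{z_2}{2},\frac{z_2}{2}\rp$ and $r=s_1$. This produces a six-term decomposition consisting of two \emph{boundary} contributions whose denominators reduce to $\vt(z_2)^2$ (since $\vt(2w_j)\vt(w_j-w_{3-j})=\pm\vt(z_2)^2$), and four \emph{interior} contributions indexed by $(\ell_1,\ell_2)\in\{0,1\}^2$ whose denominators involve $\vt\lp -\frac{z_2}{2}+\xi_{\ell_1,\ell_2}\rp \vt\lp \frac{z_2}{2}+\xi_{\ell_1,\ell_2}\rp$ with $\xi_{\ell_1,\ell_2}:=\frac{\ell_1\tau+\ell_2}{2}$. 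Taking the constant term in $z_1$ via the geometric expansion of each $\lp 1-\alpha q^n\rp^{-1}$ and identity \eqref{eq:constant_term} yields two-dimensional $q$-series.

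Third, I perform the constant-term extraction in $z_2$. For each $(\ell_1,\ell_2)\neq(0,0)$, the prefactor $\vt\lp \xi_{\ell_1,\ell_2}\rp^2$ is nonzero and $z_2$-independent; through the elliptic relations \eqref{JE}, this produces exactly the three denominators $\vt(\frac{1}{2})^2,\vt(\frac{\tau}{2})^2,\vt(\frac{\tau+1}{2})^2$ appearing in $F_0$, while the remaining one-dimensional $q$-series assemble (after index shifts and sign-reflections analogous to those carried out in the proof of Proposition \ref{prop_const_term_A2}) into the $\LL_{\bm a}$ combinations and the $\phi_r$-pieces of $F_0$. The $(\ell_1,\ell_2)=(0,0)$ interior term together with the two boundary terms carries a triple pole at $z_2=0$; its constant-term extraction is handled by an extension of Lemma \ref{cor:theta_squared_identity} obtained by Laurent-expanding $\frac{1}{\vt(z_2)^3}$ (respectively $\frac{1}{\vt(z_2)\vt(z_2/2)^2}$ for the $(0,0)$ case) about $0$, using that the subleading coefficients are proportional to $E_2$. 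This yields the quadratic factors $(n_1+2n_2)^2$ and $n_1(n_1+2n_2)$ in $\Phi_1,\Phi_2$ as well as the $-\frac{E_2(\tau)}{18}$ correction in $\Phi_1$. After converting the residual quasi-modular shifts into Ramanujan--Serre derivatives, the leftover one-dimensional false-theta contributions collapse into the $\DDD_{\frac{1}{2}}(\w_r)$ and $\DDD_{\frac{1}{2}}(\phi_r)$ terms of $F_0$.

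The main obstacle is combinatorial. Starting from $16$ monomials, Lemma \ref{prop:triple_theta_identity2} fans each out into $6$ pieces, and the subsequent constant-term extractions produce a further proliferation of one- and two-dimensional sums with various phases, summation shifts, and prefactors. Reflecting and shifting summation variables to recognize the $\sgn$-patterns defining $\Phi_1,\Phi_2,\LL_{\bm a}$, correctly pairing the signs $(-1)^{\ell_1+\ell_2+s_1\ell_2}$ and the $q^{\ell_1(\ell_1+s_1)/2}$-shifts from Lemma \ref{prop:triple_theta_identity2} with the half-integrality of $s_2$, and tracking the precise coefficient with which $E_2$ enters from the triple-pole Laurent expansion, are where the bulk of the work lies.
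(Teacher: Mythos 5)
There is a genuine gap in your third step. After your substitution, the four ``interior'' terms of Lemma \ref{prop:triple_theta_identity2} carry the denominators $\vt\lp -\frac{z_2}{2}+\xi_{\bm\ell}\rp\vt\lp \frac{z_2}{2}+\xi_{\bm\ell}\rp$ with $\xi_{\bm\ell}=\frac{\ell_1\tau+\ell_2}{2}$ --- exactly as you record in your second step --- and these are \emph{not} $z_2$-independent; your third step silently replaces them by $\vt\lp\xi_{\bm\ell}\rp^2$, i.e., evaluates them at $z_2=0$. That is not permissible: the constant term in $\zeta_2$ is an average over a circle $|\zeta_2|=\mathrm{const}$, not an evaluation at a point, so these factors must themselves be expanded in $\zeta_2$ on the relevant annulus (they contribute poles at $z_2\equiv \mp(\ell_1\tau+\ell_2) \pmod{2\Z\tau+2\Z}$), producing further lattice sums that your accounting omits; as written, the computation would not yield the $\LL_{\bm a}$-terms correctly. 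The same confusion infects your treatment of the triple pole: a Laurent expansion of $\frac{1}{\vt(z_2)^3}$, or of $\frac{1}{\vt(z_2)\vt\lp \frac{z_2}{2}\rp^2}$, about $z_2=0$ describes the function only near one point, whereas constant-term extraction needs a partial-fraction expansion valid on the whole annulus, with all terms of the shape $\frac{1}{(1-\zeta_2 q^n)^k}$. For $\frac{\zeta^r}{\vt(z)^3}$ such an expansion exists (Lemma \ref{cor:thetacubed_inverse}), but for $\frac{1}{\vt(z_2)\vt\lp\frac{z_2}{2}\rp^2}$, which also has higher-order poles at the points of $2\Z\tau+2\Z$, no such lemma is available in the paper and you would have to prove one.

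The paper's proof avoids all of this by eliminating the variables in the opposite order. It first applies Lemma \ref{prop:triple_theta_identity} in $z_2$, with $(z,w_1,w_2,r)=(z_2,z_1,2z_1,r_2)$ (note that $r_2\in\Z+\frac12$ matches the hypothesis there), disposing of the $z_2$-constant term via \eqref{eq:constant_term} and leaving only the $z_1$-denominators $\vt(z_1)^2\vt(2z_1)$ and $\vt(z_1)^3$. These are then expanded by Lemmas \ref{cor:thetacubed_inverse2} and \ref{cor:thetacubed_inverse}, which are derived by setting $\bm w=(w,-w)$ and letting $w\to 0$ via Lemma \ref{lem:const_term_thetadiv} (whence the $E_2$-corrections you correctly anticipate). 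In that derivation the theta constants $\vt\lp\frac{\ell_1\tau+\ell_2}{2}\rp^2$ appear legitimately, because $w$ is a free parameter of an identity rather than an elliptic variable whose Fourier coefficient is still pending; this is precisely the step your ordering does not permit. So while your two-stage partial-fraction strategy, your identification of the sources of $E_2$ and of the theta-constant denominators, and your reduction to a signed sum over $\mathcal{S}_B$ are all in the right spirit, the proposal needs either the paper's order of elimination or a substantial additional expansion lemma for the interior terms. (A minor further slip: of the $16$ monomials of the numerator, $8$ cancel in pairs, which is why $\mathcal{S}_B$ has only eight elements.)
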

To prove Proposition \ref{prop_const_term_B2}, we require several preliminary results based on Lemmas \ref{prop:triple_theta_identity} and \ref{prop:triple_theta_identity2}. The first of these is an auxiliary statement that helps us study various limits of the two lemmas and it follows from equation \eqref{diffT} and the identity $\vartheta^{(3)}(0)=2\pi^3 \eta^3E_2$.
\begin{lem}\label{lem:const_term_thetadiv}
For a function $F$ that is holomorphic in a neighborhood of $w=0$ and for $a,b \in \R$ we have, as $w \to 0$
\begin{equation*}\label{eq:const_term_thetadiv}
\frac{F(w)}{\vartheta(aw)\vartheta(bw)}=\frac{1}{4\pi^2ab \eta^6}
\lp \frac{F(0)}{w^2} + \frac{F'(0)}{w} \rp 
+
\lp   \frac{a^2+b^2}{ab} \frac{E_2}{24\eta^6}F(0) 
+\frac{F''(0)}{8\pi^2 ab \eta^6} \rp
+O(w).
\end{equation*}
\end{lem}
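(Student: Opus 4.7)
The plan is a direct Laurent expansion at $w=0$. Since $\vartheta$ is odd, its Taylor series at the origin contains only odd powers; the given data \eqref{diffT} supply $\vartheta'(0)=-2\pi\eta^3$, and the stated identity $\vartheta^{(3)}(0)=2\pi^3\eta^3 E_2$ supplies the next nonzero coefficient. Writing
\[
\vartheta(aw)=-2\pi\eta^3(aw)\left[1-\tfrac{\pi^2a^2w^2}{6}E_2+O(w^4)\right],
\]
and similarly for $\vartheta(bw)$, I would multiply the two expansions to obtain
\[
\vartheta(aw)\vartheta(bw)=4\pi^2\eta^6 ab\,w^2\left[1-\tfrac{\pi^2(a^2+b^2)w^2}{6}E_2+O(w^4)\right].
\]

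Next I would invert this series in the usual way, getting
\[
\frac{1}{\vartheta(aw)\vartheta(bw)}=\frac{1}{4\pi^2\eta^6 ab\,w^2}\left[1+\tfrac{\pi^2(a^2+b^2)w^2}{6}E_2+O(w^4)\right].
\]
Since the bracket is even in $w$ through order $w^2$, the second-order terms behave cleanly.

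Finally, I would Taylor expand $F(w)=F(0)+F'(0)w+\tfrac12 F''(0)w^2+O(w^3)$ and multiply the two expansions. The $w^{-2}$ and $w^{-1}$ coefficients collect to $\frac{F(0)}{4\pi^2ab\eta^6 w^2}+\frac{F'(0)}{4\pi^2ab\eta^6 w}$, while the constant term combines the $F''(0)$ contribution from $F$ with the $E_2$ contribution from the $\vartheta$-product, yielding
\[
\frac{F''(0)}{8\pi^2ab\eta^6}+\frac{a^2+b^2}{ab}\cdot\frac{E_2}{24\eta^6}F(0),
\]
which matches the claim. There is no real obstacle here; the only point requiring a little care is bookkeeping the sign of $\vartheta'(0)$ and the coefficient $\vartheta^{(3)}(0)=2\pi^3\eta^3 E_2$ so that the overall factor of $-2\pi\eta^3$ squares correctly to $+4\pi^2\eta^6$ in the denominator. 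Once that is tracked, the lemma follows immediately from the product of three truncated Taylor series.
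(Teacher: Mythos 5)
Your proposal is correct and is essentially the paper's own argument: the paper proves this lemma in one line by citing $\vartheta'(0)=-2\pi\eta^3$ (equation \eqref{diffT}) together with $\vartheta^{(3)}(0)=2\pi^3\eta^3E_2$, which is exactly the third-order Taylor expansion of the odd function $\vartheta$ that you carry out and invert. Your bookkeeping checks out, including the sign of $\vartheta'(0)$ squaring to $+4\pi^2\eta^6$ and the constant term $\frac{F''(0)}{8\pi^2 ab\eta^6}+\frac{a^2+b^2}{ab}\frac{E_2}{24\eta^6}F(0)$.
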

The next two results are then two particular limits of Lemmas \ref{prop:triple_theta_identity} and \ref{prop:triple_theta_identity2}, respectively, that appear in the proof of Proposition \ref{prop_const_term_B2}. Taking $\bm w=(w,-w)$ in Lemma \ref{prop:triple_theta_identity} and then letting $w\to 0$ using Lemma \ref{lem:const_term_thetadiv}, yields the following statement.
\begin{lem}
\label{cor:thetacubed_inverse}
For $r \in \Z+\frac{1}{2}$ we have
\begin{equation*}
\frac{\zeta^r}{\vth (z)^3}   = 
- \frac{i}{\eta^9} 
\sum_{n \in \Z} (-1)^n q^{\frac{3n^2}{2}- rn} \left(
\frac{4(3n-r-1)^2-E_2}{8(1 - \zeta q^n)}
+ \frac{6n-2r-3}{2(1 - \zeta q^n)^2} + \frac{1}{(1 - \zeta q^n)^3}
\right) .
\end{equation*}
\end{lem}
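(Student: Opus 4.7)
The plan is to specialize Lemma \ref{prop:triple_theta_identity} at $\bm{w}=(w,-w)$ and then send $w\to 0$. On the left-hand side, using that $\vt$ is odd one has $\vt(z+w)\vt(z-w) = \vt(z)^2 + O(w^2)$, so the LHS tends to $\zeta^r/\vt(z)^3$ with no $O(w)$ or $O(w^{-1})$ terms. The task then is to expand the three sums on the right-hand side around $w=0$, check that all $w^{-2}$ and $w^{-1}$ contributions cancel, and read off the finite part.

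After substitution, the first summand becomes $\frac{i}{\eta^3 \vt(w)\vt(-w)}\, S_1$, where $S_1 := \sum_{n\in\Z} \frac{(-1)^n q^{\frac{3n^2}{2}-rn}}{1-\zeta q^n}$ is independent of $w$. In the second and third summands the factors $\vt(w_1-w_2)$ and $\vt(w_2-w_1)$ become $\pm\vt(2w)$, and using $\vt(-w)=-\vt(w)$, $\vt(-2w)=-\vt(2w)$ both pieces share a common denominator $\vt(w)\vt(2w)$. Writing the numerator of $I_2$ as $G_2(w)$, one checks directly that the numerator of $I_3$ equals $G_2(-w)$, so the combination $I_2+I_3$ has a numerator $G_2(w)+G_2(-w)$ that is \emph{even} in $w$; in particular its first Taylor coefficient vanishes.

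Now apply Lemma \ref{lem:const_term_thetadiv} with $(a,b)=(1,-1)$ to $I_1$ (where the numerator is constant in $w$) and with $(a,b)=(1,2)$ to $I_2+I_3$. The $w^{-2}$ terms are both proportional to $iS_1$ with opposite signs and cancel; the $w^{-1}$ terms vanish by the evenness just noted (and by constancy of the numerator of $I_1$). The surviving finite part equals $\frac{iS_1 E_2}{8\eta^9} + \frac{G_2''(0)}{8\pi^2 \eta^9}$, which must equal $\zeta^r/\vt(z)^3$. The main remaining step, and the real technical obstacle, is computing $G_2''(0)$: differentiating $e^{2\pi i(3n-r)w}/(1-\zeta e^{2\pi i w}q^n)$ twice at $w=0$ and rewriting the resulting expressions in $u:=\zeta q^n$ via the partial-fraction identities
\[
\frac{u}{(1-u)^2} = \frac{1}{(1-u)^2} - \frac{1}{1-u}, \qquad \frac{u(1+u)}{(1-u)^3} = \frac{2}{(1-u)^3} - \frac{3}{(1-u)^2} + \frac{1}{1-u},
\]
so that $G_2''(0)$ is regrouped as a linear combination of $(1-\zeta q^n)^{-k}$ for $k=1,2,3$ with polynomial coefficients in $n$ and $r$. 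Combining this with the $E_2$ contribution from $iS_1 E_2/(8\eta^9)$ collapses the $(1-\zeta q^n)^{-1}$-coefficient into $4(3n-r-1)^2-E_2$, the $(1-\zeta q^n)^{-2}$-coefficient into $6n-2r-3$ (after multiplication by the overall $-\tfrac{1}{2}$ up front), and the $(1-\zeta q^n)^{-3}$-coefficient into $1$, matching the stated formula.
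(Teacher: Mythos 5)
Your proof is correct and coincides with the paper's: the paper establishes this lemma exactly by substituting $\bm w=(w,-w)$ into Lemma \ref{prop:triple_theta_identity} and letting $w\to 0$ via Lemma \ref{lem:const_term_thetadiv}, which is precisely your argument, and your details all check out (the $w^{-2}$- and $w^{-1}$-cancellations, the combined $E_2$-contribution $\frac{i S_1 E_2}{8\eta^9}$, and the partial-fraction regrouping of $G_2''(0)$ reproduce the coefficients $\frac{4(3n-r-1)^2-E_2}{8}$, $\frac{6n-2r-3}{2}$, and $1$). One cosmetic remark: the absence of odd terms in the expansion of the left-hand side follows from its evenness in $w$ (the factors $\vartheta(z+w)$ and $\vartheta(z-w)$ swap under $w\mapsto -w$), not from the oddness of $\vartheta$ itself.
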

Plugging in $\bm w=(w,-w)$ in Lemma \ref{prop:triple_theta_identity2} and taking $w \to 0$ using Lemma \ref{lem:const_term_thetadiv}, we obtain the following result.
\begin{lem}
\label{cor:thetacubed_inverse2}
For $r \in \Z$ we have
\begin{align*}
\frac{\zeta^r}{\vth (z)^2 \vth (2z)}   = 
-\frac{i}{\eta^9} 
&\sum_{n \in \Z} q^{3n^2-rn} 
% \vphantom simuliert ausdruck, damit sich \left anpassen kann
\left( \vphantom{ - \frac{\eta^6}{2}\left.
\sum_{\substack{\ell_1,\ell_2\in\{0,1\} \\ \bm{\ell} \neq \bm{0} }}
\frac{1}{\vt \lp \frac{\ell_1\tau+\ell_2}{2}\rp^2} 
\frac{(-1)^{\ell_1+\ell_2+r\ell_2}q^{\frac{\ell_1(\ell_1-r)}{2}+3\ell_1n} }{1- (-1)^{\ell_2} \zeta q^{n+\frac{\ell_1}{2}}} 
\right)}
\frac{2\lp 6n -r-1 \rp^2-{E_2}}{8\left(1 - \zeta q^n\right)}
+\frac{12n-2r-3}{4(1 - \zeta q^n)^2}
+ \frac{1}{2(1 - \zeta q^n)^3} \right.
\\ & \qquad \left.
- \frac{\eta^6}{2}
\sum_{\substack{\ell_1,\ell_2\in\{0,1\} \\ \bm{\ell} \neq (0,0) }}
\frac{1}{\vt \lp \frac{\ell_1\tau+\ell_2}{2}\rp^2} 
\frac{(-1)^{\ell_1+\ell_2+r\ell_2}q^{\frac{\ell_1(\ell_1-r)}{2}+3\ell_1n} }{1- (-1)^{\ell_2} \zeta q^{n+\frac{\ell_1}{2}}} 
\right).
\end{align*}
\end{lem}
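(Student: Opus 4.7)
The plan is to derive Lemma \ref{cor:thetacubed_inverse2} as a degeneration of Lemma \ref{prop:triple_theta_identity2}, specializing $w_1 = w$, $w_2 = -w$ and letting $w \to 0$. The left-hand side becomes $\frac{\zeta^r}{\vartheta(2z)\vartheta(z+w)\vartheta(z-w)}$, which is holomorphic at $w=0$ with value $\frac{\zeta^r}{\vartheta(2z)\vartheta(z)^2}$. The right-hand side, however, develops apparent poles at $w=0$ through the vanishing $\vartheta$-factors in the denominators; since these poles must cancel, extracting the regular ($w=0$) values produces the claimed identity. The shape of the result, with its $(1-\zeta q^n)^{-1}, (1-\zeta q^n)^{-2}, (1-\zeta q^n)^{-3}$ hierarchy and explicit $E_2$-contribution, is exactly the signature of this kind of double-pole resolution, and closely mirrors the derivation of Lemma \ref{cor:thetacubed_inverse} from Lemma \ref{prop:triple_theta_identity}.

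Concretely, I would proceed as follows. First, after the substitution, the first two terms on the right-hand side of Lemma \ref{prop:triple_theta_identity2} carry a factor $\frac{1}{\vartheta(2w)^2}$ (noting $w_1 - w_2 = 2w$ and $\vartheta(2w_2)^2 = \vartheta(-2w)^2 = \vartheta(2w)^2$), while the $(\ell_1,\ell_2)=(0,0)$ summand in the third sum carries $\frac{1}{\vartheta(w)\vartheta(-w)} = -\frac{1}{\vartheta(w)^2}$. For each, apply Lemma \ref{lem:const_term_thetadiv} with $(a,b)=(2,2)$ and $(a,b)=(1,1)$ respectively, taking $F(w)$ to be the remaining analytic factor (which up to the $\eta^{-3}$ prefactor is of the form $\sum_n \frac{q^{3n^2-rn} \mathrm{e}^{\alpha n w} \mathrm{e}^{\beta r w}}{1 - \zeta \mathrm{e}^{\gamma w} q^n}$). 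The constants $F(0)$, $F'(0)$, $F''(0)$ are then straightforward power-series computations: differentiating $\frac{1}{1-\zeta \mathrm{e}^{2\pi i w} q^n}$ in $w$ brings down factors $\frac{2\pi i \zeta q^n}{(1-\zeta q^n)^2}$ and so on, producing the $(1-\zeta q^n)^{-k}$ tower. The remaining three summands $(\ell_1,\ell_2) \in \{(0,1),(1,0),(1,1)\}$ in the third sum of Lemma \ref{prop:triple_theta_identity2} are already regular at $w=0$ and can simply be evaluated, yielding the $\bm{\ell} \neq (0,0)$ sum in the statement.

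Next, I would add the two $\frac{1}{\vartheta(2w)^2}$ pieces (which are related by $w \mapsto -w$, so the odd-order coefficients drop out) and the single $-\frac{1}{\vartheta(w)^2}$ piece, and verify that the $w^{-2}$ and $w^{-1}$ polar terms cancel among these three contributions. This cancellation is forced by holomorphy of the left-hand side but provides a useful internal check; it will rearrange into combinatorial identities among $\sum_n q^{3n^2 - rn}$-type sums that are consequences of the shift $n \mapsto n \pm 1$ (indeed, to tidy up the $F''(0)$ contribution from the $(a,b)=(2,2)$ case one shifts $n$ inside the series to collect terms with matching exponents of $q$). Collecting the regular parts produces the sum $\frac{2(6n-r-1)^2 - E_2}{8(1-\zeta q^n)}$ together with the $\frac{12n-2r-3}{4(1-\zeta q^n)^2}$ and $\frac{1}{2(1-\zeta q^n)^3}$ terms; the $E_2$ piece arises precisely from the ``$\frac{a^2+b^2}{ab}\frac{E_2}{24\eta^6}F(0)$'' contribution in Lemma \ref{lem:const_term_thetadiv}, with the coefficient $\frac{a^2+b^2}{ab} = 2$ in both the $(2,2)$ and $(1,1)$ cases accounting for the factor $\frac{1}{8}$ in the displayed formula.

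The main obstacle is purely bookkeeping: keeping straight the signs and the prefactors $\frac{1}{4\pi^2 ab\,\eta^6}$, the combinatorial identities needed to merge the two $w\mapsto -w$-symmetric contributions with the $\vartheta(w)^{-2}$ contribution, and the verification that the $w^{-1}$ poles really cancel after the index shift in $n$. There is no conceptual difficulty once the template provided by the proof of Lemma \ref{cor:thetacubed_inverse} is in hand; the computation is longer only because three rather than one singular source must be balanced.
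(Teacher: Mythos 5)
Your proposal is correct and follows exactly the paper's own route: the paper obtains Lemma \ref{cor:thetacubed_inverse2} by plugging $\bm w=(w,-w)$ into Lemma \ref{prop:triple_theta_identity2} and letting $w\to 0$ via Lemma \ref{lem:const_term_thetadiv}, precisely as you describe. Your bookkeeping of the three singular sources (the two $\vartheta(2w)^{-2}$ terms paired by $w\mapsto -w$, the $(\ell_1,\ell_2)=(0,0)$ term with $\vartheta(w)\vartheta(-w)=-\vartheta(w)^2$, and the regular $\bm\ell\neq(0,0)$ terms) and of the $E_2$-contribution with $\frac{a^2+b^2}{ab}=2$ matches the intended computation, with the forced cancellation of the $w^{-2}$ and $w^{-1}$ parts serving, as you note, only as a consistency check.
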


We are now ready to prove Proposition \ref{prop_const_term_B2}.
\begin{proof}[{Proof of Proposition \ref{prop_const_term_B2}}]
Let for $r_1\in\mathbb Z, r_2\in\mathbb Z+\frac{1}{2}$ and $T_B$ defined in \eqref{defineT}
\begin{equation*}
C(\bm r):=\mathrm{CT}_{[\bm\z]} \left( T_B(\bm z) \eta^{12} \zeta_1^{r_1}\zeta_2^{r_2}\right) .
\end{equation*}
Using Lemma \ref{prop:triple_theta_identity} with $(r,z,w_1,w_2) \mapsto (r_2,z_2,z_1,2z_1)$, Lemma \ref{cor:thetacubed_inverse2} with $(z,r) \mapsto (z_1,r_1-3k)$ and $(z,r) \mapsto (z_1,r_1-2r_2+3k)$, and Lemma \ref{cor:thetacubed_inverse} with $(z,r) \mapsto (z_1,r_1-r_2)$ we obtain 
{
\begin{align*}
&T_B(\bm z) \eta^{12} \z_1^{r_1}\z_2^{r_2}
\\ &\ 
= 
\sum_{\bm n \in \Z^2} 
\frac{(-1)^{n_1} q^{\frac{3n_1^2}{2} +3n_1n_2+3n_2^2-r_2n_1-r_1n_2}}{1-\z_2 q^{n_1}}
\left(  \vphantom{\sum_{\substack{\ell_1,\ell_2\in\{0,1\} \\ \bm{\ell} \neq (0,0) }}
\frac{\eta^6}{2\vt \lp \frac{\ell_1\tau+\ell_2}{2}\rp^2} 
\frac{(-1)^{\ell_1+\ell_2+(1+r_1+n_1)\ell_2}q^{\frac{\ell_1\left(\ell_1+3n_1-r_1 \right)}{2}+3\ell_1n_2} }{1- (-1)^{\ell_2} \z_1 q^{n_2+\frac{\ell_1}{2}}}}
\frac{2\lp 3n_1 -r_1+6n_2-1 \rp^2-{E_2}}{8\lp 1 - \z_1 q^{n_2}\rp}
+  \frac{6n_1+{12n_2-2r_1-3}}{4\left(1 - \z_1 q^{n_2} \right)^2} \right.
\\ & \qquad \qquad \left.
+ \frac{1}{2\left(1 - \z_1 q^{n_2}\right)^3} 
- 
\sum_{\substack{\ell_1,\ell_2\in\{0,1\} \\ \bm{\ell} \neq (0,0) }}
\frac{\eta^6}{2\vt \lp \frac{\ell_1\tau+\ell_2}{2}\rp^2} 
\frac{(-1)^{\ell_1+\ell_2+(1+r_1+n_1)\ell_2}q^{\frac{\ell_1\left(\ell_1+3n_1-r_1 \right)}{2}+3\ell_1n_2} }{1- (-1)^{\ell_2} \z_1 q^{n_2+\frac{\ell_1}{2}}}
\right)
\\ &\ 
+
\sum_{\bm n\in \Z^2} 
\frac{(-1)^{n_1} q^{\frac{3n_1^2}{2}-3n_1n_2+3n_2^2-r_2n_1+(2r_2-r_1)n_2}}{1-\zeta_1^2\zeta_2q^{n_1}}
\left( \vphantom{\sum_{\substack{\ell_1,\ell_2\in\{0,1\} \\ \bm{\ell} \neq \bm{0} }}
\frac{\eta^6}{2\vt \lp \frac{\ell_1\tau+\ell_2}{2}\rp^2} 
\frac{(-1)^{\ell_1+\ell_2+(1+r_1+2r_2+n_1)\ell_2}q^{\frac{\ell_1(\ell_1-3n_1-r_1+2r_2)}{2}+3\ell_1n_2} }{1- (-1)^{\ell_2} \z_1 q^{n_2+\frac{\ell_1}{2}}}}
\frac{2\lp 3n_1-6n_2+r_1-2r_2+1 \rp^2-{E_2}}{8\lp1 - \z_1 q^{n_2}\rp} \right.
\\ & \qquad \qquad  \qquad  \qquad \left.
-  \frac{6n_1-12n_2+2r_1-4r_2+3}{4\left(1 - \z_1 q^{n_2}\right)^2}
+ \frac{1}{2(1 - \z_1 q^{n_2})^3} \right.
\\ & \qquad \qquad  \qquad \left.
- 
\sum_{\substack{\ell_1,\ell_2\in\{0,1\} \\ \bm{\ell} \neq (0,0) }}
\frac{\eta^6}{2\vt \lp \frac{\ell_1\tau+\ell_2}{2}\rp^2} 
\frac{(-1)^{\ell_1+\ell_2+(1+r_1+2r_2+n_1)\ell_2}q^{\frac{\ell_1(\ell_1-3n_1-r_1+2r_2)}{2}+3\ell_1n_2} }{1- (-1)^{\ell_2} \z_1 q^{n_2+\frac{\ell_1}{2}}}
\right)
\\ &\ 
-\sum_{\bm n \in \Z^2}  \frac{(-1)^{n_1+n_2} q^{\frac{3n_1^2}{2}+\frac{3n_2^2}{2}-r_2n_1+(r_2-r_1)n_2}}{1-\z_1\z_2q^{n_1}}
\\ &\qquad \qquad \qquad \times
\left(
\frac{4(3n_2-r_1+r_2-1)^2-E_2}{8(1 - \zeta_1 q^{n_2})}
+ \frac{6n_2-2r_1+2r_2-3}{2(1 - \z_1 q^{n_2})^2} + \frac{1}{(1 - \zeta_1 q^{n_2})^3}
\right) .
\end{align*}
}

In the range  $|q|^{\frac{1}{2}} < |\zeta_1| <1$, $|q| < |\zeta_2|,  |\zeta_1 \zeta_2 |,  |\zeta_1^2 \zeta_2 | <1$, we use \eqref{eq:constant_term} to find the constant term as
\begin{equation*}
C(\bm{r}) = \sum_{\ell=1}^6 C_\ell (\bm{r}) + \sum_{\substack{\ell_1,\ell_2\in\{0,1\} \\ \bm{\ell} \neq (0,0) }} 
\lp C_{7,\bm\ell}(\bm r) + C_{8,\bm\ell}(\bm r) \rp,
\end{equation*}
where 
{
\allowdisplaybreaks
\begin{align*}
C_1(\bm{r}) &:= \frac{1}{4} \sum_{\bm n\in\N_0^2} (-1)^{n_1}
(6n_2+3{n_1}-r_1)^2
q^{\frac{3n_1^2}{2}+3n_1n_2+3n_2^2-r_2n_1-r_1n_2}  ,
\\
C_2(\bm{r}) &:= \frac{1}{4} \sum_{\bm n\in\N_0^2} 
(-1)^{n_1} (6n_2-3{n_1}+2r_2-r_1)^2
q^{\frac{3n_1^2}{2}-3n_1n_2+3n_2^2-r_2n_1+(2r_2-r_1)n_2}  
 ,
\\
C_3(\bm{r}) &:= - \frac{1}{2} \sum_{\bm n\in\N_0^2}  
(-1)^{{n_1}+n_2} (3n_2+r_2-r_1)^2 q^{\frac{3n_1^2}{2}+\frac{3n_2^2}{2}- r_2 {n_1} +(r_2-r_1)n_2}
,
\\
C_4(\bm{r}) &:= -\frac{E_2}{8} \sum_{\bm n\in\N_0^2} 
(-1)^{n_1} q^{\frac{3n_1^2}{2}+3n_1n_2+3n_2^2 -r_2 {n_1}-r_1n_2} ,
\\
C_5(\bm{r}) &:= -\frac{E_2}{8} \sum_{\bm n\in\N_0^2} 
(-1)^{n_1} q^{\frac{3n_1^2}{2}-3n_1n_2+3n_2^2 -r_2 {n_1}+(2r_2-r_1)n_2},
\\
C_6(\bm{r}) &:= \frac{E_2}{8} \sum_{\bm n\in\N_0^2}  
(-1)^{{n_1}+n_2} q^{\frac{3n_1^2}{2}+\frac{3n_2^2}{2}- r_2 {n_1} +(r_2-r_1)n_2} ,
\\
C_{7,\bm\ell}(\bm r) &:=-
\frac{\eta^6(-1)^{\ell_1+(r_1+1)\ell_2}}{2\vt\left(\frac{\ell_1\tau+\ell_2}{2}\right)^2}q^{\frac{\ell_1(\ell_1-r_1)}{2}}
\sum_{\bm n\in\N_0^2}(-1)^{(\ell_2+1)n_1}q^{\frac{3n_1^2}{2}+3n_1n_2+3n_2^2+\left(\frac{3\ell_1}{2}-r_2\right)n_1+(3\ell_1-r_1)n_2} ,
\\
C_{8,\bm\ell}(\bm r)&:=-
\frac{\eta^6(-1)^{\ell_1+r_1 \ell_2}}{2\vt\left(\frac{\ell_1\tau+\ell_2}{2}\right)^2}q^{\frac{\ell_1(\ell_1-r_1)}{2}+\ell_1r_2}
\sum_{\bm n\in\N_0^2}(-1)^{(1+\ell_2)n_1 }q^{\frac{3n_1^2}{2}-3n_1n_2+3n_2^2-\left(\frac{3\ell_1}{2}+r_2\right)n_1+(3\ell_1+2r_2-r_1)n_2}.
\end{align*}
}

Then we may write
\begin{align*}
\operatorname{CT}_{[\bm\z]} (F(\bm\z)) = \frac{q^{\frac{1}{3}}}{\eta^{8}}\sum_{\bm r \in\mathcal S_B} \varepsilon_B(\bm r)C(\bm r),
\end{align*}
where
\begin{align*}
\mathcal{S}_B &:= \left\{
\lp -2, -\frac{3}{2} \rp, \lp -1, \frac{1}{2} \rp,\lp 1, -\frac{1}{2} \rp,    \lp 2, \frac{3}{2} \rp, 
\lp -1, -\frac{3}{2} \rp, \lp -2, -\frac{1}{2} \rp, \lp 1, \frac{3}{2} \rp,  
\lp 2, \frac{1}{2} \rp
\right\},\\
\e_B (\bm r) &:= 
\begin{cases}
1 &\mbox{if } \bm{r} \in 
\left\{ \lp -2, -\frac{3}{2} \rp,  \lp -1, \frac{1}{2} \rp, \lp 1, -\frac{1}{2} \rp, \lp 2, \frac{3}{2} \rp \right\}, 
\\
-1 &\mbox{if } \bm{r} \in 
\left\{ \lp -1, -\frac{3}{2} \rp, 
\lp -2, -\frac{1}{2} \rp, \lp 1, \frac{3}{2} \rp,   \lp 2, \frac{1}{2} \rp \right\} .
\end{cases}
\end{align*}

Next, we simplify the individual terms in the decomposition of $C(\bm{r})$. We start with
\begin{align*}
C_6 (\bm r) =\frac{E_2}{8} q^{-\frac{1}{6} \left(r_1^2-2r_1r_2+2r_2^2 \right)}
\sum_{\bm n\in\N_0^2}  
(-1)^{n_1+n_2} q^{\frac{3}{2} \lp n_1-\frac{r_2}{3} \rp^2 + \frac{3}{2} {\lp n_2 -\frac{r_1-r_2}{3} \rp^2}}.
\end{align*}
Note that the sum in $C_6 (\bm r)$ is invariant if $n_1$ and $n_2$ are interchanged together with their respective shifts.  The terms $C_6 (\bm r)$ cancel in pairs  and we have
\begin{equation*}
\sum_{\bm r \in \cS_B} \e_B (\bm r)  C_6 (\bm r) = 0.
\end{equation*}

For the remaining pieces, the details are quite lengthy. Therefore, we carry them out only for one of the terms and leave the remaining ones to the reader. 
We focus on 
\begin{align*}
C_3 (\bm r) = 
-\frac{9}{2} q^{-\frac{1}{6} \left(r_1^2-2r_1r_2+2r_2^2 \right)} \sum_{\bm n\in\N_0^2}  
(-1)^{n_1+n_2}
\lp n_2 +\frac{r_2-r_1}{3} \rp^2
 q^{\frac{3}{2} \lp n_1-\frac{r_2}{3} \rp^2 + \frac{3}{2} \lp n_2 +\frac{r_2-r_1}{3} \rp^2}.
\end{align*}
We have 
\begin{align*}
&C_3 \lp 1, - \frac{1}{2} \rp - C_3 \lp -2, - \frac{1}{2} \rp 
\\ & \qquad
= -\frac{9}{2} q^{-\frac{5}{12}}
\sum_{\bm n\in\N_0^2}  (-1)^{n_1+n_2} q^{\frac{3}{2} \lp n_1+\frac{1}{6} \rp^2}
\lp 
\lp n_2 -\frac{1}{2} \rp^2
q^{\frac{3}{2} \lp n_2 -\frac{1}{2} \rp^2}  
-
\lp n_2 +\frac{1}{2} \rp^2
q^{\frac{3}{2} \lp n_2 +\frac{1}{2} \rp^2}  
\rp
\\ & \qquad
= -\frac{9}{8} q^{-\frac{1}{24}}
\sum_{n_1 = 0}^\infty  (-1)^{n_1} q^{\frac{3}{2} \lp n_1+\frac{1}{6} \rp^2}
+9 q^{-\frac{5}{12}}
\sum_{\bm n\in\N_0^2}  (-1)^{n_1+n_2} 
\lp n_2 +\frac{1}{2} \rp^2
q^{\frac{3}{2} \lp n_1+\frac{1}{6} \rp^2+ \frac{3}{2} \lp n_2 +\frac{1}{2} \rp^2},
\end{align*}
where for the last equality we split off the $n_2=0$ contribution from the first term and then shift $n_2\mapsto n_2+1$ there. 

Next, we have
\begin{align*}
&C_3 \lp -1,  \frac{1}{2} \rp - C_3 \lp 2,  \frac{1}{2} \rp 
\\ & \qquad
= -\frac{9}{2} q^{-\frac{5}{12}}
\sum_{\bm n\in\N_0^2}  (-1)^{n_1+n_2} q^{\frac{3}{2} \lp n_1-\frac{1}{6} \rp^2}
\lp 
\lp n_2 +\frac{1}{2} \rp^2
q^{\frac{3}{2} \lp n_2 +\frac{1}{2} \rp^2}  
-
\lp n_2 -\frac{1}{2} \rp^2
q^{\frac{3}{2} \lp n_2 -\frac{1}{2} \rp^2}  
\rp
\\ & \qquad
= \frac{9}{8} q^{-\frac{1}{24}}
\sum_{n_1 = 0}^\infty  (-1)^{n_1} q^{\frac{3}{2} \lp n_1-\frac{1}{6} \rp^2}
-9 q^{-\frac{5}{12}}
\sum_{\bm n\in\N_0^2}  (-1)^{n_1+n_2} 
\lp n_2 +\frac{1}{2} \rp^2
q^{\frac{3}{2} \lp n_1-\frac{1}{6} \rp^2+ \frac{3}{2} \lp n_2 +\frac{1}{2} \rp^2},
\end{align*}
where for the last equality we split off the $n_2=0$ part from the second term and then shift $n_2 \mapsto n_2+1$ there. Splitting off the terms with $n_1=0$ from the second sum and then changing $n_1 \mapsto -n_1$ and $n_2 \mapsto -n_2-1$ we get
\begin{align*}
C_3 \lp -1,  \frac{1}{2} \rp - C_3 \lp 2,  \frac{1}{2} \rp 
&=
\frac{9}{8} q^{-\frac{1}{24}}
\sum_{n_1 = 0}^\infty  (-1)^{n_1} q^{\frac{3}{2} \lp n_1-\frac{1}{6} \rp^2}
-9 q^{-\frac{3}{8}}
\sum_{n_2 = 0}^\infty  (-1)^{n_2} 
\lp n_2 +\frac{1}{2} \rp^2
q^{\frac{3}{2} \lp n_2 +\frac{1}{2} \rp^2} 
\\ & \qquad
+ 9 q^{-\frac{5}{12}}
\sum_{\bm n\in-\N^2}  (-1)^{n_1+n_2} 
 \lp n_2 +\frac{1}{2} \rp^2
q^{\frac{3}{2} \lp n_1+\frac{1}{6} \rp^2+ \frac{3}{2} \lp n_2 +\frac{1}{2} \rp^2} .
\end{align*}

\noindent Next, we study
\begin{align*}
&C_3 \lp -2, - \frac{3}{2} \rp - C_3 \lp 1, \frac{3}{2} \rp 
\\ & 
= -\frac{9}{2} q^{-\frac{5}{12}}
\sum_{\bm n\in\N_0^2}  (-1)^{n_1+n_2}  \lp n_2 +\frac{1}{6} \rp^2 
q^{\frac{3}{2} \lp n_2 +\frac{1}{6} \rp^2}
\lp 
q^{\frac{3}{2} \lp n_1 +\frac{1}{2} \rp^2} 
-
q^{\frac{3}{2} \lp  n_1 -\frac{1}{2}  \rp^2} 
\rp
\\ & 
=  \frac{9}{2} q^{-\frac{1}{24}}
\sum_{n_2 = 0}^\infty  (-1)^{n_2}  \lp n_2 +\frac{1}{6} \rp^2 
q^{\frac{3}{2} \lp n_2 +\frac{1}{6} \rp^2}
-9 q^{-\frac{5}{12}}
\sum_{\bm n\in\N_0^2}  (-1)^{n_1+n_2}  \lp n_2 +\frac{1}{6} \rp^2 
q^{\frac{3}{2} \lp n_1 +\frac{1}{2} \rp^2+\frac{3}{2} \lp n_2 +\frac{1}{6} \rp^2},
\end{align*}
where for the last equality we split off the  $n_1=0$ contribution from the second term and then shift $n_1 \mapsto n_1+1$ there. Finally, we study
\begin{align*}
&C_3 \lp 2,  \frac{3}{2} \rp - C_3 \lp -1, -\frac{3}{2} \rp 
\\ &  \qquad  \qquad 
= -\frac{9}{2} q^{-\frac{5}{12}}
\sum_{\bm n\in\N_0^2}  (-1)^{n_1+n_2}  \lp n_2 -\frac{1}{6} \rp^2 
q^{\frac{3}{2} \lp n_2 -\frac{1}{6} \rp^2}
\lp 
q^{\frac{3}{2} \lp n_1 -\frac{1}{2} \rp^2} 
-
q^{\frac{3}{2} \lp  n_1 +\frac{1}{2}  \rp^2} 
\rp
\\ &  \qquad  \qquad 
=
-\frac{9}{2} q^{-\frac{1}{24}}
\sum_{n_2 = 0}^\infty  (-1)^{n_2}  \lp n_2 -\frac{1}{6} \rp^2 
q^{\frac{3}{2} \lp n_2 - \frac{1}{6} \rp^2}
\\ & \qquad  \qquad  \qquad  \qquad  \qquad  \qquad 
+  
9 q^{-\frac{5}{12}}
\sum_{\bm n\in\N_0^2}  (-1)^{n_1+n_2}  \lp n_2 -\frac{1}{6} \rp^2 
q^{\frac{3}{2} \lp n_1 +\frac{1}{2} \rp^2+\frac{3}{2} \lp n_2 -\frac{1}{6} \rp^2},
\end{align*}
where for the last equality we split off the $n_1=0$ part from the first term and then shift $n_1 \mapsto n_1+1$ there. Splitting off the $n_2=0$ contribution  from the double sum and then changing $n_2 \mapsto -n_2$ and $n_1 \mapsto -n_1-1$ we get
\begin{multline*}
C_3 \lp 2,  \frac{3}{2} \rp - C_3 \lp -1, -\frac{3}{2} \rp  
=
-\frac{9}{2} q^{-\frac{1}{24}}
\sum_{n_2 = 0}^\infty  (-1)^{n_2}  \lp n_2 -\frac{1}{6} \rp^2 
q^{\frac{3}{2} \lp n_2 - \frac{1}{6} \rp^2}
\\ 
+\frac{1}{4} q^{-\frac{3}{8}}
\sum_{n_1 = 0}^\infty  (-1)^{n_1}  q^{\frac{3}{2} \lp n_1 +\frac{1}{2} \rp^2}
-9 q^{-\frac{5}{12}}
\sum_{\bm n\in-\N^2}  (-1)^{n_1+n_2}  \lp n_2 + \frac{1}{6} \rp^2 
q^{\frac{3}{2} \lp n_1 +\frac{1}{2} \rp^2+\frac{3}{2} \lp n_2 +\frac{1}{6} \rp^2}.
\end{multline*}
\noindent Therefore, the two-dimensional contribution in $\sum_{\bm r \in \cS_B} \e_B (\bm r) C_3 (\bm r)$ is 
\begin{equation*}
\frac{9  q^{-\frac{5}{12}}}{2}
\sum_{\bm n\in \Z^2 + \lp \frac{1}{2}, \frac{1}{6} \rp}  
(-1)^{n_1-\frac 12+n_2-\frac 16}	
\lp 1 + \sgn (n_1) \sgn (n_2) \rp\lp n_1^2 - n_2^2 \rp
q^{\frac{3n_1^2}{2}+\frac{3n_2^2}{2}} .
\end{equation*}
Under $n_1 \mapsto -n_1$ the contribution of the ``$1$'' in parentheses picks  a minus sign (note that $(-1)^{2n_1} = -1$) and hence vanishes. Then we can rewrite the two-dimensional part as
\begin{equation*}
\frac{9  q^{-\frac{5}{12}}}{2} \sum_{\bm n\in \Z^2 + \lp \frac{1}{2}, \frac{1}{6} \rp}  
(-1)^{n_1-\frac 12+n_2-\frac 16} 
\sgn (n_1) \sgn (n_2) \lp n_1^2 - n_2^2 \rp
q^{\frac{3n_1^2}{2}+\frac{3n_2^2}{2}}. 
\end{equation*}
Mapping $\bm n \mapsto (-n_1-n_2, n_2)$ we get 
\begin{equation*}
\frac{9  q^{-\frac{5}{12}}}{2} \sum_{\bm n\in \Z^2 + \lp \frac{1}{3}, \frac{1}{6} \rp}  
(-1)^{n_1-\frac 13} \sgn (n_1+n_2) \sgn (n_2) n_1 (n_1+2n_2)  q^{Q_B(\bm n)} .
\end{equation*}
\\
The one-dimensional pieces are
\begin{multline*}
-\frac{9}{8} q^{-\frac{1}{24}} \sum_{n_1 \in \Z} (-1)^{n_1} \sgn(n_1) q^{\frac{3}{2} \left(n_1+\frac{1}{6}\right)^2} + \frac 18q^{-\frac 38}\sum_{n_1 \in \Z}(-1)^{n_1}\sgn\left(n_1+\frac 12\right)q^{\frac 32\left(n_1+\frac 12\right)^2}
\\+\frac{9}{2} q^{-\frac{1}{24}} \sum_{n_2 \in \Z} (-1)^{n_2} \sgn(n_2)\left(n_2+\frac{1}{6}\right)^2 q^{\frac{3}{2} \left(n_2+\frac{1}{6}\right)^2}
\\
-{\frac 92q^{-\frac 38}\sum_{n_2 \in \Z}(-1)^{n_2}\sgn\left(n_2+\frac 12\right)\left(n_2+\frac 12\right)^2q^{\frac 32\left(n_2+\frac 12\right)^2}}.
\end{multline*}

We next consider the remaining pieces following similar computations. 
\begin{itemize}[leftmargin=*]
\item  
Firstly, we have
\begin{align*}
\sum_{\bm r \in \mathcal{S}_B}& \ve_B(\bm r) C_1(\bm r)\\
&=	\frac{9}{4} q^{-\frac{5}{12}}
\lp \sum_{\bm n\in\Z^2+\left(\frac{1}{3},\frac{1}{6}\right)}
- \sum_{\bm n\in\Z^2+\left(\frac{1}{3},\frac{1}{2}\right)} \rp 
(-1)^{n_1-\frac 13} (1+\sgn(n_1)\sgn(n_2)) (n_1+2n_2)^2q^{Q_B(\bm n)}
\\
 & \hspace*{+7pt} -\frac{9}{4} \sum_{n_2\in\Z} \sgn(n_2) \lp 2n_2+\frac{2}{3} \rp^2 q^{3n_2^2+2n_2}
+\frac{9}{4} \sum_{n_2\in\Z} \sgn(n_2) \lp 2n_2+\frac{1}{3} \rp^2  q^{3n_2^2+n_2}
\\
& \hspace*{7pt} -\frac{9}{2} \sum_{n_1= 0}^\infty (-1)^{n_1} \left(n_1+\frac{1}{3}\right)^2 q^{\frac{3n_1^2}{2}+\frac{3n_1}{2}}
-\frac{9}{4} \sum_{n_1= 0}^\infty (-1)^{n_1} \left(n_1-\frac{2}{3}\right)^2 q^{\frac{3n_1^2}{2}-\frac{n_1}{2}} 
\\
&\hspace{200pt}+\frac{9}{4} \sum_{n_1 = 0}^\infty (-1)^{n_1} \left(n_1-\frac{1}{3}\right)^2 q^{\frac{3n_1^2}{2}+\frac{n_1}{2}},
%\\
\end{align*}
\end{itemize}
\begin{align*}
\sum_{\bm r \in \mathcal{S}_B} &\ve_B(\bm r) C_2(\bm r) \\
&=	
\frac 94 q^{-\frac{5}{12}}
\lp \sum_{\bm n\in\Z^2+\left(\frac 13,\frac 12\right)}
- \sum_{\bm n\in\Z^2+\left(\frac 13,\frac 16\right)} \rp  (-1)^{n_1-\frac 13}(1-\sgn(n_1)\sgn(n_2))(n_1+2n_2)^2q^{Q_B(\bm n)}
\\
&
\hspace*{+7pt} +\frac{9}{4} \sum_{n_2\in\Z} \sgn(n_2)\lp 2n_2+\frac{1}{3}\rp^2 q^{3n_2^2+n_2}-\frac{9}{4} \sum_{n_2\in\Z} \sgn(n_2)\lp 2n_2+\frac{2}{3}\rp^2 q^{3n_2^2+2n_2} 
\\
& \hspace*{+7pt}-\frac{9}{4}\sum_{n_1 = 1}^\infty(-1)^{n_1}\lp n_1+\frac{1}{3}\rp^2 q^{\frac{3n_1^2}{2}-\frac{n_1}{2}}
+\frac{9}{4}\sum_{n_1 = 1}^\infty(-1)^{n_1}\lp n_1+\frac{2}{3}\rp^2 q^{\frac{3n_1^2}{2}+\frac{n_1}{2}}
\\&\hspace{200pt}-\frac{9}{2}  \sum_{n_1 = 0}^\infty (-1)^{n_1} \lp n_1+\frac{2}{3} \rp^2 q^{\frac{3n_1^2}{2}+\frac{3n_1}{2}}.
\end{align*}
Combining the contributions from $C_1, C_2$, and $C_3$, we then find that
\begin{multline*}
\sum_{\bm r \in \mathcal{S}_B}\ve_B(\bm r) \lp C_1(\bm r) + C_2(\bm r) + C_3(\bm r)  \rp
\\ =
\frac 92q^{-\frac{5}{12}}\sum_{\bm n\in\Z^2+\left(\frac 13,\frac 16\right)} 
(-1)^{n_1-\frac 13}  \sgn(n_1) \lp \sgn(n_2) + \sgn (n_1 + n_2) \rp (n_1+2n_2)^2 q^{Q_B(\bm n)}
\\+
\frac 92q^{-\frac{5}{12}}\sum_{\bm n\in\Z^2+\left(\frac 13,\frac 16\right)} 
(-1)^{n_1-\frac 13} \sgn (n_1+n_2) \sgn (n_2) n_1(n_1+2n_2) q^{Q_B(\bm n)}
\\
+\frac{1}{2} 
+18 q^{-\frac{1}{12}} \sum_{n \in \Z+\frac{1}{6}} \sgn (n) n^2 q^{3 n^2}
- 18 q^{-\frac{1}{3}} \sum_{n \in \Z+\frac{1}{3}} \sgn (n)  n^2 q^{3 n^2}
\\
+ 9 q^{-\frac{1}{24}} \sum_{n \in \Z+\frac{1}{6}} (-1)^{n-\frac{1}{6}} \sgn(n) n^2 q^{\frac{3n^2}{2}}
-9 q^{-\frac 38}\sum_{n \in \Z+\frac{1}{2}}(-1)^{n-\frac{1}{2}} \sgn(n) n^2 q^{\frac{3n^2}{2}}.
\end{multline*}

\begin{itemize}[leftmargin=*]
\item Next, we have
\begin{multline*}
\sum_{\bm r \in \mathcal{S}_B}\ve_B(\bm r)C_4(\bm r)
= \sum_{\bm r \in \mathcal{S}_B}\ve_B(\bm r) C_5(\bm r) \\
\hspace{2cm}=-\frac{E_2}{8} q^{-\frac{5}{12}}\sum_{\bm n\in \Z^2+\left(\frac 13,\frac 16\right)}(-1)^{n_1-\frac 13}
\sgn(n_1) \lp \sgn(n_2) + \sgn (n_1+n_2) \rp
q^{Q_B(\bm n)}
\\
\hspace{1.9cm}+\frac{E_2}{8}
-\frac{E_2}{8} q^{-\frac{1}{12}} \sum_{n\in \Z+\frac{1}{6}} \sgn(n) q^{3n^2}
+\frac{E_2}{8} q^{-\frac{1}{3}} \sum_{n\in \Z +\frac{1}{3}} \sgn(n) q^{3n^2}
\\-\frac{E_2}{8} q^{-\frac{1}{24}} \sum_{n\in \Z+\frac{1}{6}} (-1)^{n-\frac{1}{6}} \sgn(n) q^{\frac{3n^2}{2}}
+\frac{E_2}{8} q^{-\frac{3}{8}} \sum_{n\in\Z +\frac{1}{2}} (-1)^{n-\frac{1}{2}} \sgn(n) q^{\frac{3n^2}{2}}
\end{multline*}
so that we get 
\begin{equation*}
\sum_{\bm r \in \mathcal{S}_B}\ve_B(\bm r) \lp C_4(\bm r) + C_5(\bm r) +C_6(\bm r)  \rp
= 2 \sum_{\bm r \in \mathcal{S}_B}\ve_B(\bm r) C_4(\bm r) .
\end{equation*}

\item We next consider the contributions from $C_{7,(0,1)}$ and $C_{8,(0,1)}$ to find that
\begin{align*}
-\frac{2\vt\left(\frac 12\right)^2}{\eta^6}\sum_{\bm r\in\mathcal{S}_B} \ve_B(\bm r)C_{7,(0,1)}(\bm r)&=-q^{-\frac{5}{12}}\sum_{\bm n\in \Z^2+\left(\frac 13,\frac 16\right)}
\sgn(n_1) \lp \sgn(n_2) + \sgn (n_1+n_2) \rp
q^{Q_B(\bm n)}\\&\hspace{-120pt}
-1+
q^{-\frac{1}{12}} \sum_{n\in \Z +\frac{1}{6}} \sgn(n) q^{3n^2}
+q^{-\frac{1}{3}} \sum_{n\in\Z+\frac{1}{3}} \sgn(n) q^{3n^2}
+q^{-\frac{1}{24}} \sum_{n\in\Z+\frac{1}{6}} q^{\frac{3n^2}{2}} 
-q^{-\frac{3}{8}} \sum_{n\in\Z+\frac{1}{2}} q^{\frac{3n^2}{2}} ,
\\
- \frac{2\vartheta \lp \frac{1}{2}\rp^2}{\eta^6} \sum_{\bm r \in \mathcal{S}_B} \ve_B(\bm r) C_{8,(0,1)}(\bm r) 
&=-q^{-\frac{5}{12}}\sum_{\bm n\in \Z^2+\left(\frac 13,\frac 16\right)}
\sgn(n_1) \lp \sgn(n_2) + \sgn (n_1+n_2) \rp
q^{Q_B(\bm n)}
\\
&\hspace{-120pt}
-1
+q^{-\frac{1}{12}} \sum_{n\in \Z +\frac{1}{6}} \sgn(n) q^{3n^2}
+q^{-\frac{1}{3}} \sum_{n\in\Z+\frac{1}{3}} \sgn(n) q^{3n^2}
+q^{-\frac{3}{8}} \sum_{n\in\Z+\frac{1}{2}} q^{\frac{3n^2}{2}} 
-q^{-\frac{1}{24}} \sum_{n\in\Z+\frac{1}{6}} q^{\frac{3n^2}{2}} .
\end{align*}
They combine as
\begin{multline*}
\frac{\vartheta \lp \frac{1}{2}\rp^2}{\eta^6}  
\sum_{\bm r \in \mathcal{S}_B} \ve_B(\bm r) \lp C_{7,(0,1)}(\bm r) + C_{8,(0,1)}(\bm r) \rp
\\
=
q^{-\frac{5}{12}}\sum_{\bm n\in \Z^2+\left(\frac 13,\frac 16\right)}
\sgn(n_1) \lp \sgn(n_2) + \sgn (n_1+n_2) \rp
q^{Q_B(\bm n)}
\\
+1
-q^{-\frac{1}{12}} \sum_{n\in \Z +\frac{1}{6}} \sgn(n) q^{3n^2}
-q^{-\frac{1}{3}} \sum_{n\in\Z+\frac{1}{3}} \sgn(n) q^{3n^2}.
\end{multline*}

\item Finally, we consider the case $\bm\ell=(1,\ell_2)$ ($\ell_2\in\{0,1\}$) and determine
\begin{align*}
\frac{2\vartheta \left(\frac{\tau+\ell_2}{2}\right)^2 }{\eta^6} & \sum_{\bm r \in \mathcal{S}_B}\ve_B(\bm r) C_{7,(1,\ell_2)}(\bm r) 
=
\frac{2\vartheta \left(\frac{\tau+\ell_2}{2}\right)^2 }{\eta^6} \sum_{\bm r \in \mathcal{S}_B}\ve_B(\bm r) C_{8,(1,\ell_2)}(\bm r) 
\\
&=
(-1)^{\ell_2} q^{-\frac{2}{3}} \sum_{\bm n\in\Z^2+\left(\frac{1}{3},\frac{2}{3}\right)} (-1)^{(\ell_2+1)\lp n_1- \frac{1}{3} \rp} 
\sgn(n_1) \lp \sgn(n_2) + \sgn (n_1+n_2) \rp q^{Q_B(\bm n)}
\\
& \hspace{70pt} - q^{-\frac{1}{3}} \sum_{n\in\Z+\frac{1}{3}} \sgn (n) q^{3n^2}
+(-1)^{\ell_2} q^{-\frac{7}{12}} \sum_{n\in\Z+\frac{1}{6}} \sgn (n) q^{3n^2}.
\end{align*}
\end{itemize}

The claim of the theorem now follows by a direct calculation using the following sign-identity on the two-dimensional contributions from $C_7$ and $C_8$ together with the change of variables $\bm n  \mapsto (-n_1-2n_2,n_2)$ which leaves both $Q_B$ and the respective lattice shifts invariant:
\begin{equation*}
\sgn(n_1)(\sgn(n_2)+\sgn(n_1+n_2))-\sgn(n_1+2n_2)(\sgn(n_2)-\sgn(n_1+n_2))=2\sgn(n_1+2n_2)\sgn(n_1).  \qedhere
\end{equation*}

\end{proof}

\subsection{Modular properties of the vacuum character}\hfill

As in the case of parafermionic characters of type $A_2$, we focus on the rank two contributions $\Phi (\t)$ and $\LL_{\bm a} (\t)$. For $\LL_{\bm a} (\t)$, the two vectors determining the factors inside the sign functions are orthogonal with respect to the quadratic form $Q_B$. Therefore, $\LL_{\bm a} (\t)$ can be written in terms of products of two rank one false theta functions. This leaves us with $\Phi (\t)$ as the only nontrivially rank two piece in the decomposition, which we study next. We start by defining
\begin{align*}
f_0 (\bm w) 
&:= \vth_{3,1}^{[1]} (w_1) \vth_{3,2}^{[1]} (w_2) - \vth_{3,2}^{[1]} (w_1) \vth_{3,1}^{[1]} (w_2),
\quad\hspace{0.25cm}
f_1 (\bm w) 
:= \vth_{3,1}^{[1]} (w_1) \vth_{3,2}^{[3]} (w_2) - \vth_{3,2}^{[1]} (w_1) \vth_{3,1}^{[3]} (w_2),
\\
g_0 (\bm w) 
&:=\vth_{\frac{3}{2},1}^{[1]} (w_1) \vth_{\frac{3}{2},0}^{[1]} (w_2)  
-\vth_{\frac{3}{2},0}^{[1]} (w_1)  \vth_{\frac{3}{2},1}^{[1]} (w_2),
\quad
g_1 (\bm w) 
:=\vth_{\frac{3}{2},1}^{[1]} (w_1) \vth_{\frac{3}{2},0}^{[3]} (w_2)  
-\vth_{\frac{3}{2},0}^{[1]} (w_1)  \vth_{\frac{3}{2},1}^{[3]} (w_2).
\end{align*}
A direct calculation, using Lemma \ref{SignLemma} then gives:
\begin{lem}\label{lem:F1int}
We have 
\begin{multline*}
\Phi(\t) =
\frac 23 \int_\t^{\t +i\infty} \int_\t^{w_1} 
\frac{72f_1(\bm w) -E_2(\t) f_0(\bm w) }{\sqrt{i(w_1-\t)} \sqrt{i(w_2-\t)}} dw_2 dw_1
\\+
\frac 16 \int_\t^{\t +i\infty} \int_\t^{w_1} 
\frac{36g_1(\bm w) -E_2(\t) g_0(\bm w) }{\sqrt{i(w_1-\t)} \sqrt{i(w_2-\t)}} dw_2 dw_1.
\end{multline*}
\end{lem}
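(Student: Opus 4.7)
\bigskip
\noindent\textbf{Proof proposal for Lemma \ref{lem:F1int}.} The plan is to apply Lemma \ref{SignLemma} to each of the three sign products $\sgn(n_1)\sgn(n_2)$, $\sgn(n_1)\sgn(n_1+n_2)$, and $\sgn(n_1+n_2)\sgn(n_2)$ that occur in $\Phi_1$ and $\Phi_2$, using two different orthogonal decompositions of the quadratic form $Q_B$:
\begin{align*}
Q_B(\bm n) \;=\; \tfrac{3}{4}\bigl(n_1^2+(n_1+2n_2)^2\bigr) \;=\; \tfrac{3}{2}\bigl((n_1+n_2)^2+n_2^2\bigr).
\end{align*}
For the two sign products in $\Phi_1$, I would take $\ell_1=\sqrt{3/2}\,n_1$, $\ell_2=\sqrt{3/2}(n_1+2n_2)$ with $\kappa=-1$ (for $\sgn(n_1)\sgn(n_2)$) and $\kappa=+1$ (for $\sgn(n_1)\sgn(n_1+n_2)$). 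In both cases the $(m_1,m_2)$ pair produced by the lemma is $(\pm\sqrt{3}n_2,\pm\sqrt{3}(n_1+n_2))$ in some order, and the two arctan corrections $\pm\frac{2}{\pi}\arctan(\pm 1)=\pm\tfrac12$ cancel when the two identities are added. The sum of the two sign products is then expressed as
\begin{equation*}
2\,I\bigl(\sqrt{3/2}n_1,\sqrt{3/2}(n_1+2n_2)\bigr)\;+\;I\bigl(\sqrt{3}n_2,\sqrt{3}(n_1+n_2)\bigr)\;-\;I\bigl(\sqrt{3}(n_1+n_2),\sqrt{3}n_2\bigr),
\end{equation*}
where $I(a,b)$ denotes the iterated integral on the right-hand side of Lemma \ref{SignLemma}. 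For $\Phi_2$, I would instead use $\ell_1=\sqrt{3}(n_1+n_2)$, $\ell_2=\sqrt{3}n_2$ with $\kappa=0$, so that Lemma \ref{SignLemma} directly gives $\sgn(n_1+n_2)\sgn(n_2)q^{Q_B}=I(\sqrt{3}(n_1+n_2),\sqrt{3}n_2)+I(\sqrt{3}n_2,\sqrt{3}(n_1+n_2))$ with vanishing arctan term.

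Next I would insert the polynomial weights into these iterated integrals. For the $f$-piece, I would change summation variables to $(u,v)=(n_1/2,(n_1+2n_2)/2)$; splitting the sum by the parity of the integer part of $n_1\in\IZ+\tfrac13$ converts $\sum_{\bm n}(-1)^{n_1-1/3}$ into the antisymmetric combination $\sum_{u\in\IZ+1/6,v\in\IZ+1/3}-\sum_{u\in\IZ+1/3,v\in\IZ+1/6}$, which is exactly the structure of $f_0,f_1$ built from $\vth_{3,1}$ and $\vth_{3,2}$. Inserting $(n_1+2n_2)^2=4v^2$ produces $\vth_{3,r}^{[3]}$ through the $v^3$ factor (giving $24 f_1$), while the $-E_2/18$ correction reproduces the $\vth_{3,r}^{[1]}\vth_{3,r'}^{[1]}$ products (giving $-\tfrac{E_2}{3}f_0$). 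Multiplied by the factor $2$ from Step~1, this matches $\tfrac{2}{3}(72 f_1-E_2 f_0)$.

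For the $g$-piece, I would use the factorization $(-1)^{n_1-1/3}=(-1)^{N_1-1/2}(-1)^{n_2-1/6}$ with $N_1:=n_1+n_2$, so that summation over the sublattice $\IZ^2+(\tfrac13,\tfrac16)$ becomes an independent product sum over $N_1\in\IZ+\tfrac12$ and $n_2\in\IZ+\tfrac16$, matching the lattices of $\vth_{3/2,0}$ and (after an $n\mapsto -n$ reflection) $\vth_{3/2,1}$. Expanding $(n_1+2n_2)^2=N_1^2+2N_1n_2+n_2^2$ and inserting each monomial into $I(\sqrt{3}n_2,\sqrt{3}N_1)-I(\sqrt{3}N_1,\sqrt{3}n_2)$ produces antisymmetric combinations of $\vth_{3/2,r}^{[k]}$ products; the key cancellation is that the cross term $2N_1 n_2$ contribution vanishes because $\vth_{3/2,0}^{[2]}(\tau)=\sum_{n\in\IZ+1/2}(-1)^{n-1/2}n^2 q^{3n^2/2}=0$ by the symmetry $n\mapsto-n$ of the symmetric lattice $\IZ+\tfrac12$. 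The residual $n_2^2+N_1^2$ pieces assemble into $3\bigl(g_1(w_1,w_2)-g_1(w_2,w_1)\bigr)$ from $\Phi_1$, while the analogous computation for $\Phi_2$ with weight $N_1^2-n_2^2$ gives $3\bigl(g_1(w_1,w_2)+g_1(w_2,w_1)\bigr)$; adding them cancels the "swapped" $g_1(w_2,w_1)$ and produces $6g_1(\bm w)$. The $-E_2/18$ term contributes $-\tfrac{E_2}{6}g_0(\bm w)$, giving $\tfrac{1}{6}(36 g_1-E_2g_0)$.

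The main obstacle I expect is the careful bookkeeping in two places: (i) matching the change of variables $(u,v)=(n_1/2,(n_1+2n_2)/2)$ and the parity split against the lattices $\IZ+\tfrac16,\IZ+\tfrac13$ of $\vth_{3,1},\vth_{3,2}$, including tracking signs under the reflection $n\mapsto -n$ used to convert between $\IZ+\tfrac{r}{6}$ and $\IZ+\tfrac{6-r}{6}$; and (ii) verifying that the seemingly asymmetric $g_1(w_2,w_1)$ contributions from $\Phi_1$ and $\Phi_2$ cancel exactly. Once these are handled, the identification with $f_0,f_1,g_0,g_1$ is immediate.
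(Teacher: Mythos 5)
Your proposal is correct and coincides with what the paper actually does: its ``proof'' of Lemma \ref{lem:F1int} is just the sentence ``A direct calculation, using Lemma \ref{SignLemma} then gives,'' and your write-up is exactly that calculation carried out in detail, via the two orthogonal splittings $Q_B(\bm n)=\tfrac34\bigl(n_1^2+(n_1+2n_2)^2\bigr)=\tfrac32\bigl((n_1+n_2)^2+n_2^2\bigr)$ with $\kappa=\mp1$ (arctan terms cancelling pairwise) and $\kappa=0$. I verified the key points — the termwise combination $2I\bigl(\sqrt{3/2}\,n_1,\sqrt{3/2}(n_1+2n_2)\bigr)+I\bigl(\sqrt3 n_2,\sqrt3(n_1+n_2)\bigr)-I\bigl(\sqrt3(n_1+n_2),\sqrt3 n_2\bigr)$, the parity split onto the cosets $\IZ+\tfrac16$ and $\IZ+\tfrac13$, the vanishing of the cross term through $\vth_{3/2,0}^{[2]}=0$, the cancellation of the swapped $g_1(w_2,w_1)$ pieces between $\Phi_1$ and $\Phi_2$, and the constants $\tfrac23(72f_1-E_2f_0)$ and $\tfrac16(36g_1-E_2g_0)$ — and all check out.
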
	
Using integration by parts, while noting that $\vth_{m,r}^{[3]} (\t) = \frac{1}{2\pi im} \frac{\del}{\del \t} \vth_{m,r}^{[1]} (\t)$
and $f_0(w_1,w_1) = g_0 (w_1,w_1) =0$, we obtain the following:
\begin{prop}\label{prop_phi_integral_form}
We have
\begin{align*}
\Phi(\tau) = 
\frac{1}{\pi} \int_\tau^{\tau +i\infty} \rint{\tau}{w_1} 
\frac{\lp 4f_0(\bm w)+g_0(\bm w)\rp \lp 1-\frac{\pi i}6(w_2-\tau)E_2(\tau)\rp}
{\sqrt{i(w_1-\tau)} (i(w_2-\tau))^{\frac 32}} dw_2  dw_1 .
\end{align*}
\end{prop}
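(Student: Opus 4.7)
My plan is to derive the proposition directly from Lemma \ref{lem:F1int} by expressing the functions $f_1$ and $g_1$ as $w_2$-derivatives of $f_0$ and $g_0$, respectively, and then performing integration by parts in $w_2$. Using the identity $\vth_{m,r}^{[3]}(\t) = \frac{1}{2\pi i m}\frac{\del}{\del \t}\vth_{m,r}^{[1]}(\t)$, I would first establish that
\begin{equation*}
\frac{\del f_0}{\del w_2}(\bm w) = 6\pi i\, f_1(\bm w), \qquad \frac{\del g_0}{\del w_2}(\bm w) = 3\pi i\, g_1(\bm w),
\end{equation*}
so that $72 f_1 = \frac{12}{\pi i}\del_{w_2} f_0$ and $36 g_1 = \frac{12}{\pi i}\del_{w_2} g_0$. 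Combining the two integrals in Lemma \ref{lem:F1int} into a single integral, the numerator of the inner integrand becomes
\begin{equation*}
\frac{2}{\pi i}\,\frac{\del}{\del w_2}\bigl(4f_0(\bm w) + g_0(\bm w)\bigr) \;-\; \frac{E_2(\t)}{6}\bigl(4f_0(\bm w) + g_0(\bm w)\bigr).
\end{equation*}

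Next, I would handle the two pieces separately. For the derivative piece, I would integrate by parts in $w_2$ on the truncated interval $[\zz,w_1]$ with $\zz$ near $\t$. The boundary term at $w_2 = w_1$ vanishes because $f_0(w_1,w_1) = g_0(w_1,w_1) = 0$, while the $\zz$-boundary term
\begin{equation*}
-\,\frac{(4f_0+g_0)(w_1,\zz)}{\sqrt{i(\zz-\t)}}
\end{equation*}
is exactly the counterterm built into the definition of $\rint{\t}{w_1}$ (up to the factor $2i$ coming from differentiating $(i(w_2-\t))^{-1/2}$). Taking $\zz\to\t$ and using $(f_0+\tfrac{1}{4}g_0)(w_1,\zz)-(f_0+\tfrac{1}{4}g_0)(w_1,\t) = O(\zz-\t)$ to kill the residual difference, I obtain
\begin{equation*}
\int_\t^{w_1} \frac{\del_{w_2}(4f_0+g_0)(\bm w)}{\sqrt{i(w_2-\t)}}\,dw_2 \;=\; \frac{i}{2}\,\rint{\t}{w_1}\frac{(4f_0+g_0)(\bm w)}{(i(w_2-\t))^{\frac 32}}\,dw_2.
\end{equation*}
For the $E_2$-piece, I would simply rewrite $1/\sqrt{i(w_2-\t)} = i(w_2-\t)/(i(w_2-\t))^{3/2}$, so that the integrand has an extra factor of $w_2-\t$ and no regularization is needed.

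Combining these two contributions under a single $\rint{\t}{w_1}$, the inner integral becomes
\begin{equation*}
\frac{1}{\pi}\rint{\t}{w_1}\frac{(4f_0+g_0)(\bm w)\bigl(1-\frac{\pi i}{6}(w_2-\t)E_2(\t)\bigr)}{(i(w_2-\t))^{\frac 32}}\,dw_2,
\end{equation*}
and inserting the outer factor $1/\sqrt{i(w_1-\t)}$ with the outer integral in $w_1$ gives exactly the claimed expression. The only delicate step is the careful bookkeeping between the $\zz$-boundary term produced by integration by parts and the regularization prescription; the rest is algebraic rearrangement.
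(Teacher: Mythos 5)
Your proof is correct and follows exactly the paper's route: starting from Lemma \ref{lem:F1int}, using $\vth_{m,r}^{[3]}(\t)=\frac{1}{2\pi i m}\frac{\del}{\del \t}\vth_{m,r}^{[1]}(\t)$ to write $f_1,g_1$ as $w_2$-derivatives, and integrating by parts with the boundary terms killed by $f_0(w_1,w_1)=g_0(w_1,w_1)=0$ and absorbed into the regularization $\rint{\t}{w_1}$. Your constants check out ($72f_1=\frac{12}{\pi i}\del_{w_2}f_0$, $36g_1=\frac{12}{\pi i}\del_{w_2}g_0$, combined numerator $\frac{2}{\pi i}\del_{w_2}(4f_0+g_0)-\frac{E_2}{6}(4f_0+g_0)$), and your explicit bookkeeping of the $\zz$-boundary term against the counterterm in the definition of the regularized integral is precisely the detail the paper leaves implicit.
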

In parallel with Section 5.2, we define the completion of $\Phi$ as
\begin{align*}
\widehat{\Phi}(\t,w):= 
\frac{1}{\pi} \int_\t^w \rint{\t}{w_1} 
\frac{\lp 4f_0(\bm w)+g_0(\bm w)\rp \lp 1-\frac{\pi i}6(w_2-\t)E_2(\t)\rp}
{\sqrt{i(w_1-\t)} (i(w_2-\t))^{\frac32}} dw_2  dw_1 .
\end{align*}
 The following proposition shows the transformation law of $\wh\Phi$.  
\begin{prop}
\label{prop:Phi_modular_properties}
For $M = \left(\begin{smallmatrix}
a & b\\ c & d
\end{smallmatrix}\right)\in \operatorname{SL}_2(\Z)$, we have
\begin{equation*}
\wh{\Phi}  \lp \frac{a \t + b}{c \t + d}, \frac{a w + b}{c w + d} \rp
=
\nu_{\eta} (M)^{10} (c\t +d)^3  \, \wh{\Phi}(\t,w).
\end{equation*}
\end{prop}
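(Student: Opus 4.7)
The approach mirrors that of Proposition \ref{prop:Psi_modular_properties}. Since $\SL_2(\IZ)$ is generated by $T:\t\mapsto\t+1$ and $S:\t\mapsto-1/\t$, it suffices to prove the two identities
\begin{equation*}
\wh\Phi(\t+1,w+1)=\nu_\eta(T)^{10}\wh\Phi(\t,w),\qquad \wh\Phi\lp-\tfrac1\t,-\tfrac1w\rp=\nu_\eta(S)^{10}\t^3\wh\Phi(\t,w).
\end{equation*}

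The translation case is direct: the shift $(\t,w)\mapsto(\t+1,w+1)$ leaves $w_j-\t$, $E_2(\t)$, the principal branches of the square roots, and the measure invariant, so only the phases of $f_0$ and $g_0$ matter. Using Lemma \ref{lem:theta_onedim_modular_transf}, the factors $\vth_{3,1}^{[1]}$ and $\vth_{3,2}^{[1]}$ pick up $e^{\pi i/6}$ and $e^{2\pi i/3}$ under $\t\mapsto\t+1$, so that $f_0(\bm w+(1,1))=e^{5\pi i/6}f_0(\bm w)$; an analogous computation with the half-integer case of Lemma \ref{lem:theta_onedim_modular_transf} shows $g_0(\bm w+(1,1))=e^{5\pi i/6}g_0(\bm w)$. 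Since $\nu_\eta(T)^{10}=e^{5\pi i/6}$, this establishes the translation law.

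For inversion, following Proposition \ref{prop:Psi_modular_properties} I would first rewrite $\wh\Phi(\t,w)$ in an alternative form analogous to \eqref{eq:Psihat_alternative_form} that exposes the regularizing boundary term, then perform the substitutions $w_j\mapsto-1/w_j$, $\t\mapsto-1/\t$, $\zz\mapsto-1/\zz$. Since these substitutions map the hyperbolic geodesic from $-1/\t$ to $-1/w$ onto the geodesic from $\t$ to $w$, the sign indicators $\chi_{w_j,\t}$ and $\chi_{\zz,\t}$ tracking the principal branches all coincide with $\chi_{w,\t}$ along the path, exactly as in the proof of Proposition \ref{prop:Psi_modular_properties}. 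The transformation of the integrand then rests on three ingredients: firstly a bilinear computation using the inversion formulas for $\vth_{3,r}^{[1]}$ and $\vth_{3/2,r}^{[1]}$ from Lemma \ref{lem:theta_onedim_modular_transf} (noting that $\vth_{3,0}^{[1]}$ and $\vth_{3,3}^{[1]}$ vanish) which shows that the antisymmetric combinations satisfy
\begin{equation*}
f_0\lp-\tfrac1{w_1},-\tfrac1{w_2}\rp=w_1 w_2\sqrt{-iw_1}\sqrt{-iw_2}\,f_0(\bm w),
\end{equation*}
and analogously for $g_0$; secondly the elementary identities $i(W_j-T)=i(w_j-\t)/(w_j\t)$ and $dW_j=dw_j/w_j^2$; and thirdly, using $E_2(-1/\t)=\t^2 E_2(\t)+6\t/(\pi i)$, the key cancellation
\begin{equation*}
1-\frac{\pi i}{6}(W_2-T)E_2(T)=\frac{\t}{w_2}\lp 1-\frac{\pi i}{6}(w_2-\t)E_2(\t)\rp,
\end{equation*}
which absorbs the quasi-modular anomaly of $E_2$ into the linear factor in $(w_2-\t)$. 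Power counting in $w_1,w_2,\t$ then gives $w_1^{2}w_2^{2}\t^3$, the $w_j$-powers are absorbed by the Jacobian $dw_1dw_2/(w_1^2w_2^2)$, and the phases from the $\sqrt{-iw_j}$ combine to $-i=\nu_\eta(S)^{10}$, leaving the desired weight-$3$ factor $\t^3$.

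The main obstacle is the third ingredient above: the precise matching of the $E_2$-anomaly against the linear-in-$(w_2-\t)$ correction. This is precisely the reason the integrand of $\wh\Phi$ has the particular shape dictated by Proposition \ref{prop_phi_integral_form}, rather than containing a naive product of $\vth^{[1]}$'s and $\vth^{[3]}$'s; the $\DDD_{1/2}$-structure inherited from the modular completion viewpoint forces exactly this combination. A secondary technical obstacle is the careful handling of principal branches of the half-integral powers along the hyperbolic geodesic, which is dealt with by the $\chi$-factor bookkeeping introduced in the proof of Proposition \ref{prop:Psi_modular_properties}.
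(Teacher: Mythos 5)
Your proposal is correct and takes essentially the same route as the paper, which likewise reduces to the generators and deduces both transformation laws from $f_0(\bm w+(1,1))=e^{\frac{5\pi i}{6}}f_0(\bm w)$, $f_0\left(-\frac{1}{w_1},-\frac{1}{w_2}\right)=-iw_1^{\frac32}w_2^{\frac32}f_0(\bm w)$ (equivalent to your $w_1w_2\sqrt{-iw_1}\sqrt{-iw_2}$ form), the analogous identities for $g_0$, and the geodesic $\chi$-bookkeeping of Proposition \ref{prop:Psi_modular_properties}. Your explicit verification of the anomaly cancellation $1-\frac{\pi i}{6}\left(-\frac{1}{w_2}+\frac{1}{\tau}\right)E_2\left(-\frac{1}{\tau}\right)=\frac{\tau}{w_2}\left(1-\frac{\pi i}{6}(w_2-\tau)E_2(\tau)\right)$ is correct and supplies precisely the step that the paper leaves implicit in its appeal to the earlier proof, where no $E_2$-factor was present.
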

\begin{proof}
It is enough to verify the claim for translation and inversion, in which case it reads
\begin{align*}
\wh{\Phi} (\t+1, w+1)=	e^{\frac{5\pi i}{6}}\wh{\Phi} (\t, w),\qquad 
\wh{\Phi}\left(-\frac{1}{\t},-\frac{1}{w}\right)=-i\t^3 \wh{\Phi} (\t, w).
\end{align*}
As in Proposition \ref{prop:Psi_modular_properties}, these transformations follow from the following modular transformations for $f_0$ and $g_0$:
\begin{align*}
&f_0(\bm w+(1,1))= e^{\frac{5\pi i}{6}} f_0(\bm{w}), 
\hspace{1cm}
f_0\left(-\frac{1}{w_1},-\frac{1}{w_2}\right)=-i w_1^\frac32 w_2^\frac32f_0(\bm{w}),\\
& g_0(\bm w+(1,1))= e^{\frac{5\pi i}{6}} g_0(\bm{w}), \hspace{1cm}g_0\left(-\frac{1}{w_1},-\frac{1}{w_2}\right)=-i w_1^\frac32 w_2^\frac32g_0(\bm{w}). \qedhere
\end{align*}
\end{proof}

\section{Higher rank false theta functions from Schur indices and $\hat{Z}$-invariants}\label{sec:Schur_Zhat}

In this section, we study further examples of rank two false theta functions coming from Schur's indices \cite{BNi,Creutzig0} and $\hat{Z}$-invariants \cite{Gukov}.

\subsection{False theta functions from Schur indices}

A remarkable correspondence (or duality) between four-dimensional $\mathcal{N} = 2$ 
superconformal field theories (SCFTs) and vertex operator algebras was recently found in \cite{Beem}.
According to \cite{Beem}, the Schur index of a $\mathcal{N}=2$ SCFT agrees with the character of 
a vertex operator algebra. As mentioned above, the Schur index of the $(A_1,D_{2k+2})$ Argyres--Douglas theory is a meromorphic Jacobi form of negative index in two variables $\bm z=(z_1,z_2)$. It was demonstrated in \cite{BNi,Creutzig0} that  the index agrees with the character of a certain affine $W$-algebra. Up to some Euler factors and change of variables, this character is given by the Jacobi form $\mathbb{T}_k(\bm z;\tau)$ defined in Section \ref{sec:preliminaries} (see \cite{Creutzig0}).  
Here we analyze its Fourier coefficients. For $k \in \mathbb{N}$, letting
\begin{equation*}
\mathbb F_k(\t):=\frac{1}{2} \sum_{\bm n\in\Z^2+\left(0,\frac 12\right)} (-1)^{n_1}\sgn(n_1)\sgn\left(n_2\right)
q^{\frac{n_1^2}{2}+n_1n_2+(k+1)n_2^2},
\end{equation*}
it is not hard to prove the following result using slight adjustments of \cite[Lemma 3.5]{BKMZ}. 
\begin{prop} \label{prop:Tk_constant_term}
For $ |q| < |\zeta_1|,  |\zeta_2|, |\zeta_1 \zeta_2|<1$ and $\bm{r} \in \mathbb{Z}^2$, the $\bm{r}$-th Fourier coefficient of $\frac{\eta(\tau)^3 \eta(\frac{k+1}{2} \tau )^2}{\eta((k+1)\tau)} \mathbb T_k(\bm z;\tau)$ equals
\begin{align*}
q^{\frac{k+1}{4} (2r_2+1)}
\sum_{\bm{n} \in \mathbb{Z}^2} (-1)^{n_1} \varrho_{n_1,n_2+r_1}\varrho_{n_2+r_2,n_2} q^{\frac{n_1(n_1+1)}{2}+n_1 (n_2+r_1)+(k+1)n_2^2+(k+1)(r_2+1)n_2},
\end{align*}
where $\varrho_{m,n}:=\frac{1}{2}( {\rm sgn} ( m+ \frac{1}{2} )+{\rm sgn} ( n+ \frac{1}{2} ) )$.
In particular,
$$\mathbb F_k(\t)=\frac{\eta(\tau)^3 \eta\left(\frac{k+1}{2} \tau\right)^2}{\eta((k+1)\tau)} 
{\rm CT}_{[\bm \zeta]}(\mathbb T_k(\bm z;\tau))  .$$
\end{prop}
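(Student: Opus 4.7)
The plan is to adapt the argument in \cite{BKMZ}, Lemma~3.5, which proves the analogous statement for $V_{-\frac 32}(\mathfrak{sl}_3)$ (the case $k=1$), to arbitrary $k\in\mathbb{N}$. First, apply the Jacobi triple product \eqref{JT} to each theta factor of $\mathbb{T}_k(\bm z;\tau)$. The $(k+1)\tau$-theta products in the numerator partially cancel against the $\tau$-theta products in the denominator via the factorization $(\zeta;q)_\infty = \prod_{j=0}^k (\zeta q^j;q^{k+1})_\infty$, while the prefactor $\frac{\eta(\tau)^3 \eta(\frac{k+1}{2}\tau)^2}{\eta((k+1)\tau)}$ is set up so that the remaining $(q;q)_\infty$-type Dedekind contributions also cancel. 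What remains is a finite product of Pochhammer factors $(1-\zeta_j q^\ell)^{-1}$, the two $\vartheta(\cdot;\frac{k+1}{2}\tau)$-denominators, and the Jacobi series representations of the numerator thetas.

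Second, in the convergence region $|q|<|\zeta_1|,|\zeta_2|,|\zeta_1\zeta_2|<1$, expand each $(1-\zeta q^n)^{-1}$ as a two-sided geometric series ($\sum_{m\geq 0}\zeta^m q^{mn}$ if $n \geq 0$, and $-\sum_{m\geq 1}\zeta^{-m}q^{-mn}$ if $n<0$); for the $\frac{k+1}{2}\tau$-denominators this proceeds along the lines of Lemma~\ref{prop:double_theta_identity}. Multiplying the series out and matching the coefficient of $\zeta_1^{r_1}\zeta_2^{r_2}$ produces a summation over $\bm n \in \mathbb{Z}^2$; completing the square in the resulting Gaussian exponent yields the quadratic form and prefactor $q^{\frac{k+1}{4}(2r_2+1)}$ of the claim, while the two-sided sign split of each geometric series crystallizes into the product $\varrho_{n_1, n_2+r_1}\,\varrho_{n_2+r_2, n_2}$.

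For the ``in particular'' assertion, set $\bm r = (0,0)$ and shift $n_2 \mapsto n_2 - \tfrac12$ so that $n_2 \in \mathbb{Z}+\tfrac12$. Expanding the $\varrho$-product gives a sum of two pieces: one matches $\mathbb{F}_k(\tau)$ after noting that $\sgn(n_1+\tfrac12)=\sgn(n_1)$ for $n_1 \neq 0$, while the other reduces, after completing the square in $n_1$, to a one-dimensional factor $\sum_{m\in\mathbb{Z}}(-1)^m q^{(m+1/2)^2/2}$, which vanishes by pairing $m \leftrightarrow -m-1$; the residual $n_1=0$ boundary gives $\tfrac12 \sum_{n_2 \in \mathbb{Z}+1/2}\sgn(n_2)q^{(k+1)n_2^2}$, which is also zero by antisymmetry under $n_2 \to -n_2$. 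The main obstacle is the bookkeeping of signs and half-integer shifts arising from the interplay between the three denominator thetas, which carry two distinct moduli ($\tau$ and $\frac{k+1}{2}\tau$) and thus cannot be handled uniformly by the factorization trick; the vanishing arguments in the ``in particular'' part are likewise delicate and must be checked carefully.
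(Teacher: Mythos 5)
Your handling of the ``in particular'' part is correct and complete in outline (the shift $n_2\mapsto n_2-\frac12$, the split of $\varrho_{n_1,n_2}$, the vanishing of the $\frac12$-piece by the pairing in $n_1$, and the antisymmetric $n_1=0$ boundary term are exactly the right steps, which the paper leaves implicit). The main formula, however, is where your plan has a genuine gap. Cancelling $\vartheta(z_1;(k+1)\tau)$ against $\vartheta(z_1;\tau)$ via $(\zeta;q)_\infty=\prod_{j=0}^{k}(\zeta q^j;q^{k+1})_\infty$ leaves, for general $k$, the \emph{infinite} product $\prod_{j=1}^{k}(\zeta_1 q^j;q^{k+1})_\infty^{-1}(\zeta_1^{-1}q^j;q^{k+1})_\infty^{-1}$ --- up to elementary factors a product of $k$ inverse theta functions $\prod_{j=1}^{k}\vartheta(z_1+j\tau;(k+1)\tau)^{-1}$ --- and not ``a finite product of Pochhammer factors $(1-\zeta_1q^\ell)^{-1}$.'' Expanding these geometrically produces a partition-type sum over unboundedly many indices with only (bi)linear exponents; no completion of squares there can generate the Gaussian factor $q^{\frac{n_1(n_1+1)}{2}}$ in the statement. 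That factor, together with $\varrho_{n_1,n_2+r_1}$, is the fingerprint of the bilateral expansion $-\frac{i\zeta_1^{-1/2}\eta(\tau)^3}{\vartheta(z_1;\tau)}=\sum_{\bm{n}\in\IZ^2}\varrho_{n_1,n_2}(-1)^{n_1}q^{\frac{n_1(n_1+1)}{2}+n_1n_2}\zeta_1^{n_1}$ (Andrews's formula (2.1)), which requires keeping $\vartheta(z_1;\tau)^{-1}$ \emph{intact} rather than cancelling it; your proposal never invokes this identity or any substitute resummation, and that is the missing idea. (For $k=1$ the leftover happens to be a single inverse theta and a version of your route could be completed, but it does not extend to $k\geq 2$.)

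What the paper actually does is apply the modulus-doubling triple-product identity only to the two ratios $\vartheta(z;(k+1)\tau)/\vartheta\bigl(z;\frac{k+1}{2}\tau\bigr)$ for $z\in\{z_2,z_1+z_2\}$, each of which collapses to a \emph{single} inverse theta with shifted argument, $\propto\vartheta\bigl(z+\frac{k+1}{2}\tau;(k+1)\tau\bigr)^{-1}$; the $\eta$-prefactor in the statement is engineered precisely so that the whole function factors as Andrews's kernel in $z_1$ times $h(\bm{z};\tau)=-\frac{i\eta((k+1)\tau)^3\zeta_2^{-1}q^{-\frac{k+1}{2}}\vartheta(z_1;(k+1)\tau)}{\vartheta\left(z_2+\frac{k+1}{2}\tau;(k+1)\tau\right)\vartheta\left(z_1+z_2+\frac{k+1}{2}\tau;(k+1)\tau\right)}$, a Kronecker--Appell-type kernel whose expansion $\sum_{(n_3,n_4)\in\IZ^2}\varrho_{n_3,n_4}\,q^{(k+1)n_3n_4+\frac{k+1}{2}n_3+\frac{k+1}{2}n_4}\zeta_1^{-n_4}\zeta_2^{n_3-n_4}$ is obtained as in \cite[Lemma 3.5]{BKMZ} --- this supplies the second $\varrho$. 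Multiplying the two double series and solving the two linear constraints from extracting $\zeta_1^{r_1}\zeta_2^{r_2}$ yields the claimed double sum. Your remark that the $\frac{k+1}{2}\tau$-denominators could be treated ``along the lines of Lemma~\ref{prop:double_theta_identity}'' gestures in this direction, but as written it is incompatible with your step one: after the cancellation you would be juggling $\vartheta\bigl(z_1;\frac{k+1}{2}\tau\bigr)^{-1}$-prefactors against the residual $z_1$-poles, bookkeeping your proposal never resolves. Each $\varrho$ does arise from a two-sided geometric expansion, as you intuit, but of one of these two specific bilateral kernels at two different moduli with a $\frac{k+1}{2}\tau$ argument shift --- a structure your decomposition does not set up.
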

\begin{proof}
We first let
$$h({\bm z};\tau):=-\frac{i \eta((k+1)\tau)^3 \zeta_2^{-1} q^{-\frac{k+1}{2}} \vartheta(z_1;(k+1)\tau)}{\vartheta\left(z_2+\frac{k+1}{2} \tau; (k+1) \tau\right)\vartheta\left(z_1+z_2+\frac{k+1}{2} \tau; (k+1)  \tau\right)}.$$
As in the proof of \cite[Lemma 3.5]{BKMZ}, we obtain an expansion
\begin{equation*}
h({\bm z};\tau)=\sum_{(n_3,n_4) \in \mathbb{Z}^2} \varrho_{n_3,n_4} q^{(k+1)n_3 n_4+\frac{k+1}{2} n_3+\frac{k+1}{2} n_4} \zeta_1^{-n_4} \zeta_2^{n_3-n_4}.
\end{equation*}
This combined with the well-known formula (see \cite[formula (2.1)]{Andrews})
$$-\frac{i \zeta_1^{-\frac12} \eta(\tau)^3}{\vartheta(z_1;\tau)}=\sum_{\bm n \in \mathbb{Z}^2} \varrho_{n_1,n_2}(-1)^{n_1}q^{\frac{n_1(n_1+1)}{2}+n_1 n_2} \zeta_1^{n_1},$$
easily implies the statement. \qedhere
\end{proof}

A direct calculation using Lemma \ref{SignLemma} then shows the following. 
\begin{prop}\label{prop:Fk_iterated_integral} We have 
\begin{align*}
\frac{2 \mathbb F_k(\t) }{\sqrt{2k+1}}  
=& \int_{\tau}^{\tau+i\infty} 
\frac{\eta((2k+1)w_1)^3}{\sqrt{i(w_1-\tau)}} \int_{\tau}^{w_1}
\frac{\eta(w_2)^3}{\sqrt{i(w_2-\tau)}}dw_2dw_1 \\
&+2(k+1) \sum_{j=0}^{2k+1}(-1)^{j} 
\int_{\tau}^{\tau+ i \infty} \frac{ \vartheta^{[1]}_{k+1,j}((2k+1)w_1)}{\sqrt{i(w_1-\tau)}}\int_{\tau}^{w_1} \frac{\vartheta^{[1]}_{k+1,j+k+1}(w_2)}{\sqrt{i(w_2-\tau)}}dw_2dw_1.
\end{align*}
\end{prop}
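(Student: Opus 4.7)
The plan is to prove this as a direct application of Lemma \ref{SignLemma} to the two-dimensional false sum defining $\mathbb F_k$, after diagonalizing the quadratic form. The first step is to choose
\[
\ell_1 := \sqrt{2k+1}\, n_2, \qquad \ell_2 := n_1 + n_2, \qquad \kappa := -\tfrac{1}{\sqrt{2k+1}},
\]
which gives $\tfrac{1}{2}(\ell_1^2 + \ell_2^2) = \tfrac{n_1^2}{2} + n_1 n_2 + (k+1)n_2^2$ and $\sgn(\ell_1)\sgn(\ell_2 + \kappa \ell_1) = \sgn(n_2)\sgn(n_1)$. Since $n_2 \in \Z + \tfrac12$, we have $\ell_1 \neq 0$, so the hypothesis of Lemma \ref{SignLemma} is met for every term. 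A direct computation gives
\[
m_1 = \tfrac{\sqrt{2k+1}}{\sqrt{2k+2}}\,n_1, \qquad m_2 = \sqrt{2k+2}\, n_2 + \tfrac{n_1}{\sqrt{2k+2}}.
\]

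Next, I would apply Lemma \ref{SignLemma} to each term and sum against $\frac{1}{2}(-1)^{n_1}$. The $\arctan$-correction summed over $\Z^2 + (0,\tfrac12)$ factors (after a shift $n_1 \mapsto n_1 - n_2$) into a product in which one factor is $\sum_{m \in \Z + 1/2} (-1)^{m-1/2} q^{m^2/2}$, which vanishes since it is proportional to $\vartheta(0;\tau) = 0$ by \eqref{JT}. Thus the correction term drops out, leaving only the two iterated integrals.

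The first integrand involves $\ell_1 \ell_2\, e^{\pi i(\ell_1^2 w_1 + \ell_2^2 w_2)}$. After the same shift $n_1 \mapsto n_1 - n_2$, the $n_1$- and $n_2$-sums decouple; each becomes a sum of the form $\sum_{n \in \Z}(-1)^n (n + \tfrac12)\,e^{\pi i (n+\tfrac12)^2 \sigma}$, which by the identity for $\eta(\sigma)^3$ recorded in the preliminaries yields $\eta(w_2)^3$ and $\sqrt{2k+1}\,\eta((2k+1)w_1)^3$ respectively. This produces the first summand of the proposition.

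The second integrand involves $m_1 m_2\, e^{\pi i(m_1^2 w_1 + m_2^2 w_2)}$. Here the critical step is to split $n_1$ modulo $2(k+1)$: write $n_1 = 2(k+1)n' + j$ with $j \in \{0,1,\dots,2k+1\}$, so that $(-1)^{n_1} = (-1)^j$ (as $2(k+1)$ is even). Setting $\tilde n := n'+\tfrac{j}{2(k+1)}$ and, after a further shift $n_2 \mapsto n_2 - n'$, $\tilde m := n_2 + \tfrac{j}{2(k+1)}$, the exponents become $2\pi i (k+1)(2k+1)\tilde n^2 w_1$ and $2\pi i (k+1)\tilde m^2 w_2$ respectively, with $\tilde n \in \Z + \tfrac{j}{2(k+1)}$ and $\tilde m \in \Z + \tfrac{j+k+1}{2(k+1)}$. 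The sums then decouple into $\vartheta^{[1]}_{k+1,j}((2k+1)w_1)$ and $\vartheta^{[1]}_{k+1,j+k+1}(w_2)$, and collecting the constants $\sqrt{K-1}/K \cdot K \cdot K$ with $K = 2(k+1)$ gives exactly the coefficient $2(k+1)\sqrt{2k+1}$ (after also dividing by $\sqrt{2k+1}/2$ to match the stated normalization).

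The only real obstacle is bookkeeping: correctly tracking the lattice shifts after the two successive reindexings so that the resulting residue classes match the definitions $\vartheta^{[1]}_{k+1,j}$ and $\vartheta^{[1]}_{k+1,j+k+1}$, and verifying the cancellation of the $\arctan$ term; everything else is algebraic substitution.
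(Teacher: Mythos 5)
Your proposal is correct and follows essentially the same route as the paper, which proves this proposition precisely by the ``direct calculation using Lemma \ref{SignLemma}'' that you carry out: your substitution $(\ell_1,\ell_2,\kappa)=(\sqrt{2k+1}\,n_2,\,n_1+n_2,\,-\tfrac{1}{\sqrt{2k+1}})$ is the specialization of Proposition \ref{prop:generic_falsetht_sign_lemma} (with the roles of $n_1,n_2$ swapped), the vanishing of the $\arctan$-correction via the factor $\sum_{m\in\Z+\frac12}(-1)^{m-\frac12}q^{\frac{m^2}{2}}=0$ is exactly the mechanism described in the Remark following that proposition (here $a=b=1$, $\alpha_2=\tfrac12$), and your reindexings $n_1=2(k+1)n'+j$, $n_2\mapsto n_2-n'$ correctly produce $\eta((2k+1)w_1)^3\eta(w_2)^3$ and the $\vartheta^{[1]}_{k+1,j}((2k+1)w_1)\vartheta^{[1]}_{k+1,j+k+1}(w_2)$ terms with the stated constants. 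The only blemish is the garbled constant-collection phrase at the end (the clean bookkeeping is $m_1m_2=\sqrt{2k+1}\cdot 2(k+1)\,\tilde n\tilde m$, which together with the prefactor $\tfrac12$ from $\mathbb F_k$ and the normalization $\tfrac{2}{\sqrt{2k+1}}$ yields $2(k+1)$), but the final coefficients are right.
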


We now specialize to $k=1$. This recovers the $A_2$ false theta function entering the character formula of the $W^0(2)_{A_2}$ vertex algebra studied in \cite{AMW,BKM,BKMZ}. In this case, the right-hand side in Proposition \ref{prop:Fk_iterated_integral} simplifies and we obtain an elegant integral representation
\begin{align*} \label{Eichlerk1}
\mathbb{F}_1(\tau)=\frac{3\sqrt{3}}{4} \int_{\tau}^{\tau+ i \infty} \frac{ \eta(3w_1)^3}{\sqrt{i(w_1-\tau)}}\int_{\tau}^{w_1} \frac{\eta(w_2)^3}{\sqrt{i(w_2-\tau)}}dw_2dw_1,
\end{align*} 
as a consequence of the identity
$$\sum_{j=0}^3 (-1)^j \vartheta^{[1]}_{2,j}(3w_1)\vartheta^{[1]}_{2,j+2}(w_2)=\frac{1}{8} \eta(3 w_1)^3 \eta(w_2)^3.$$
This integral admits a modular completion $\widehat{\mathbb{F}}_1(\tau, w)$ (see Section \ref{sec:A_2_parafermion}) whose modular 
transformation properties under $\mathrm{SL}_2(\mathbb{Z})$ can be easily analyzed.

\subsection{$\hat{Z}$-invariants of $3$-manifolds from unimodular ${\tt H}$-graphs} 
The methods of this paper can also be used in analyzing the modular properties of $\hat{Z}$-invariants or homological blocks of $3$-manifolds. These are certain $q$-series with integer coefficients proposed by \cite{Gukov} as a new class of 3-manifold invariants. Remarkably, these $q$-series, which are convergent on the unit disk, are designed and expected to produce the WRT (Witten--Reshetikhin--Turaev) invariants of the relevant manifolds through the radial limits of the parameter $q$ to the roots of unity.

More concretely, we restrict our attention to plumbed $3$-manifolds whose plumbing graphs are trees. The vertices of the plumbing graph, which we label by $\{ v_j\}_{1 \leq j \leq N}$, are decorated with a set of integers $m_{jj}$ for $1 \leq j \leq N$. This data then determines the linking matrix $M=(m_{jk})_{1 \leq j,k \leq N}$ by setting the off-diagonal entries $m_{jk}$ to $-1$ if the associated vertices $v_j$ and $v_k$ are connected by an edge in the graph and by setting it to $0$ otherwise.\footnote{
Here we follow the conventions of \cite{BMM} and switch the sign of the linking matrix $M$ compared to \cite{Gukov}. 
}
We further restrict to cases in which the matrix $M$ is positive definite. 
Finally, we define the shift vector $\bm{\d} := (\d_j)_{1 \leq j \leq N}$ where $\d_j \equiv \mathrm{deg} (v_j) \pmod{2}$ and $\mathrm{deg} (v_j)$ denotes the degree of the vertex $v_j$. Then the $\hat{Z}$-invariant is defined for each equivalence class  $\bm{a} \in 2 \mathrm{coker}(M) + \bm{\d}$ by
\begin{equation*}
\hat{Z}_{\bm a}(q) := \frac{q^{\frac{-3N+{\mathrm{tr}}(M)}{4}}}{(2\pi i)^N} \, \mathrm{P.V.} 
\int_{|w_1|=1}\ldots\int_{|w_N|=1}
\prod_{j=1}^N \big(w_j - w_j^{-1}\big)^{2-\mathrm{deg}(v_j)} \,
\Theta_{-M, \bm a}(q;{\bm w}) \frac{dw_N}{w_N} \ldots \frac{dw_1}{w_1},
\end{equation*}
where 
\begin{equation*}
\Theta_{-M, \bm a}(q;{\bm w}):=\sum_{\boldsymbol{\ell} \in 2 M \mathbb{Z}^N+\bm a } q^{\frac14\boldsymbol{\ell} ^T M^{-1} \boldsymbol{\ell} } {\bm w}^{{\bm \ell}}
\end{equation*}
and the integrals are defined using the Cauchy principal value (as indicated by the notation $\mathrm{P.V.}$) and performed in counterclockwise direction. If more specifically the linking matrix is invertible (unimodular), in which case we also call the associated plumbing graph unimodular, then $\mathrm{coker} (M) = 0$ and there is only one $\hat{Z}$-invariant.

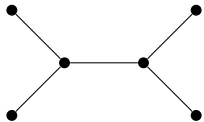
\begin{figure}[h]
\centering
	\begin{tikzpicture}[scale=0.7]
		\node[shape=circle,fill=black, scale = 0.4] (1) at (0,0) { };
		\node[shape=circle,fill=black, scale = 0.4] (2) at (-1,1) { };
		\node[shape=circle,fill=black, scale = 0.4] (3) at (-1,-1)  { };
		\node[shape=circle,fill=black, scale = 0.4] (4) at (1.5,0) { };
		\node[shape=circle,fill=black, scale = 0.4] (5) at (2.5,-1) { };
		\node[shape=circle,fill=black, scale = 0.4] (6) at (2.5,1) { };	
		\path [-] (1) edge node[left] {} (2);
		\path [-](1) edge node[left] {} (3);
		\path [-](1) edge node[left] {} (4);
		\path [-](4) edge node[left] {} (5);
		\path [-](4) edge node[left] {} (6);	
	\end{tikzpicture}
	\caption{The {\tt H}-graph} \label{fig:H_graph}
\end{figure}

\noindent
The $\hat{Z}$-invariants are conjectured to yield quantum modular forms, which for example can be verified in the case of unimodular, $3$-star plumbing graphs for which the relevant invariants can be written in terms of unary false theta functions \cite[Proposition 4.8]{GM} (see also \cite{BMM,CCFGH}) The simplest plumbing graph for which the corresponding homological block can not be written in terms of one-dimensional false theta functions is the ${\tt H}$-graph (see Figure \ref{fig:H_graph}). 
For unimodular ${\tt H}$-graph, two of the authors and Mahlburg computed the $\hat{Z}$-invariants and studied their higher depth quantum modular properties in \cite{BMM}. 
We explain how the method of Section \ref{sec:sign_lemma} can be used to analyze their modular properties. 
As demonstrated in 
\cite{BMM}, the relevant $\hat{Z}$-invariants can be expressed as a difference of two series of the form
\begin{equation*} \label{FS}
F_{\mathcal S,Q,\varepsilon}(\tau) :=\sum_{\bm \alpha\in\mathcal S}\varepsilon({\bm \alpha}) \sum_{{\bm n} \in \mathbb{N}_0^2} q^{KQ({\bm n}+{\bm \alpha})}.
\end{equation*}
Here $Q({\bm n})=:\sigma_1n_1^2+2\sigma_2n_1n_2+\sigma_3n_2^2$ with $\s_1,\s_2,\s_3 \in \IZ$ defines a positive definite quadratic form and $\mathcal S\subset \mathbb{Q}_{>0}^2$ is a finite set with the property that $(1,1)-{\bm \alpha}$, $(1-\alpha_1,\alpha_2)\in\mathcal S$ for ${\bm \alpha}\in\mathcal S$, $\varepsilon({\bm \alpha})=\varepsilon((1,1)-{\bm \alpha})=\varepsilon((1-\alpha_1,\alpha_2))$, and $K\in \mathbb{N}$ is minimal such that $\mathcal A:= K\mathcal S\subset \N^2$. For explicit formulas for $Q$ and $\mathcal{S}$ see \cite{BMM}. 
We can use the symmetry in the sum over $\bm \alpha$ to obtain that 
\begin{equation*}
F_{\mathcal S,Q,\varepsilon}(\tau) = \frac14 \sum_{{\bm \alpha} \in\mathcal S} \varepsilon({\bm \alpha}) \sum_{{\bm n} \in\Z^2+{\bm \alpha}} \sgn(n_1) \lp \sgn(n_1)+\sgn(n_2) \rp q^{KQ(\bm n)}.
\end{equation*}
The contribution from $\sgn(n_1)\sgn(n_1)=1$ yields a theta function which is a modular form. For the contribution from $\sgn(n_1)\sgn(n_2)$ we proceed as in Section \ref{sec:sign_lemma} to obtain a representation of $\hat{Z}$ in terms of
double integrals and ordinary theta functions.
\begin{prop}\label{lem:F}
We have
\begin{align*}
&F_{\mathcal S,Q,\varepsilon}(\tau) 
\\ & \ = 
\frac{K \s_3 \sqrt{D}}{2}   \sum_{\substack{\bm{\a} \in\mathcal S, \\  r \pmod{\s_3} } }
 \varepsilon({\bm \alpha})
\int_{\tau}^{\tau+i\infty} \frac{  \vth^{[1]}_{KD\s_3,2KD (\a_1+r)} (w_1)  }{ \sqrt{i (w_1-\tau)} }
\int_{\tau}^{w_1} \frac{  \vth^{[1]}_{K\s_3, 2K (\s_2 (\a_1+r) + \s_3 \a_2)} (w_2) }{\sqrt{i (w_2-\tau)}}d w_2 d w_1
\\ &\quad+ 
\frac{K \s_1 \sqrt{D}}{2}  \sum_{\substack{\bm{\a} \in\mathcal S, \\  r \pmod{\s_1} } }
 \varepsilon({\bm \alpha})
\int_{\tau}^{\tau+i\infty} \frac{ \vth^{[1]}_{KD\s_1,2KD (\a_2+r)} (w_1)  }{ \sqrt{i (w_1-\tau)} }
\int_{\tau}^{w_1} \frac{   \vth^{[1]}_{K\s_1, 2K (\s_2 (\a_2+r) + \s_1 \a_1)} (w_2) }{\sqrt{i (w_2-\tau)}}d w_2 d w_1
\\ & \qquad +
\frac{1}{4} \lp 1 - \frac{2}{\pi}  \arctan\left(\frac{\s_2}{\sqrt{D}}\right) \rp
\sum_{\substack{\bm{\a} \in\mathcal S, \\  r \pmod{\s_3} } }  \varepsilon({\bm \alpha})
\vth_{KD\s_3,2KD (\a_1+r)} (\t)  \, \vth_{K\s_3, 2K (\s_2 (\a_1+r) + \s_3 \a_2)} (\t) ,
\end{align*}
where $D:=\sigma_1\sigma_3-\sigma_2^2$.
\end{prop}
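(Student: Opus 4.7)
The plan is to split the double sum into an ordinary theta contribution and a rank-two false-theta contribution, to convert the latter into a double Eichler integral via Proposition~\ref{prop:generic_falsetht_sign_lemma}, and then to unfold the resulting bilinear theta kernels into products of unary $\vartheta^{[1]}$'s using a coset decomposition. Starting from the symmetrised rewrite
\begin{equation*}
F_{\mathcal S,Q,\varepsilon}(\tau)=\frac14\sum_{\bm\alpha\in\mathcal S}\varepsilon(\bm\alpha)\sum_{\bm n\in\IZ^2+\bm\alpha}\sgn(n_1)\bigl(\sgn(n_1)+\sgn(n_2)\bigr)q^{KQ(\bm n)}
\end{equation*}
already recorded in the excerpt, I first observe that the closure properties of $\mathcal S$ force $\alpha_1,\alpha_2\in(0,1)\cap\IQ$, so that $n_1,n_2$ never vanish and the identity $\sgn(n_1)(\sgn(n_1)+\sgn(n_2))=1+\sgn(n_1)\sgn(n_2)$ cleanly separates the two pieces.

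For the $\sgn(n_1)\sgn(n_2)$ piece I apply Proposition~\ref{prop:generic_falsetht_sign_lemma} with the \emph{doubled} parameters $(a,b,c)=(2K\sigma_1,2K\sigma_2,2K\sigma_3)$, chosen so that $\tfrac12(an_1^2+2bn_1n_2+cn_2^2)=KQ(\bm n)$. Then the discriminant $ac-b^2$ from Proposition~\ref{prop:generic_falsetht_sign_lemma} equals $4K^2D$, its square root is $2K\sqrt D$, and $\arctan(b/\sqrt{ac-b^2})=\arctan(\sigma_2/\sqrt D)$. Since $\bm\alpha\notin\IZ^2$, the correction $\delta_{\bm\alpha\in\IZ^2}$ drops out, leaving a double Eichler integral of $\Theta_1(\bm w)+\Theta_2(\bm w)$ minus the arctangent correction times $\Theta_{\bm\alpha}(\tau):=\sum_{\bm n\in\IZ^2+\bm\alpha}q^{KQ(\bm n)}$.

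Next, I decompose $\Theta_1$ using the diagonalisation $KQ(\bm n)=\tfrac{KD}{\sigma_3}n_1^2+K\sigma_3\bigl(n_2+\tfrac{\sigma_2}{\sigma_3}n_1\bigr)^2$. Writing $n_1=\alpha_1+r+\sigma_3 j$ with $j\in\IZ$ and $r$ running through residues modulo~$\sigma_3$, set $\tilde n:=j+(\alpha_1+r)/\sigma_3$ so that $n_1=\sigma_3\tilde n$, while $\tilde n_2:=n_2+\tfrac{\sigma_2}{\sigma_3}n_1\in\IZ+\alpha_2+\tfrac{\sigma_2(\alpha_1+r)}{\sigma_3}$. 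The double sum then factorises, and the exponentials become $e^{2\pi iKD\sigma_3\tilde n^2w_1}\cdot e^{2\pi iK\sigma_3\tilde n_2^2 w_2}$; it is precisely the doubled choice of $(a,b,c)$ that converts the $\pi i$ appearing in Proposition~\ref{prop:generic_falsetht_sign_lemma} into the $2\pi i$ compatible with $\vartheta^{[1]}_{m,s}$. Comparing with $\vartheta^{[1]}_{m,s}(w)=\sum_{n\in\IZ+s/(2m)}n\,e^{2\pi imn^2w}$ yields
\begin{equation*}
\Theta_1(\bm w)=\sigma_3\sum_{r\pmod{\sigma_3}}\vartheta^{[1]}_{KD\sigma_3,\,2KD(\alpha_1+r)}(w_1)\,\vartheta^{[1]}_{K\sigma_3,\,2K(\sigma_2(\alpha_1+r)+\sigma_3\alpha_2)}(w_2),
\end{equation*}
and an analogous decomposition of $\Theta_2$ (swap $\sigma_1\leftrightarrow\sigma_3$ and $\alpha_1\leftrightarrow\alpha_2$) produces a sum over $r\pmod{\sigma_1}$. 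The prefactor $\tfrac14\cdot 2K\sqrt D\cdot\sigma_3=\tfrac{K\sigma_3\sqrt D}{2}$ (together with its $\sigma_1$-analogue) reproduces the two stated Eichler coefficients.

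Finally, the ``$1$''-piece from the splitting contributes $\tfrac14\sum_{\bm\alpha}\varepsilon(\bm\alpha)\Theta_{\bm\alpha}(\tau)$; expanding $\Theta_{\bm\alpha}$ via the same coset decomposition (now with ordinary $\vartheta$'s in place of $\vartheta^{[1]}$'s, since the weight $n_1\tilde n_2$ is absent) and combining with the arctangent correction from Proposition~\ref{prop:generic_falsetht_sign_lemma} produces the coefficient $\tfrac14\bigl(1-\tfrac{2}{\pi}\arctan(\sigma_2/\sqrt D)\bigr)$ in front of the bilinear $\vartheta\,\vartheta$ product. The main technical hurdle is the parallel bookkeeping of the three coset decompositions (for $\Theta_1$, $\Theta_2$, and $\Theta_{\bm\alpha}$) and verifying that the $\vartheta^{[1]}$-indices land precisely at $(KD\sigma_3,\,2KD(\alpha_1+r))$ and $(K\sigma_3,\,2K(\sigma_2(\alpha_1+r)+\sigma_3\alpha_2))$; in particular, that the periods $2m=2KD\sigma_3$ and $2K\sigma_3$ make each sum over $r\pmod{\sigma_3}$ independent of the choice of representatives, which uses $\sigma_2\in\IZ$.
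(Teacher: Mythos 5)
Your proposal is correct and follows essentially the same route as the paper, which only sketches this argument: the symmetrized rewrite with $\sgn(n_1)\left(\sgn(n_1)+\sgn(n_2)\right)=1+\sgn(n_1)\sgn(n_2)$ (valid since the closure properties of $\mathcal S$ force $\bm{\alpha}\in(0,1)^2$, so $n_1,n_2\neq 0$ and $\delta_{\bm{\alpha}\in\IZ^2}=0$), followed by Proposition \ref{prop:generic_falsetht_sign_lemma} with the doubled parameters $(a,b,c)=(2K\sigma_1,2K\sigma_2,2K\sigma_3)$ and the coset decompositions $n_1=\alpha_1+r+\sigma_3 j$, respectively $n_2=\alpha_2+r+\sigma_1 j$, is exactly what the authors mean by ``we proceed as in Section \ref{sec:sign_lemma}.'' Your bookkeeping also checks out in detail: $\sqrt{ac-b^2}=2K\sqrt{D}$ and $\arctan\left(b/\sqrt{ac-b^2}\right)=\arctan\left(\sigma_2/\sqrt{D}\right)$, the prefactor $\tfrac14\cdot 2K\sqrt{D}\cdot\sigma_3=\tfrac{K\sigma_3\sqrt{D}}{2}$, the factorization of $\Theta_1,\Theta_2,\Theta_{\bm{\alpha}}$ into the stated unary theta products with indices $\left(KD\sigma_3,\,2KD(\alpha_1+r)\right)$ and $\left(K\sigma_3,\,2K(\sigma_2(\alpha_1+r)+\sigma_3\alpha_2)\right)$, and the invariance under $r\mapsto r+\sigma_3$ (using $\sigma_2\in\IZ$) are all as needed.
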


\section{Conclusion and future work}\label{sec:conclusion}
In this paper, modular properties of rank two false theta functions are studied following the recent developments in depth two mock modular forms. These results are then used to study characters of parafermionic vertex algebras of type $A_2$ and $B_2$. A natural question is then how our results extend to parafermions associated to other simple Lie algebras. The only remaining rank two simple Lie algebra $G_2$ is a natural setting where our approach would directly apply. A more interesting problem is the extension to higher rank Lie algebras such as $A_3$. The approach we use in Sections \ref{sec:A_2_parafermion} and \ref{sec:B_2_parafermion} to compute the constant term of meromorphic Jacobi forms would still be applicable, albeit becoming computationally more expensive as the number of roots increases. Although being straightforward, computations of the linear combinations that give the characters of parafermionic vertex algebras were a particularly strenuous part of the calculations. Therefore, it would be desirable to streamline this part of the computation ahead of the generalizations.

The modular properties for these higher rank cases, on the other hand, can in principle be studied again following the corresponding structure for mock modular forms. The details on higher depth mock modular forms are developed in \cite{ABMP, FK, Kudla, Naz, Westerholt} and we leave it as future work to form this connection. Another interesting prospect would be to understand and make predictions on these modular behaviors (weights, multiplier systems, etc.) through either physical or algebraic methods.

A slightly different direction would be studying the modular properties of Fourier coefficients of the character of $V_{k}(\text{sl}_3)$ at the boundary admissible levels $k=-3+\frac{3}{j}$, where $j \geq 2$ and $\gcd(j,3)=1$, generalizing the results for $j=2$ obtained in \cite{BKMZ} (see also Section \ref{sec:Schur_Zhat}). This problem essentially requires analyzing the Fourier coefficients of the Jacobi form (see \cite{KW0})
$$\frac{\vartheta(z_1; j \tau)\vartheta(z_2; j \tau)\vartheta(z_1+z_2; j \tau)}{\vartheta(z_1;\tau)\vartheta(z_2;\tau)\vartheta(z_1+z_2;\tau)},$$
which can be handled using the methods of this paper.

\end{document}